\DeclareMathOperator{\newot}{\raisebox{0.2ex}{\scalebox{0.75}{\ensuremath{\otimes}}}}
\newtheorem{proposition}{Proposition}[section]
\newtheorem{lemma}[proposition]{Lemma}
\newtheorem{corollary}[proposition]{Corollary}
\newtheorem{theorem}[proposition]{Theorem}
\theoremstyle{definition}
\newtheorem{definition}[proposition]{Definition}
\newtheorem{example}[proposition]{Example}
\newtheorem{examples}[proposition]{Examples}
\newtheorem{construction}[proposition]{Construction}
\theoremstyle{remark}
\newtheorem{remark}[proposition]{Remark}
\newcommand{\thlabel}[1]{\label{th:#1}}
\newcommand{\thref}[1]{Theorem~\ref{th:#1}}
\newcommand{\selabel}[1]{\label{se:#1}}
\newcommand{\seref}[1]{Section~\ref{se:#1}}
\newcommand{\lelabel}[1]{\label{le:#1}}
\newcommand{\leref}[1]{Lemma~\ref{le:#1}}
\newcommand{\prlabel}[1]{\label{pr:#1}}
\newcommand{\prref}[1]{Proposition~\ref{pr:#1}}
\newcommand{\colabel}[1]{\label{co:#1}}
\newcommand{\coref}[1]{Corollary~\ref{co:#1}}
\newcommand{\relabel}[1]{\label{re:#1}}
\newcommand{\reref}[1]{Remark~\ref{re:#1}}
\newcommand{\exlabel}[1]{\label{ex:#1}}
\newcommand{\exref}[1]{Example~\ref{ex:#1}}
\newcommand{\delabel}[1]{\label{de:#1}}
\newcommand{\deref}[1]{Definition~\ref{de:#1}}
\newcommand{\eqlabel}[1]{\label{eq:#1}}
\newcommand{\equref}[1]{(\ref{eq:#1})}
\newcommand{\conlabel}[1]{\label{con:#1}}
\newcommand{\conref}[1]{Construction~\ref{con:#1}}
\newcommand{\Vect}{{\sf Vect}}
\newcommand{\Dual}{{\sf Dual}}
\newcommand{\id}{{\sf id}\,}
\newcommand{\coev}{{\rm coev}}
\newcommand{\ev}{{\rm ev}}
\newcommand{\blambda}{{\bm \lambda}}
\newcommand{\bgamma}{{\bm \gamma}}
\newcommand{\bLambda}{{\bf \Lambda}}
\newcommand{\bUpsilon}{{\bf \Upsilon}}
\newcommand{\bL}{{\bf L}}
\newcommand{\bC}{\bf C}
\newcommand{\Hom}{{\sf Hom}}
\newcommand{\LHom}{{\sf LHom}}
\newcommand{\End}{{\sf End}}
\newcommand{\Ker}{{\sf Ker}\,}
\newcommand{\im}{{\sf Im}\,}
\newcommand{\Char}{{\sf Char}\,}
\newcommand{\Mod}{{\sf Mod}}
\newcommand{\CoMod}{{\sf CoMod}}
\newcommand{\CMnd}{{\sf CMnd}}
\newcommand{\LieMod}{{\sf LieMod}}
\newcommand{\LieMnd}{{\sf LieMnd}}
\newcommand{\LieCoMod}{{\sf LieCoMod}}
\newcommand{\LieCoMnd}{{\sf LieCoMnd}}
\newcommand{\Id}{{\sf Id}\,}
\newcommand{\Rep}{{\sf Rep}}
\newcommand{\Ind}{\ul{\sf Ind}}
\newcommand{\eq}{{\sf eq}}
\newcommand{\coeq}{{\sf coeq}}
\newcommand{\LRgd}{{\sf LRgd}}
\newcommand{\RRgd}{{\sf RRgd}}
\newcommand{\EML}{{\textrm{EML}}}
\def\Ab{\underline{\underline{\sf Ab}}}
\def\Mich{\underline{\underline{\sf Mich}}}
\def\Tak{\underline{\underline{\sf Tak}}}
\def\ot{\otimes}
\def\lie{\Lambda}
\def\liea{\varrho}
\def\liec{\delta}
\def\dual{\Diamond}
\def\id{\textrm{{\small 1}\normalsize\!\!1}}
\def\SS{{\mathbb S}}
\def\ZZ{{\mathbb Z}}
\newcommand{\Aa}{\mathcal{A}}
\newcommand{\Cc}{\mathcal{C}}
\newcommand{\Dd}{\mathcal{D}}
\newcommand{\Ll}{\mathcal{L}}
\newcommand{\Pp}{\mathcal{P}}
\newcommand{\Uu}{\mathcal{U}}
\def\text#1{{\rm {\rm #1}}}
\def\ul{\underline}
\def\dul#1{\underline{\underline{#1}}}
\def\Set{\dul{\rm Set}}
\def\lim{{\rm lim\,}}
\def\m{{\rm m}}
\def\Bialg{{\sf BiAlg}}
\def\Mnd{{\sf Mnd}}
\def\MND{{\sf MND}}
\def\CMND{{\sf CMND}}
\def\EML{{\sf EML}}
\def\H{{\sf H}}
\def\LH{{\sf LH}}
\def\YBLieAlg{{\sf YBLieAlg}}
\def\YBAlg{{\sf YBAlg}}
\def\YBLieCoAlg{{\sf YBLieCoAlg}}
\def\LieAlg{{\sf LieAlg}}
\def\LieCoAlg{{\sf LieCoAlg}}
\begin{document}
\title[Lie monads]{Lie monads and dualities}
\author{I.\ Goyvaerts} 
\address{Department of Mathematics, Faculty of Engineering, Vrije Universiteit Brussel, Pleinlaan 2, B-1050
Brussel, Belgium}
\email{igoyvaer@vub.ac.be}
\author{J.\ Vercruysse}
\address{D\'epartement de Math\'ematiques, Facult\'e des sciences, Universit\'e Libre de Bruxelles, Boulevard du
Triomphe, B-1050 Bruxelles, Belgium}
\email{jvercruy@ulb.ac.be}



\begin{abstract}
We study dualities between Lie algebras and Lie coalgebras, and their respective (co)representations. To allow a study
of dualities in an infinite-dimensional setting, we introduce the notions of Lie monads and Lie comonads, as special
cases of YB-Lie algebras and YB-Lie coalgebras in additive monoidal categories. We show that (strong) dualities between
Lie algebras and Lie coalgebras are closely related to (iso)morphisms between associated Lie monads and Lie comonads. In
the case of a duality between two Hopf algebras -in the sense of Takeuchi- we recover a duality between a Lie algebra
and a Lie coalgebra -in the sense defined in this note- by computing the primitive and the indecomposables elements, respectively.
\end{abstract}

\maketitle

\section*{Introduction and motivation}

Lie coalgebras were introduced by Michaelis \cite{Mich} as a formal dualization of Lie algebras. In particular, if $(L,\lie)$ is a finite-dimensional Lie algebra over a base field $k$, the dual vector space $C=L^*$ of $L$ can be endowed in a natural way with the structure of a Lie coalgebra, defining the ``Lie co-bracket'' as the linear map $\Upsilon=\lie^*:C=L^*\to C\ot C\cong (L\ot L)^*$, that satisfies an antisymmetry and ``co-Jacobi'' relation. Conversely, any finite-dimensional Lie coalgebra in a canonical way gives rise to a Lie algebra on its dual space. 

As for usual algebras and coalgebras, the passage to infinite-dimensional vector spaces complicates the situation. If $C$ is an infinite-dimensional Lie coalgebra, then the dual space $C^*$ will again be a Lie algebra. On the contrary, for an arbitrary Lie algebra $L$, the dual space is $L^*$ no longer a Lie coalgebra. Rather, one should restrict to the finite dual $L^\circ$, which was shown -again by Michaelis- to be a Lie coalgebra. However, as we know from general considerations, $L^\circ$ is often too small to contain enough information to recover the complete space $L$. Hence, in many situations, another duality theory will be more appropriate. 

The recent revival of monad theory among Hopf algebraists has shown us an alternative approach to attack these kind of dualities \cite{BBW}, \cite{BLV}. Indeed, given a (usual) algebra $A$ over a base field $k$, one can associate to it the monad $-\ot A$ (tensor product over $k$) on the category of $k$-vector spaces. As the endofunctor $-\ot A$ has a right dual $\Hom(A,-)$, this right dual naturally comes equipped with a comonad structure, without any finiteness condition on $A$. In fact, one makes the transition from algebras and coalgebras over the base field $k$ to algebras and coalgebras in the monoidal category of endofunctors (on the category of vector spaces).  In categorical terms, a vector space is finite-dimensional if and only if it has a (right) dual. The analogue property for endofunctors is having a (right) adjoint functor; a right adjoint functor for a functor of the form $-\ot X$ on the category of vector spaces is guaranteed by the Hom-functor $\Hom(X,-)$.

Motivated by the above, our aim is to study a duality for Lie algebras and Lie coalgebras in such a setting. However, if we want to introduce a notion of ``Lie monad'', we encounter a problem: the category of endofunctors is (strict) monoidal (in a canonical way), but not braided nor symmetric. Nevertheless, given a Lie algebra $L$ or Lie coalgebra $C$ in the category of vector spaces, one can define in a very natural way a \textit{Lie monad} structure on the associated endofunctors $-\ot L$ and $\Hom(C,-)$, by means of a local symmetry associated to the twist on the object $L$ and $C$ respectively.
This leads us to the introduction of the notion of a {\em Yang-Baxter-Lie algebra} (YB-Lie algebra for short) in an arbitrary additive monoidal category. 

The notion of a YB-Lie algebra clearly covers the concept of a Lie algebra in a symmetric monoidal category, which in turn unifies several variations of classical Lie algebras, for example Lie superalgebras. It is not our aim to go deeper into this aspect of the theory here. Instead, we refer the interested reader to the recent survey \cite{FCKhar}.

Our paper is organised as follows. After recalling some generalities on monoidal categories, we study YB-Lie algebras in \seref{LieAlgLieMod}. We introduce the category of Lie modules over a YB-Lie algebra and show -in case this YB-Lie algebra is just a Lie algebra in a symmetric monoidal category- that this category is equivalent to the category of representations of the Lie algebra. Furthermore, we study several functors and adjunctions associated to Lie modules.

In \seref{LieCoalgLieComod} we briefly review the dual situation of YB-Lie coalgebras and Lie comodules and provide some examples. \seref{LieMonadsComonads} is devoted to the particular case of Lie monads and Lie comonads. More precisely, we show the bijective correspondence between YB-Lie algebras in an additive monoidal category and Lie monads of the form $-\ot L$ (see \prref{LieAlgvsLiemonad}) as well as the bijective correspondence between Lie modules of a YB-Lie algebra and the (Lie version of the) Eilenberg-Moore category of the associated Lie monad.

In \seref{DualLie} we start our study of dualities. We introduce the notion of a duality between a YB-Lie algebra $L$ and YB-Lie coalgebra $C$ in a closed monoidal category. \prref{dualityliealgebras} shows the close correspondence between dualities for the pair $(L,C)$ and morphisms between associated Lie monads $-\ot L$ and $\Hom(C,-)$, which also induces a functor between the corresponding (co)module categories. Furthermore, {\em strong} dualities are in correspondence with the fact that the associated Lie monad morphism is an isomorphism (\prref{charstrongMich}), and in this situation the (co)module categories are equivalent.

It is well known that the primitive elements of a Hopf algebra form a Lie algebra. Similarly, the indecomposables of a Hopf algebra form a Lie coalgebra. Now, given a braided Hopf algebra, whose Yang-Baxter operator is involutive, we show in \seref{DualLieHopf} that the primitive elements form a YB-Lie algebra in our sense, respectively the indecomposables form a YB-Lie coalgebra. Moreover, given two Hopf algebras that are in duality in the sense of Takeuchi, the associated YB-Lie algebra and YB-Lie coalgebra are in duality in our sense. Finally, we show that these dualities are in correspondence with module and comodule categories (see \thref{TakMichModules}).

\section{Preliminaries}

\subsection*{Monoidal categories, braidings and symmetries} Throughout the paper we will work in a monoidal category $\Cc=(\Cc,\ot,I,a,l,r)$  with associativity constraint $a:\ot\circ(\ot\times\id_{\Cc})\to\ot\circ(\id_{\Cc}\times \ot)$ and with left- and right unit constraints resp. $l$ and $r$ ($\id_{\Cc}$ denotes the identity functor on $\Cc$). Often, if the context allows us, we will suppress the associativity and unit constraints. This will not harm the generality of our considerations, by Mac Lane's coherence theorem. In particular, all our results are applicable in situations where associativity or unit constraints are not trivial, and we will give explicit examples of these situations relevant in our setting below. Often we consider $\Cc$ moreover to be symmetric, and denote the symmetry by $c_{-,-}$.

\subsection*{Additivity}
Throughout, $\Cc$ will be supposed to be an additive category and, in case it exhibits also a monoidal structure, will be such that the tensor product is additive in each variable. In other words, $(f+g)\ot h=f\ot h+ g\ot h$ whenever $f,g,h$ are morphisms of $\Cc$ with $f$ and $g$ parallel.
For any two object $X,Y$ in $\Cc$, we denote the 
Hom-set from $X$ to $Y$ (which is supposed to be an abelian group) as $\Hom_\Cc(X,Y)$ or shortly by $\Hom(X,Y)$ if there
can be no confusion about the category $\Cc$. The identity morphism on $X$ is denoted by $1_X$ or $X$ for short. For any
functor $F:\Cc\to \Dd$, we denote $\Id_{F}$ the natural transformation defined by $\Id_{FX}=1_{FX}$. Although we avoid
this for simplicity, most of the theory developed in this paper, can be easily extended to the setting of ($k$-linear)
enriched categories.

\subsection*{Closedness}
Recall that a monoidal category is called {\em left closed} if any endofunctor of the form $-\ot X$ has a right adjoint. We will denote this right adjoint by $\H(X,-)$. In this situation, for any three objects $X, Y, Z$ in $\Cc$, there is an isomorphism
\begin{equation}\eqlabel{pi}
\pi^X_{Y,Z}:\Hom(Y\ot X,Z)\cong \Hom(Y,\H(X,Z)).
\end{equation}
The unit and counit of the adjunction $(-\ot X,\H(X,-))$ are denoted by
\begin{eqnarray}
\eta^X_Y: Y\to \H(X,Y\ot X);\quad
\epsilon^X_Y: \H(X,Y)\ot X\to Y.
\end{eqnarray}
One can easily observe that for a fixed object $Y$ in $\Cc$, one also obtains a contravariant functor $\H(-,Y):\Cc\to \Cc$ sending $X$ to $\H(X,Y)$. The functoriality comes from the fact that for any morphism $f:X\to X'$, one can construct
$$\H(X,\epsilon^{X'}_Y) \circ \H(X,\H(X',Y)\ot f) \circ \eta^{X}_{\H(X',Y)}: \H(X',Y)\to \H(X,Y)$$
Based on this observation, one easily obtains that $\eta^X_Y$, $\epsilon^X_Y$ and $\pi^X_{Y,Z}$ are also natural in the argument $X$.

Similarly, a monoidal category is called {\em right closed} if any endofunctor $X\ot -$ has a right adjoint, that we will denote in such a situation by $\H'(X,-)$. A monoidal category is called closed if it is both left and right closed. A braided monoidal category is closed if it is left closed or if it is right closed.

The following lemma shows that the adjunction \equref{pi} can in fact be lifted to an enriched adjunction, considering $\Cc$ as a self-enriched category. We refer to \cite[page 14]{Kelly} e.g. for a proof of this result.

\begin{lemma}\lelabel{liftingadjoint}
Let $\Cc$ be a (left) closed monoidal category, and use notation as above. Then there also exist the following natural isomorphisms in $\Cc$: 
$\Pi^X_{Y,Z}:\H(Y\ot X,Z)\cong\H(Y,\H(X,Z))$.
\end{lemma}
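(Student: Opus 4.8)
The plan is to deduce the internal (object-level) isomorphism from the already-established hom-set adjunction \equref{pi} via Yoneda's lemma. Since a natural isomorphism between the representable functors $\Hom(-,\H(Y\ot X,Z))$ and $\Hom(-,\H(Y,\H(X,Z)))$ on $\Cc^{\mathrm{op}}$ corresponds to a unique isomorphism between the representing objects $\H(Y\ot X,Z)$ and $\H(Y,\H(X,Z))$ in $\Cc$, it suffices to produce such a natural bijection of functors $\Cc^{\mathrm{op}}\to\Set$, and then to upgrade its naturality to the parameters $Y$, $Z$ and (contravariantly) $X$.

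First I would evaluate both sides on an arbitrary object $W$ of $\Cc$ by iterating the defining adjunction. On the one hand, $\pi^{Y\ot X}_{W,Z}$ gives
$$\Hom(W,\H(Y\ot X,Z))\cong\Hom(W\ot(Y\ot X),Z).$$
On the other hand, applying first $\pi^{Y}_{W,\H(X,Z)}$ and then $\pi^{X}_{W\ot Y,Z}$ yields
$$\Hom(W,\H(Y,\H(X,Z)))\cong\Hom(W\ot Y,\H(X,Z))\cong\Hom((W\ot Y)\ot X,Z).$$
Next I would splice these two chains using the associativity constraint: the isomorphism $a_{W,Y,X}\colon(W\ot Y)\ot X\to W\ot(Y\ot X)$ induces, by precomposition, a bijection $\Hom(W\ot(Y\ot X),Z)\cong\Hom((W\ot Y)\ot X,Z)$. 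Composing all of the displayed isomorphisms produces a bijection $\Hom(W,\H(Y\ot X,Z))\cong\Hom(W,\H(Y,\H(X,Z)))$ which is natural in $W$, each constituent (the $\pi$'s in their first argument, and precomposition with a fixed morphism) being so. Yoneda then delivers a unique morphism $\Pi^X_{Y,Z}$ in $\Cc$ realising this bijection as $\Hom(W,\Pi^X_{Y,Z})$, and it is an isomorphism because the bijection is.

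Finally, naturality of $\Pi^X_{Y,Z}$ in each of the three variables follows by transporting, through the Yoneda correspondence, the corresponding naturality of the bijections $\pi$ (which were shown to be natural in $X$, as well as in the tensor slot and in $Z$) together with the naturality of the associator $a$; the directions in $Y$ and $X$ are contravariant, as they pass through $\H(-,-)$ in its first argument. The main point to watch is coherence: one must check that the single associator $a_{W,Y,X}$ is exactly what reconciles the two bracketings and that no further associativity isomorphisms intervene. This is guaranteed by Mac Lane's coherence theorem, so under the convention of suppressing $a$ the whole argument collapses to the displayed chain of adjunction isomorphisms. I expect this coherence bookkeeping, and the naturality verification in the contravariant variable $X$, to be the only genuinely delicate points; the remainder is a formal composition of the hom-set adjunctions already in hand.
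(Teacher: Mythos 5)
Your argument is correct: composing the two hom-set adjunctions with precomposition by the associator gives a bijection $\Hom(W,\H(Y\ot X,Z))\cong\Hom(W,\H(Y,\H(X,Z)))$ natural in $W$, and Yoneda then yields the internal isomorphism; this is the standard proof, and it is essentially what the paper invokes by citing Kelly rather than arguing in-text. Note moreover that chasing $1_{\H(Y\ot X,Z)}$ (resp.\ $1_{\H(Y,\H(X,Z))}$) through your chain of bijections recovers exactly the explicit formulas \equref{Piformule} and \equref{Piinvformule} recorded after the lemma, so the two presentations agree on the nose.
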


Explicitly, one can compute $\Pi$ and $\Pi^{-1}$ in terms of $\eta$ and $\epsilon$, by means of the following formulas (in the strict monoidal setting)
\begin{eqnarray}
\Pi^X_{Y,Z}&=& \H(Y,\H(X,\epsilon^{Y\ot X}_Z))\circ \H(Y,\eta^X_{\H(Y\ot X,Z)\ot Y})\circ \eta^Y_{\H(Y\ot X,Z)} \eqlabel{Piformule}\\
(\Pi^{(-1)})^{X}_{Y,Z}&=&
\H(Y\ot X,\epsilon^X_{Z})\circ \H(Y\ot X,\epsilon^Y_{\H(X,Z)}\ot X)\circ \eta^{Y\ot X}_{\H(Y,\H(X,Z))} 
\eqlabel{Piinvformule}
\end{eqnarray}

\subsection*{Rigidity}
An object $X$ in a monoidal category is called {\em left rigid} if there exists an object ${^*X}$
together with morphisms $\coev:I\to X\ot {^*X}$ and $\ev:{^*X}\ot X\to I$ such that 
$$(X\ot\ev)\circ a^{-1}\circ (\coev\ot X)=X, \quad (\ev\ot {^*X})\circ a\circ ({^*X}\ot\coev)= {^*X}$$
It is easily verified that if $X$ is left rigid, then the object ${^*X}$ is unique up to isomorphism. In this
situation, we call ${^*X}$ the {\em left dual} of $X$ and $({^*X},X,\ev,\coev)$ a {\em duality} (or an {\em adjoint
pair}) in $\Cc$.

A {\em right rigid} object $X$ is defined symmetrically and we denote the right dual of $X$ by $X^*$. Remark that if $X$
is left rigid with left dual ${^*X}$, then ${^*X}$ is right rigid with right dual $({^*X})^*=X$.
A monoidal category is said to be {\em left rigid} (resp. right rigid, resp. rigid) if every object is left (resp.
right, resp. both left and right) rigid. Another name for a rigid monoidal category is an {\em autonomous (monoidal)
category}. If $\Cc$ is braided, then it is right rigid if and only if it is left rigid. If a category is (left, right)
rigid, then it is (left, right) closed and $\H(X,-)\simeq-\ot {^*X}$ (resp $\H'(X,-)\simeq X^*\ot -$).

\subsection*{Generators}

Recall that an object $G\in \Cc$ is called a generator if and only if the functor $\Hom_\Cc(G,-):\Cc\to \Set$ is fully faithful. If the category $\Cc$ has coproducts, this is furthermore equivalent with the fact that for any object $X\in \Cc$ there is a canonical epimorphism $f_X:H=\coprod_{f:G\to X} G \to X$, where the coproduct takes over a number of copies of $G$. Therefore, we find a fork 
\begin{equation}\eqlabel{coequalizergenerator}\xymatrix{\coprod_{^{(g,h):G\to H,\ st}_{f_X\circ g=f_X\circ h}} G \ar@<.5ex>[rr]^-{g_X} \ar@<-.5ex>[rr]_-{h_X} &&\coprod_{f:G\to X} G \ar[rr]^{f_X} &&  X}\end{equation}
In general this diagram is not a coequalizer, but $G$ is called a {\em regular} generator if \equref{coequalizergenerator} is a coequalizer for every $X\in \Cc$, see e.g. \cite[page 81]{Kelly}.

\section{YB-Lie algebras and Lie modules}\selabel{LieAlgLieMod}

\subsection{YB-Lie algebras in additive monoidal categories}

Recall that in a braided monoidal category, the $n$th braid group acts canonically on the $n$th tensor power of any
object. In a symmetric monoidal category, this action induces an action of the $n$th permutation group. As the notion
of a (classical) Lie algebra makes use of cyclic permutations of its elements (in order to fomulate the Jacobi
identity),  the development of a theory of Lie algebras in a braided setting is a lot more
involved than in the symmetric setting and leads to different possible treatments (see e.g.\ \cite{Par} and
\cite{Majid}). In this paper, we omit non-symmetric braidings, rather we allow a symmetry on an object to be a
``local'' gadget.

\begin{definition}
Let $L$ be an object in an additive monoidal category $\Cc$ and $c:L\ot L\to L\ot L$ a morphism satisfying the
following conditions:
\begin{eqnarray}
c\circ c&=&L\ot L; \eqlabel{zelfinvers}\\
&&\hspace{-3cm}
a_{L,L,L}\circ(c\ot L)\circ a_{L,L,L}^{-1}\circ(L\ot c)\circ a_{L,L,L}\circ(c\ot L)\eqlabel{YB} \\
&=&(L\ot c)\circ a_{L,L,L}\circ (c\ot L)\circ a_{L,L,L}^{-1}\circ(L\ot c)\circ a_{L,L,L} \nonumber
\end{eqnarray}
Condition \equref{YB} is exactly the Yang-Baxter equation and \equref{zelfinvers} means that $c$ is involutive. Hence we call a morphism $c$ satisfying the conditions \equref{zelfinvers}-\equref{YB} a {\em symmetric Yang-Baxter operator for} $L$.
\end{definition}

\begin{example}\exlabel{symmetric}
If $\Cc$ is a symmetric monoidal category, with symmetry $c_{-,-}$, then $c_{L,L}$ is a symmetric Yang-Baxter operator for $L$. 
\end{example}

Given an object $L$ in $\Cc$, together with a symmetric Yang-Baxter operator $c$ as above, we can construct the following morphisms in $\Cc$ (compare to \cite[section 5]{CG} for more details in case of \exref{symmetric}):
\begin{eqnarray*}
t=t_c:=
a_{L,L,L}\circ (c\ot L)\circ a^{-1}_{L,L,L}\circ (L\ot c); &&
w=w_c:= (L\ot c)\circ a_{L,L,L}\circ (c\ot L)\circ a^{-1}_{L,L,L}.
\end{eqnarray*}
As in the case of a symmetric monoidal category, the existence of a symmetric Yang-Baxter operator for $L$ induces a
canonical action of the $3$th permuation group on $L\ot (L\ot L)$,
$s:\SS_3\to\End(L\ot (L\ot L))$. In terms of this action, we have that $w=s(1,2,3)$ and $t=s(3,2,1)$, where we
represented the elements of $\SS_3$ as cycles. The following proprties are immediate.


\begin{lemma}\lelabel{b1b2}
With notation as above, the following identities hold,
\begin{enumerate}[(i)]
\item $t\circ w=w\circ t=L$
\item $t\circ t=w$;
\item $t\circ t\circ t=L$.
\end{enumerate}
\end{lemma}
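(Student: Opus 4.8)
The plan is to reduce the statement to the three defining relations of the symmetric group $S_3$. First I would pass to the strict setting via Mac Lane's coherence theorem (the convention already adopted in the paper), so that every associativity constraint becomes an identity. Writing $\sigma_1 := c\ot L$ and $\sigma_2 := L\ot c$, the defining expressions collapse to $t=\sigma_1\circ\sigma_2$ and $w=\sigma_2\circ\sigma_1$, condition \equref{zelfinvers} yields $\sigma_1\circ\sigma_1=\sigma_2\circ\sigma_2=L\ot L\ot L$, and the Yang--Baxter equation \equref{YB} becomes the braid relation $\sigma_1\circ\sigma_2\circ\sigma_1=\sigma_2\circ\sigma_1\circ\sigma_2$. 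Thus $\sigma_1,\sigma_2$ satisfy exactly the Coxeter relations of the two simple transpositions in $S_3$.

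For part (i) I would then compute directly, using only the braid relation and involutivity of $\sigma_2$:
\begin{eqnarray*}
t\circ t &=& \sigma_1\circ\sigma_2\circ\sigma_1\circ\sigma_2 \\
&=& (\sigma_1\circ\sigma_2\circ\sigma_1)\circ\sigma_2 \\
&=& (\sigma_2\circ\sigma_1\circ\sigma_2)\circ\sigma_2 \\
&=& \sigma_2\circ\sigma_1\circ(\sigma_2\circ\sigma_2) \\
&=& \sigma_2\circ\sigma_1 = w,
\end{eqnarray*}
where the third equality is \equref{YB} and the fifth uses $\sigma_2\circ\sigma_2=L\ot L\ot L$. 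Part (ii) is then immediate from (i) together with the identity $w\circ t=L$ recorded just before the lemma: $t\circ t\circ t=(t\circ t)\circ t=w\circ t=L$. Equivalently, conceptually, $t=\sigma_1\sigma_2$ is a three-cycle in $S_3$, so $t^2=t^{-1}=w$ and $t^3=L\ot L\ot L$.

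The only genuine subtlety is the bookkeeping of the associativity isomorphisms. Working strictly eliminates it entirely, and I expect this constraint-tracking to be the main (indeed the only) obstacle, since the algebraic content is elementary. If one prefers to avoid invoking coherence, the same three-step manipulation goes through verbatim once the associators $a_{L,L,L}$ and $a^{-1}_{L,L,L}$ are reinserted in the positions dictated by the definitions of $t$ and $w$: the conjugating associators in \equref{YB} are precisely arranged so that they cancel against those appearing in the two occurrences of $t$, leaving the clean braid identity used above.
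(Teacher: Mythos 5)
Your proof is correct and follows essentially the same route as the paper: part (i) is exactly the Yang--Baxter relation \equref{YB} rearranged using the involutivity of $c$ (and the invertibility of $a$ in the non-strict case), and part (ii) follows from (i) together with $t\circ w=w\circ t=L$. The $S_3$/braid-group framing is just a pleasant repackaging of the same computation.
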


 \begin{remark} 
Consider again the situation of \exref{symmetric}. Then we can take the symmetry $c_{L,L}$ on any object $L$ in the
symmetric monoidal category $\Cc$. We can construct the morphisms $t_{c_{L,L}}=t_L$ and $w_{c_{L,L}}=w_L$ upon which the
lemma above applies. However, for general braided monoidal categories this result is no longer valid, as one can see
from the following counterexample:\\
Let $\Vect^{\ZZ_{2}}(k)$ denote the category whose objects are $\ZZ_{2}$-graded vector spaces over a field $k$ (\Char$(k)\neq 2$), and whose morphisms consist of $k$-linear maps that preserve the grading.
Let $U,V,W$ be objects in $\Vect^{\ZZ_{2}}(k)$. 
Now consider the following associativity constraint $a$ for $\ot_{k}$ (unadorned tensorproducts $\ot$ are to be taken over $k$):
\begin{eqnarray*}
&&\hspace{-2cm}
a_{U,V,W}: (U\ot V)\ot W\to U\ot(V\ot W); \\
&&\hspace{-2cm}
~(x\ot y)\ot z\mapsto (-1)^{|x||y||z|}x\ot(y\ot z),
\end{eqnarray*}
where $|x|$ denotes the degree of a homogeneous element $x$ of an object in $\Vect^{\ZZ_{2}}(k)$.\\
Letting $l$, resp. $r$ be the trivial left, resp. right unit constraints with respect to $k$, we obtain a (non-strict) monoidal category $(\Vect^{\ZZ_{2}}(k),\ot_{k},k,a,l,r)$ which we shall denote by $\Cc$.
Moreover,
$\Cc$ is a braided monoidal category if and only if $k$ contains a primitive fourth root of unity $i$ (see \cite{BCT} for example). A braiding $c$ can then be defined as follows; for any couple of objects $(V,W)$ in $\Cc$,
$$c_{V,W}: V\ot W\to W\ot V; v\ot w\mapsto (i)^{|v||w|}w\ot v.$$
One now checks easily that \equref{zelfinvers}, and hence \leref{b1b2} does not hold for $c=c_{V,V}$ for any object $V$ in $\Cc$, with $c$ defined as above.
\end{remark}

\begin{definition}
Let $\Cc$ be an an additive, monoidal category, but not necessarily symmetric. A {\em YB-Lie algebra in} $\Cc$ is a triple $(L,\lambda,\lie)$, denoted $L$ for short if there is no confusion possible, where $L$ is an object of $\Cc$, $\lambda$ is a symmetric Yang-Baxter operator for $L$ in $\Cc$, and $\lie:\ L\ot L\to L$ is a morphism (which we call -- despite our notation -- a {\em Lie bracket}) in $\Cc$ that satisfies
\begin{eqnarray}\eqlabel{AS}
\lie \circ (1_{L\ot L} + \lambda)&=&0_{L\ot L,L},\\
\eqlabel{Jac}
\lie\circ (1_{L}\ot \lie)\circ (1_{L\ot(L\ot L)}+ t_\lambda+ w_\lambda)&=&0_{L\ot (L\ot L),L}.
\end{eqnarray}
and is such that the following diagram commutes:

\begin{equation}\eqlabel{compatibility}
{\xymatrix{(L\ot L)\ot L \ar[d]_{\lie \ot L} \ar[rr]^-{t_\lambda\circ a_{L,L,L}} && L\ot (L\ot L)   \ar[d]^-{L\ot \lie} \\
L\ot L \ar[rr]_-{\lambda} && L\ot L }
}
\end{equation}
A morphism of YB-Lie algebras $\phi:(L,\lambda,\lie)\to(L',\lambda',\lie')$ is a morphism $\phi:L\to L'$ that respects the Lie-bracket, and the Yang-Baxter operator i.e.\ 
\begin{eqnarray}
\lie'\circ(\phi\ot\phi)&=&\phi\circ\lie; \eqlabel{respectLie} \\ 
\lambda'\circ(\phi\ot\phi)&=&(\phi\ot\phi)\circ \lambda \eqlabel{respectYB}
\end{eqnarray} 
The category of YB-Lie algebras in $\Cc$ and morphisms of YB-Lie algebras between them is denoted by $\YBLieAlg(\Cc)$.

Suppose now that $\Cc$ is an additive, symmetric monoidal category. A Lie algebra in $\Cc$ is a YB-Lie algebra in $\Cc$ of the form $(L,c_{L,L},\lie)$, where $c_{L,L}$ is the symmetry of the category $\Cc$. 

The full subcategory of $\YBLieAlg(\Cc)$ whose objects are Lie algebras in $\Cc$, is denoted by $\LieAlg(\Cc)$. Remark that a morphism between two Lie algebras automatically satisfies condition \equref{respectYB}, by the naturality of the symmetry $c_{-,-}$.
\end{definition}

We call \equref{Jac} the (right) $\lambda$-Jacobi identity for $L$. As for usual Lie algebras, the definition of a YB-Lie algebra is left-right symmetric, i.e.\ any YB-Lie algebra also satisfies a left $\lambda$-Jacobi identity; this result was proven in \cite{GV}.

\begin{examples}\exlabel{LieAlg} 
The notion of a YB-Lie algebra covers many known classes of (generalized) Lie algebras, such as: classical Lie algebras over an arbitrary commutative ring $R$ (working in the symmetric monoidal category $\Mod(R)$), Lie superalgebras (working in the monoidal category of $\ZZ_2$-graded vector spaces, considered with the non-trivial symmetry) and certain classes of Hom-Lie algebras (applying the Hom-construction on an additive symmetric monoidal category, see \cite{CG} for more details about this non-strict example). For more details about the examples above, we refer to \cite{GV}. 
It also covers the theory of Lie monads (working in the non-symmetric monoidal category of additive endofunctors on an additive category), which will be treated in more detail in \seref{liemonad}. As another example, the YB-Lie algebra of primitive elements of a braided bialgebra is constructed in \seref{primitive}.
\\Finally, one observes that if $(L,\lambda,\lie)$ is a Lie algebra, then $(L,\lambda,\lie\circ \lambda)$ is again a YB-Lie algebra, which we call the opposite Lie algebra of $L$. 
\end{examples}

The following class of new examples might be useful in the sequel, it allows for obvious generalizations.

\begin{example}\exlabel{newYBLie}
Let $\Cc$ be a symmetric monoidal category, with symmetry $c_{-,-}$ and
let $(L,\lie)$ be a Lie algebra in $\Cc$, $A$ an object in $\Cc$ and $\mu:A\ot A\to A$ any morphism. Then $(A\ot L,\lambda,\{-,-\})$ is a YB-Lie algebra where
$$\lambda:\xymatrix{A\ot L\ot A\ot L \ar[r]^-{A\ot c_{L,A}\ot L} & A\ot A\ot L\ot L \ar[r]^-{A\ot A\ot c_{L,L}} & A\ot A\ot L\ot L  \ar[r]^-{A\ot c_{A,L}\ot L} & A\ot L\ot A\ot L}$$
and
$$\{-,-\}:\xymatrix{
A\ot L\ot A\ot L \ar[r]^-{A\ot c_{L,A}\ot A} & A\ot A\ot L\ot L \ar[r]^-{\mu\ot \lie} & A\ot L
}$$
E.g.\ taking $\Cc$ to be the category of $k$-vector spaces and letting $L$ be $k$-Lie algebra and $A$ a $k$-algebra, we find that $A\ot L$ is a YB-Lie algebra with
$$\lambda(a\ot x\ot b\ot y)=a\ot y\ot b\ot x;\qquad \{a\ot x, b\ot y\}=ab\ot [x,y],$$
for all $a,b\in A$ and $x,y\in L$.
Remark that this YB-Lie algebra $A\ot L$ is even a YB-Lie algebra in the category of $A$-bimodules.
\end{example}

\subsection{Lie modules}
Let $(\Cc,\ot,I,a,l,r)$ be an additive, monoidal category and 
$(L,\lambda,\lie)$ a YB-Lie algebra in $\Cc$.
\begin{definition}
A {\em right Lie module over $L$} 
is an object $X$ in $\Cc$, together with a morphism $\liea:X\ot L\to X$ satisfying
\begin{equation}
\eqlabel{Liemodule}
\liea\circ\bigg((\liea\ot L)\circ a^{-1}_{X,L,L}-(\liea\ot L)\circ a^{-1}_{X,L,L}\circ 
(X\ot \lambda)-X\ot\lie\bigg)=0_{X\ot(L\ot L),X}
.
\end{equation}

Left Lie modules can be introduced symmetrically. 
\end{definition}

\begin{example}\exlabel{Liemodule}
Let $(L,\lambda,\lie)$ be a YB-Lie algebra in $\Cc$. Then $L$ is a Lie module over itself (with $\liea=\lie$). One easily gets \equref{Liemodule} from the Jacobi identity and antisymmetry.
\end{example}

\begin{definition}
Let $(X,\liea_X)$ and $(Y,\liea_Y)$ be two right Lie modules in $\Cc$. A morphism of Lie modules is a morphism $f:X\to Y$ in $\Cc$ s.t. $f\circ \liea_X=\liea_Y\circ (f\ot L)$.
The set of all morphisms of Lie modules from $X$ to $Y$ is denoted by $\LHom(X,Y)$.
Then, with these definitions, Lie modules in a monoidal category $\Cc$ together with their morphisms form a category, which we will denote by $\LieMod(L)$ (whether we consider left or right modules is supposed to be clear from the context).
\end{definition}

\begin{remark}
If $L$ is a Lie algebra in a symmetric monoidal category, then the category of left Lie modules over $L$ and the category of right Lie modules over $L$ are isomorphic. Over a general YB-Lie algebra however, this seems no longer to be true. Consider for example the YB-Lie algebra from \exref{newYBLie}. If $M$ is an associative right $A$-module and $(N,\liea)$ is a Lie-module, then the tensor product $M\ot N$ has a natural structure of right $A\ot L$-Lie module
, but there is no canonical right $A\ot L$-Lie module structure on $M\ot N$.
\end{remark}

\begin{definition}
Let $\Cc$ be an additive, symmetric closed monoidal category.
A {\em representation of a Lie algebra} $(L,\lie_{L})$ is a pair $(X,\phi_{X})$, where $X$ is an object of $\Cc$ and $\phi_{X}:(L,\lie_L)\to (\H(X,X),\lie_{\H(X,X)})$ is a morphism of Lie algebras, where $\lie_{\H(X,X)}$ is the commutator Lie bracket, defined as follows:
$\lie_{\H(X,X)}=m_{\H(X,X)}-m_{\H(X,X)}\circ c_{\H(X,X),\H(X,X)}$, with $$m_{\H(X,X)}=\pi^X_{\H(X,X)\ot\H(X,X),X}(\epsilon^{X}_{X}\circ(\H(X,X)\ot \epsilon^{X}_{X})).$$
Morphisms are defined as follows: Let $(X,\phi_{X})$ and $(Y,\phi_{Y})$ be two representations of $(L,\lie_L)$ and let $f:X\to Y$ a morphism in $\Cc$. Then $f$ is a morphism of representations if the following diagram commutes
$$
\xymatrix{
    L\ot X \ar[r]^-{\phi_{X}\ot X } \ar[d]_{L\ot f}  & \H(X,X)\ot X \ar[r]^-{\epsilon^{X}_{X}} & X \ar[d]_{f} \\
    L\ot Y \ar[r]_-{\phi_{Y}\ot Y} & \H(Y,Y)\ot Y \ar[r]_-{\epsilon^{Y}_{Y}} & Y }
$$
The category of representations of $L$ is denoted by $\Rep(L)$. 
\end{definition}

\begin{proposition}
Let $L$ be a Lie algebra in a symmetric closed monoidal category.
There is an equivalence of categories between the category of (left) Lie modules $\LieMod(L)$ and the category of representations $\Rep(L)$.
\end{proposition}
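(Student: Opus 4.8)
The plan is to construct an explicit functor $F:\LieMod(L)\to\Rep(L)$ and an explicit quasi-inverse $G:\Rep(L)\to\LieMod(L)$, and to check they are mutually inverse up to natural isomorphism. The essential content is the adjunction isomorphism $\pi^X_{Y,Z}$ of \equref{pi}, which lets me pass back and forth between an action $\liea:L\ot X\to X$ (a left Lie-module structure) and a morphism $\phi_X:L\to\H(X,X)$. Concretely, given a left Lie module $(X,\liea)$, I would set $\phi_X=\pi^X_{L,X}(\liea):L\to\H(X,X)$; conversely, given a representation $(X,\phi_X)$, I would recover the action as $\liea=\epsilon^X_X\circ(\phi_X\ot X):L\ot X\to X$, i.e.\ $\liea=(\pi^X_{L,X})^{-1}(\phi_X)$. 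Since $\pi^X_{L,X}$ is a bijection, the object-level assignment is immediately a bijection; the two things that require genuine work are (a) showing that $\phi_X$ so defined is actually a \emph{morphism of Lie algebras} $L\to\H(X,X)$ (equivalently, that $\liea$ satisfies the module condition \equref{Liemodule}), and (b) showing the assignment is functorial, i.e.\ that a morphism of Lie modules is carried to a morphism of representations and vice versa.

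First I would record the precise relationship between the commutator bracket $\lie_{\H(X,X)}=m_{\H(X,X)}-m_{\H(X,X)}\circ c_{\H(X,X),\H(X,X)}$ and the action. The key computational lemma is that, under the adjunction $\pi$, the composite $m_{\H(X,X)}\circ(\phi_X\ot\phi_X)$ transported back through $\epsilon^X_X$ corresponds to the iterated action $\liea\circ(L\ot\liea)$ (suitably reassociated), because $m_{\H(X,X)}$ is defined precisely as the image under $\pi$ of the composite of two evaluations. Granting this, the condition that $\phi_X$ respects the brackets, namely $\lie_{\H(X,X)}\circ(\phi_X\ot\phi_X)=\phi_X\circ\lie$, becomes—after applying the inverse adjunction $\epsilon^X_X\circ(-\ot X)$—exactly the Lie-module identity \equref{Liemodule}, where the term $m_{\H(X,X)}\circ c$ produces the summand $(\liea\ot L)\circ a^{-1}\circ(X\ot\lambda)$ via the symmetry $c$ being $\lambda$-compatible on the image of $\phi_X$ (here I use \equref{respectYB} together with the fact that we are in a symmetric category, as in the remark following the definition of $\LieAlg(\Cc)$). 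So (a) reduces to a diagram chase translating \equref{Liemodule} across the bijection $\pi$, and the two defining conditions are literally the same identity read on the two sides of the adjunction.

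For functoriality (b), a morphism of Lie modules $f:X\to Y$ is by definition a morphism making $\liea_Y\circ(f\ot L)=f\circ\liea_X$ commute (in the left-handed convention), and a morphism of representations is defined by exactly the diagram $\epsilon^Y_Y\circ(\phi_Y\ot Y)\circ(L\ot f)=f\circ\epsilon^X_X\circ(\phi_X\ot X)$. Since $\liea=\epsilon^X_X\circ(\phi_X\ot X)$, these two commuting squares are manifestly the same condition; so $F$ and $G$ send morphisms to morphisms with no extra hypothesis, and they clearly preserve identities and composition. Finally, to assemble the equivalence I would note that on objects $G\circ F$ and $F\circ G$ are the identity because $(\pi^X_{L,X})^{-1}\circ\pi^X_{L,X}=\id$ and vice versa, and on morphisms both $F$ and $G$ act as the identity on the underlying morphism $f$; hence the two functors are strictly (not merely naturally) inverse, giving the desired equivalence—indeed an isomorphism—of categories.

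The step I expect to be the main obstacle is (a), specifically the bookkeeping needed to show that the two summands of the commutator bracket $\lie_{\H(X,X)}$ correspond, under $\pi$, to the two ``action'' summands in \equref{Liemodule}, and in particular that the braiding term $m_{\H(X,X)}\circ c_{\H(X,X),\H(X,X)}$ matches the term $(\liea\ot L)\circ a^{-1}_{X,L,L}\circ(X\ot\lambda)$. This hinges on carefully tracking how the symmetry $c$ of $\Cc$ interacts with $\epsilon^X_X$ under the adjunction and on the naturality of $\pi$ in all three arguments (including the argument $X$, as noted in the preliminaries). The associativity constraints and unit constraints clutter the computation, but by Mac Lane coherence one may work strictly, and in the strict setting the identification of the two diagrams is a direct, if lengthy, verification rather than a conceptual difficulty.
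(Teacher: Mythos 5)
Your proposal matches the paper's proof essentially verbatim: the paper also defines $F(X,\liea_X)=(X,\pi^X_{L,X}(\liea_X))$ and $G(X,\phi_X)=(X,(\pi^X_{L,X})^{-1}(\phi_X))$, both acting as the identity on morphisms, checks well-definedness via naturality of $\pi$, $\epsilon^X_{-}$ and the symmetry $c$ (your step (a)), and concludes that the two functors are strictly mutually inverse. Your additional remarks on how the two summands of the commutator bracket transport under $\pi$ to the two action summands of the module identity are exactly the verification the paper leaves implicit, so the proposal is correct and takes the same route.
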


\begin{proof}
We define a functor $F:\LieMod(L)\to\Rep(L)$ as follows: 
$$F(X,\liea_{X})=(X,\phi_{X}=\pi^X_{L,X}(\liea_{X})),$$
for any (left) Lie module $(X,\liea_{X})$, and $F$ acts as the identity functor on morphisms. 
By naturality of $\epsilon^{X}_{-}$, we have $\epsilon^{X}_{X}(\pi^{X}_{L,X}(\liea_{X})\ot X)=\liea_{X}$. 
Applying this together with the naturality of $\pi$, one can check that $F$ is well-defined.

Conversely, consider the functor $G:\Rep(L)\to\LieMod(L)$ defined for any object $(X,\phi_{X})$ of $\Rep(L)$ as
$$G(X,\phi_{X})=(X,\liea_{X}=(\pi^{X}_{L,X})^{-1}(\phi_{X})),$$  
and $G$ is the identity on morphisms.
To see that $G$ is well-defined, it suffices to make use of the naturality of $c$, $\epsilon^X_{-}$ and $\pi^{X}_{-,X})^{-1}$.

Finally, it is clear that $(G,F)$ is pair of adjoint functors with trivial unit and counit (i.e. identical natural transformations), hence they establish the desired equivalence of categories.
\end{proof}

\subsection{Adjoint functors for Lie modules}\selabel{adjointmodules}

The following needs no explicit proof. 

\begin{proposition}\prlabel{induction}
Let $(L,\lambda,\lie)$ be a YB-Lie algebra in an additive monoidal category $\Cc$ and $(M,\liea_M)$ a Lie module. Then for any object $X$ in $\Cc$, the pair $(X\ot M,\liea_{X\ot M }=X\ot \liea_{M})$ is a Lie module. This yields a functor 
$-\ot M:\Cc\to \LieMod(L)$.
In particular, taking $(M,\liea_M)=(L,\lie)$, we obtain a functor
$$-\ot L:\Cc\to \LieMod(L).$$
\end{proposition}
%
A natural question that arises is whether these functors have a right adjoint. To obtain this result, we need to shift our setting towards {\em closed} monoidal categories. In the remaining of this section, we wil suppose that $\Cc$ is an additive, left closed monoidal category.

The proof of the following theorem is based on the observation that the set of morphisms between $L$-Lie modules can be expressed as the following equalizer: Let $(M,\liea_M)$ and $(N,\liea_N)$ be two $L$-Lie modules, then we have the following equalizer in $\Ab$
$$\xymatrix{\LHom(M,N)\ar[rr]&& \Hom(M,N) 
\ar@<.5ex>[rrrr]^-{(-)\circ \liea_M }
\ar@<-.5ex>[rrrr]_-{\liea_N\circ ((-)\ot L) } 
&&&& \Hom(M\ot L, N) }$$
To obtain a right adjoint for the functor of \prref{induction}, we need to lift this equalizer to the category $\Cc$.

\begin{theorem}\thlabel{LieHomTensor}
Suppose that $\Cc$ possesses equalizers.
Then the functor $-\ot M:\Cc\to \LieMod(L)$ has a right adjoint $\LH(M,-)$, given by the following equalizer in $\Cc$
\begin{equation}\eqlabel{defLH}
\xymatrix{\LH(M,N)\ar[rr]&& \H(M,N) 
\ar@<.5ex>[rrrr]^-{\pi^{M\ot L}_{\H(M,N),N}\left(\epsilon^M_N\circ (1_{\H(M,N)}\ot \liea_M)\right)}
\ar@<-.5ex>[rrrr]_-{\pi^{M\ot L}_{\H(M,N),N}\left(\liea_N\circ (\epsilon^M_N\ot L)\right) } 
&&&& \H(M\ot L, N) }
\end{equation}
for any Lie module $(N,\liea_N)$.
\end{theorem}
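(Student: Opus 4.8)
The plan is to prove representability directly: I will exhibit, for every object $X\in\Cc$ and every Lie module $(N,\liea_N)$, an isomorphism $\LHom(X\ot M,N)\cong\Hom_\Cc(X,\LH(M,N))$ natural in both variables, where $X\ot M$ carries the Lie module structure $\liea_{X\ot M}=X\ot\liea_M$ of \prref{induction}. As indicated before the statement, the starting point is that $\LHom(X\ot M,N)$ is the equalizer in $\Ab$ of the two maps
$$\Hom(X\ot M,N)\rightrightarrows\Hom((X\ot M)\ot L,N),\qquad f\mapsto f\circ(X\ot\liea_M),\quad f\mapsto\liea_N\circ(f\ot L).$$
The idea is to feed this equalizer through the adjunction isomorphisms $\pi^M_{X,N}\colon\Hom(X\ot M,N)\cong\Hom(X,\H(M,N))$ and $\pi^{M\ot L}_{X,N}\colon\Hom(X\ot(M\ot L),N)\cong\Hom(X,\H(M\ot L,N))$, and to recognise the resulting diagram as $\Hom(X,-)$ applied to the fork \equref{defLH} defining $\LH(M,N)$.

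First I would write $f=\epsilon^M_N\circ(\widetilde f\ot M)$ for $\widetilde f=\pi^M_{X,N}(f)\colon X\to\H(M,N)$. The core computation is then to check that, under $\pi^{M\ot L}_{X,N}$, the two parallel maps above become post-composition with the two morphisms $\alpha,\beta\colon\H(M,N)\rightrightarrows\H(M\ot L,N)$ appearing in \equref{defLH}, namely
$$\alpha=\pi^{M\ot L}_{\H(M,N),N}\bigl(\epsilon^M_N\circ(1_{\H(M,N)}\ot\liea_M)\bigr),\qquad \beta=\pi^{M\ot L}_{\H(M,N),N}\bigl(\liea_N\circ(\epsilon^M_N\ot L)\bigr).$$
Concretely, for the first map one rewrites $f\circ(X\ot\liea_M)=\bigl(\epsilon^M_N\circ(1_{\H(M,N)}\ot\liea_M)\bigr)\circ(\widetilde f\ot(M\ot L))$ using the interchange law, and then invokes naturality of $\pi$ in its source argument (with respect to $\widetilde f$) to obtain $\pi^{M\ot L}_{X,N}(f\circ(X\ot\liea_M))=\alpha\circ\widetilde f$; the second map is treated identically and yields $\beta\circ\widetilde f$. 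I would carry this out in the strict monoidal setting, the general case following by Mac Lane coherence once the associativity constraints $a_{X,M,L}$ are reinstated.

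With this identification in hand, the natural isomorphism $\pi$ carries the equalizer presentation of $\LHom(X\ot M,N)$ onto the equalizer of $\Hom(X,\alpha),\Hom(X,\beta)\colon\Hom(X,\H(M,N))\rightrightarrows\Hom(X,\H(M\ot L,N))$. Since $\Hom_\Cc(X,-)$ is representable it preserves limits, hence equalizers, so this last equalizer is precisely $\Hom_\Cc(X,\LH(M,N))$ for $\LH(M,N)$ the equalizer \equref{defLH}, which exists by hypothesis. Finally I would verify naturality: naturality in $X$ is immediate from naturality of $\pi$ in its source argument, while functoriality of $\LH(M,-)$ and naturality in $N$ follow from checking that $\alpha$ and $\beta$ are natural in $N\in\LieMod(L)$ and then invoking the universal property of the equalizer — the naturality of $\beta$ being exactly the point where the Lie module morphism condition on a map $N\to N'$ enters. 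I expect the main obstacle to be the bookkeeping in the central computation identifying the transported maps with $\alpha$ and $\beta$: it is a chain of applications of the interchange law and of the naturality of $\pi$, $\eta$ and $\epsilon$ in all three arguments, and keeping the associativity constraints straight (or justifying their suppression) is where the care is needed.
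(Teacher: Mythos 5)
Your proposal is correct and follows essentially the same route as the paper's proof: present $\LHom(X\ot M,N)$ as an equalizer in $\Ab$, apply the limit-preserving functor $\Hom(X,-)$ to \equref{defLH}, and show via the interchange law and the naturality of $\pi$, $\eta$ and $\epsilon$ that the adjunction isomorphisms $\pi^M_{X,N}$ and $\pi^{M\ot L}_{X,N}$ intertwine the two parallel pairs, so the equalizers agree. The only cosmetic difference is that you factor $f=\epsilon^M_N\circ(\widetilde f\ot M)$ up front, whereas the paper computes $\alpha^*\circ\pi^M_{X,N}(f)$ directly using the triangle identity; these are the same calculation.
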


\begin{proof}
We have to prove that there is a natural isomorphism $\LHom(X\ot M,N)\cong \Hom(X,\LH(M,N))$ for any object $X\in \Cc$ and any $L$-Lie module $(N,\liea_N)$. 
\\Consider the following equalizer in $\Ab$:
$$\xymatrix{\LHom(X\ot M,N)\ar[rr]&& \Hom(X\ot M,N) 
\ar@<.5ex>[rrrr]^-{(-)\circ( X\ot \liea_M) }
\ar@<-.5ex>[rrrr]_-{\liea_N\circ ((-)\ot L) } 
&&&& \Hom(X\ot M\ot L, N) }$$
Recall (cf. e.g \cite[Proposition 2.9.4]{Borceux}) that a representable functor preserves all limits. Hence if we apply the representable functor $\Hom(X,-)$ on the equalizer \equref{defLH} defining $\LH(M,N)$, we obtain the following equalizer in $\Ab$:
$$\xymatrix{\Hom(X,\LH(M,N))\ar[rr]&& \Hom(X,\H(M,N)) 
\ar@<.5ex>[rrrr]^-{\left(\pi^{M\ot L}_{\H(M,N),N}\left(\epsilon^M_N\circ (1_{\H(M,N)}\ot \liea_M)\right)\right)^{*}}
\ar@<-.5ex>[rrrr]_-{\left(\pi^{M\ot L}_{\H(M,N),N} \left(\liea_N\circ (\epsilon^M_N\ot L)\right)\right)^{*}} 
&&&& \Hom(X,\H(M\ot L, N)) },$$
where $(-)^{*}$ denotes $\Hom(X,-)$. We know that $\pi^{M}_{X,N}$ and $\pi^{M\ot L}_{X,N}$ respectively provide isomorphisms between the last two objects in the above two equalizers. Or aim is now to show that these isomorphisms  induce an isomorphism between the respective equalizers. Take $f\in \Hom(X\ot M,N)$, then we find
\begin{eqnarray*}
\left(\pi^{M\ot L}_{\H(M,N),N}(\epsilon^{M}_{N}\circ (\H(M,N)\ot\liea_{M}))\right)^*\circ\pi^{M}_{X,N}(f)\\
&&\hspace{-5cm}=\pi^{M\ot L}_{X,N}\left(\epsilon^{M}_{N}\circ (\H(M,N)\ot\liea_{M})\circ(\pi^{M}_{X,N}(f)\ot M\ot L)\right)\\
&&\hspace{-5cm}=\pi^{M\ot L}_{X,N}(\epsilon^{M}_{N}\circ (\pi^{M}_{X,N}(f)\ot M)\circ(X\ot \liea_{M}))\\
&&\hspace{-5cm}=\pi^{M\ot L}_{X,N}(f\circ(X\ot\liea_{M}))
\end{eqnarray*}
where we used the naturality of $\pi^{M\ot L }_{-,N}$ in the first equality and the naturality of the tensor product in the second equality and the naturality of $\epsilon^{M}_{-}$ in combination with the fact that $\epsilon^{M}_{X\ot M}\circ({\eta^{M}_{X}}\ot M)=1_{X\ot M}$ and $\pi^M_{X,N}=\H(M,f)\circ\eta^M_Y$ in the last equality.

A similar computation shows that
$$\pi^{M\ot L}_{X,N}(\liea_{N}\circ(f\ot L))=\pi^{M\ot L}_{\H(M,N),N}(\liea_{N}\circ(\epsilon^{M}_{N}\ot L))\circ \pi^{M}_{X,N}(f).$$
Now, by  the uniqueness of the equalizer, we obtain a natural isomorphism $\LHom(X\ot M,N)\cong \Hom(X,\LH(M,N))$, which shows the adjunction between $-\ot M$ and $\LH(M,-)$.
\end{proof}

\begin{construction}[{{\bf The commutator Lie algebra}}]\conlabel{commutator}
Let $(B,\mu_B)$ be a (non-unital) associative algebra in $\Cc$. We say that $B$ is a YB-algebra if it comes equipped with an involutive Yang-Baxter operator $\lambda_B:B\ot B\to B\ot B$ that satisfies the following condition
\begin{equation}\eqlabel{assalgebra}
{\xymatrix{
    B\ot B\ot B \ar[r]^-{t_{\lambda_B}} \ar[d]_{\mu_{B}\ot B}  & B\ot B\ot B \ar[r]^-{w_{\lambda_B}} \ar[d]_{B\ot \mu_{B}} & B\ot B\ot B \ar[d]_{\mu\ot B} \\
    B\ot B \ar[r]_-{\lambda_B} & B\ot B \ar[r]_-{\lambda_B} & B\ot B }}
\end{equation}
The category of YB-algebras in $\Cc$ is denoted by $\YBAlg(\Cc)$. One can easily check that for any YB-algebra $(B,\mu_B,\lambda_B)$, the triple $(B,\lie_B,\lambda_B)$, where $\lie_{B}=\mu_{B}\circ(B\ot B-\lambda_B)$ is a YB-Lie algebra. We call this YB-Lie algebra the commutator Lie algebra of $B$, and denote it for short as $\Ll(B)$. This construction defines a functor $\Ll:\YBAlg(\Cc)\to \YBLieAlg(\Cc)$.
\end{construction}

\begin{examples}
\begin{enumerate}
\item Let $A$ be an algebra in a symmetric monoidal category with symmetry $c$, then $A$ is a YB-algebra, its YB-operator being $c_{A,A}$. The associated YB-Lie algebra is the usual commutator Lie algebra.
\item Let $A$ and $B$ be two algebras in a symmetric monoidal category $\Cc$, then $A\ot B$ is again an algebra in $\Cc$. We define a YB-operator $\lambda$ on $A\ot B$ by
$$\lambda:\xymatrix{A\ot B\ot A\ot B \ar[r]^-{A\ot c_{B,A}\ot B} & A\ot A\ot B\ot B \ar[r]^-{A\ot A\ot c_{B,B}} & A\ot A\ot B\ot B  \ar[r]^-{A\ot c_{A,B}\ot B} & A\ot B\ot A\ot B}$$
One easily verifies that $A\ot B$ is a YB-algebra. The associated YB-Lie algebra $\Ll(A\ot B) = A\ot \Ll(B)$ -of the form of the YB-Lie algebra constructed in \exref{newYBLie}- is made out of the tensor product of the algebra $A$ and the usual commutator Lie algebra $\Ll(B)$ of $B$.
\item Let $A$ be an associative algebra. It was shown in \cite{Nich} that the following map defines a symmetric YB-operator on $A$
$$\lambda(a\ot b)= ab\ot 1 - a\ot b + 1\ot a,$$
for all $a,b\in A$.
In fact, endowed with this YB-operator the associative algebra $A$ becomes a YB-algebra. One easily checks that the associated YB-Lie algebra has a trivial bracket, that is $[a,b]=0$ for all $a,b\in A$, i.e. $\Ll(A)$ is a ``commutative'' YB-Lie algebra.
\end{enumerate}
\end{examples}

Let us fix a YB-Algebra $B$ and 
denote the category of (right) $B$-modules $(M,\rho_{M})$, $(\rho_{M}$ being the right action of $B$ on $M$) by $\Mod(B)$. 
Then we can define a functor 
$$\ul{\sf Ind}(-):\xymatrix{\Mod(B)\ar[rr] &&\LieMod(\Ll(B))}$$
by putting $\ul{\sf Ind}(M,\rho_{M})=(M,\liea_{M}=\rho_{M})$. The fact that $\ul{\sf Ind}(-)$ is well-defined follows from the (mixed) associativity of the (right) action of $B$ onto any (right) $B$-module.
Remark that because of this, a YB-algebra $B$ always possesses two $\Ll(B)$-Lie module structures: one by its commutator Lie-bracket, and one by its initial (associative) multiplication.

We will search for an adjoint for the functor $\ul{\sf Ind}$. However, we will work in a more general setting. Let $(L,\lambda,\lie)$ be any YB-Lie algebra and $B$ an associative algebra. Let $(T,\liea_T)$ be a $L$-Lie module that is at the same time a left $B$-module with action $\m:B\ot T\to T$ such that $\m$ is a morphism of $L$-Lie modules, where the $L$-Lie module structure on $B\ot T$ is given by $B\ot \liea_T$, i.e.\ it is the structure induced by the functor $-\ot T$ of \prref{induction} evaluated in $B$. This means that 
\begin{equation}\eqlabel{liebimodule}
\liea_T\circ (\m\ot L)=\m\circ (B\ot \liea_T).
\end{equation}
Then for any right $B$-module $(M,\rho_M)$, we find that $(M\ot_B T, M\ot_B\liea_T)$ is an $L$-Lie module. Hence we find a well-defined functor 
$$-\ot_BT:\Mod(B)\to \LieMod(L).$$
In case we take $L=\Ll(B)$, and $(T,\liea_T)=(B,\mu_B)$ with the regular left $B$-action, then we find that this functor is exactly $\ul{\sf Ind}$.
Before stating the next theorem, we need a little lemma:
\begin{lemma}\lelabel{lemma1}
Suppose $\Cc$ to be complete and let $B$ be an associative algebra in $\Cc$. Then the forgetful functor $\Uu:\Mod(B)\to \Cc$ reflects limits.
\end{lemma}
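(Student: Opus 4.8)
The plan is to argue directly from the definition: a functor \emph{reflects limits} when any cone in its source whose image is a limit cone in the target is already a limit cone. So let $D:J\to\Mod(B)$ be a diagram and let $\kappa=(\kappa_j:(M,\rho_M)\to D_j)_{j\in J}$ be a cone over $D$ in $\Mod(B)$ such that $\Uu\kappa$ is a limit cone of $\Uu D$ in $\Cc$; my goal is to show that $\kappa$ is itself a limit cone in $\Mod(B)$. (Completeness of $\Cc$ is what guarantees that such limit cones exist in $\Cc$ in the first place, so the hypothesis is genuinely applicable; the reflection argument below uses only the single limit cone we are handed, together with the fact that $\Uu$ is faithful, module morphisms being determined by their underlying $\Cc$-morphisms.)

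Next I would produce the mediating morphism. Given any competing cone $\tau=(\tau_j:(N,\rho_N)\to D_j)_{j\in J}$ over $D$ in $\Mod(B)$, applying $\Uu$ yields a cone over $\Uu D$ in $\Cc$, and since $\Uu\kappa$ is a limit cone there is a unique $\Cc$-morphism $u:N\to M$ with $\kappa_j\circ u=\tau_j$ for every $j$. Uniqueness of $u$ as a morphism of $\Mod(B)$ is then immediate from faithfulness of $\Uu$: any $B$-linear mediating map has underlying $\Cc$-morphism equal to $u$. Thus everything reduces to checking that $u$ is $B$-linear, i.e.\ that $\rho_M\circ(u\ot B)=u\circ\rho_N$ as morphisms $N\ot B\to M$.

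The one real step is this linearity check, and the key device is that $\Uu\kappa$, being a limit cone, makes the legs $\kappa_j$ jointly monic in the strong sense that two $\Cc$-morphisms with codomain $M$ agree as soon as all their composites with the $\kappa_j$ agree (take $Z=N\ot B$ in the bijection $\Hom_\Cc(Z,M)\cong\lim_j\Hom_\Cc(Z,\Uu D_j)$). Post-composing each side with $\kappa_j$ and using that $\kappa_j$ and $\tau_j$ are $B$-linear, one computes $\kappa_j\circ\rho_M\circ(u\ot B)=\rho_{D_j}\circ(\kappa_j\ot B)\circ(u\ot B)=\rho_{D_j}\circ(\tau_j\ot B)$ and, symmetrically, $\kappa_j\circ u\circ\rho_N=\tau_j\circ\rho_N=\rho_{D_j}\circ(\tau_j\ot B)$, where I have used $\kappa_j\circ u=\tau_j$ and functoriality of $\ot$. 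Since the two composites coincide for every $j$, the maps agree, so $u$ is $B$-linear. This verifies the universal property of $\kappa$ in $\Mod(B)$ and hence that $\Uu$ reflects limits. The only subtlety to flag is precisely this last point: one must not try to cancel $\kappa_j$ or $\tau_j$ naively, but route the equality through the limit cone $\Uu\kappa$; everything else is formal.
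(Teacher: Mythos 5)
Your proof is correct, but it takes a different route from the paper's. The paper argues abstractly: it invokes completeness of $\Mod(B)$ (citing Porst), observes that $\Uu$ preserves limits because it has the left adjoint $-\ot B$, notes that $\Uu$ reflects isomorphisms, and then quotes the general fact (Borceux, Proposition 2.9.7) that a limit-preserving, isomorphism-reflecting functor between categories with limits reflects limits. You instead verify the universal property by hand: you build the mediating morphism $u$ downstairs in $\Cc$, get its uniqueness in $\Mod(B)$ from faithfulness of $\Uu$, and establish $B$-linearity of $u$ by post-composing with the jointly monic legs $\kappa_j$ of the limit cone --- correctly routing the cancellation through the bijection $\Hom_\Cc(N\ot B,M)\cong\lim_j\Hom_\Cc(N\ot B,\Uu D_j)$ rather than cancelling a single leg. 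Your argument is essentially the standard proof that forgetful functors from Eilenberg--Moore-type categories \emph{create} limits, and it is both more elementary and slightly more general: it needs neither completeness of $\Mod(B)$, nor the existence of the left adjoint, nor even completeness of $\Cc$ (which, as you note, only serves to make the hypothesis non-vacuous). What the paper's approach buys in exchange is brevity and reusability --- it reduces everything to two standard citations and a one-line observation --- at the cost of importing the completeness of $\Mod(B)$ as an external input. Both proofs are valid for the use the paper makes of the lemma.
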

\begin{proof}
As $\Cc$ is complete, we know that $\Mod(B)$ is complete as well (see e.g.\ \cite[Fact 2]{Porst}). Now, we observe that $\Uu$ preserves limits, since it has a left adjoint $-\ot B$. Finally, we also have that $\Uu$ reflects isomorphisms. The lemma now follows from e.g.\ \cite[Proposition 2.9.7]{Borceux}.
\end{proof}

\begin{theorem}\thlabel{LieHomTensor2}
Suppose that $\Cc$ is an additive, left closed monoidal category with equalizers. 
Let $(L,\lambda,\lie)$ be a YB-Lie algebra in $\Cc$, $B$ an associative algebra in $\Cc$, $(T,\liea_T)$ a $L$-Lie module that is a left $B$-module with action $\m$ such that \equref{liebimodule} holds. Then then the functor $\LH(T,-)$ from \thref{LieHomTensor} can be corestricted to obtain a right adjoint to the above defined functor
$$-\ot_BT:\Mod(B)\to\LieMod(L).$$
\end{theorem}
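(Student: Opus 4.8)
The plan is to corestrict $\LH(T,-)$ to $\Mod(B)$ and then, using the coequalizer presentation of $M\ot_B T$, to reduce the desired adjunction to the one already established in \thref{LieHomTensor}. Throughout I apply \thref{LieHomTensor} with the module $M$ there taken to be our $T$.

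First I would equip the objects $\H(T,N)$ and $\H(T\ot L,N)$ with right $B$-module structures. Since $T$ is a left $B$-module via $\m$, and $T\ot L$ is a left $B$-module via $\m\ot L$ (associativity being inherited from that of the $B$-action on $T$), the contravariance of the internal hom turns these into right $B$-modules; explicitly the action on $\H(T,N)$ is $\pi^{T}_{\H(T,N)\ot B,N}\bigl(\epsilon^{T}_{N}\circ(1_{\H(T,N)}\ot\m)\bigr)$, and similarly for $\H(T\ot L,N)$. Next I would verify that the two parallel morphisms appearing in the equalizer \equref{defLH} defining $\LH(T,N)$ are right $B$-linear. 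For the map built from $\liea_N$ this is essentially a naturality statement, since the $B$-action only interacts with the $T$-slot through $\epsilon^{T}_{N}$ while $L$ is carried along untouched. For the map built from $\liea_T$ it is precisely the commutativity of \equref{liebimodule}, i.e. the identity $\liea_T\circ(\m\ot L)=\m\circ(B\ot\liea_T)$ expressing that $\m$ is a morphism of $L$-Lie modules. Granting both, \equref{defLH} is an equalizer of a pair of $B$-linear maps between $B$-modules; invoking \leref{lemma1} (the forgetful functor $\Uu\colon\Mod(B)\to\Cc$ preserves limits, being a right adjoint, and reflects them) this equalizer lifts to $\Mod(B)$ with underlying object $\LH(T,N)$. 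This endows $\LH(T,N)$ with a canonical right $B$-module structure, and since $\LH(T,f)$ is $B$-linear for every Lie-module morphism $f$ by naturality, we obtain a corestricted functor $R:=\LH(T,-)\colon\LieMod(L)\to\Mod(B)$.

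It then remains to produce a natural isomorphism $\LHom(M\ot_B T,N)\cong\Hom_{\Mod(B)}(M,RN)$. I would use that $M\ot_B T$ is the coequalizer in $\LieMod(L)$ of $\rho_M\ot T$ and $M\ot\m$ from $M\ot B\ot T$ to $M\ot T$ (as in the construction preceding the theorem); applying the contravariant functor $\LHom(-,N)$ produces the equalizer
\[
\LHom(M\ot_B T,N)\to\LHom(M\ot T,N)\rightrightarrows\LHom(M\ot B\ot T,N).
\]
On the other hand $\Hom_{\Mod(B)}(M,RN)$ is, by definition of $B$-linearity, the equalizer of the two maps $\Hom(M,RN)\rightrightarrows\Hom(M\ot B,RN)$ given by precomposition with $\rho_M$ and by the action $\rho_{RN}$. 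The adjunction of \thref{LieHomTensor} applied to $X=M$ and to $X=M\ot B$ supplies isomorphisms $\LHom(M\ot T,N)\cong\Hom(M,\LH(T,N))$ and $\LHom(M\ot B\ot T,N)\cong\Hom(M\ot B,\LH(T,N))$, and I would check that they intertwine the two parallel pairs: compatibility with the maps coming from $\rho_M$ is immediate from naturality of the adjunction in the $\Cc$-variable, whereas compatibility with the maps coming from $\m$ holds exactly because $\rho_{RN}$ was defined from $\m$ through $\epsilon^{T}_{N}$ and $\pi$. By uniqueness of equalizers this yields the natural isomorphism, hence the adjunction $-\ot_B T\dashv R$.

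The main obstacle is this final intertwining computation, together with the verification that \equref{liebimodule} is precisely what makes the $\liea_T$-map $B$-linear. Both are of the same flavour as the naturality manipulations in the proof of \thref{LieHomTensor}, and I would carry them out by repeated use of the naturality of $\pi^{X}_{-,-}$, of $\epsilon^{X}_{-}$, and of the tensor product, reducing everything to the triangular identities of the adjunctions $(-\ot T,\LH(T,-))$ and $(-\ot B,\Uu)$.
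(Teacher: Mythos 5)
Your proposal is correct, and its first half (endowing $\H(T,N)$ and $\H(T\ot L,N)$ with right $B$-actions through the left $B$-structure on $T$, checking that the parallel pair in \equref{defLH} is $B$-linear --- the $\liea_T$-leg via \equref{liebimodule}, the $\liea_N$-leg by naturality --- and then lifting the equalizer to $\Mod(B)$ via \leref{lemma1}) matches the paper's, except that you are more explicit where the paper appeals to ``classical arguments of enriched category theory''; the paper also writes the resulting action on $\LH(T,N)$ concretely as $\LH(T,\zeta_{N}\circ(\LH(T,N)\ot \m))\circ\theta_{\LH(T,N)\ot B}$, which agrees with your induced structure. Where you genuinely diverge is the hom-set isomorphism: the paper presents $\LHom(M\ot_B T,N)$ as the equalizer of the Lie-morphism condition $\Hom(M\ot_B T,N)\rightrightarrows\Hom(M\ot_B T\ot L,N)$, applies $\Hom_B(M,-)$ to the lifted equalizer \equref{defLH}, and matches the two via the adjunction $(-\ot_B T,\H(T,-))$ between $\Mod(B)$ and $\Cc$; you instead peel off the $B$-linearity condition, presenting $\LHom(M\ot_B T,N)$ via the coequalizer $M\ot B\ot T\rightrightarrows M\ot T\to M\ot_B T$ and $\Hom_B(M,\LH(T,N))$ as an equalizer of $\Hom(M,-)\rightrightarrows\Hom(M\ot B,-)$, and match them by invoking \thref{LieHomTensor} at $X=M$ and $X=M\ot B$. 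Your route has the advantage of relying only on the already-proved adjunction $(-\ot T,\LH(T,-))$ rather than on the auxiliary adjunction $(-\ot_B T,\H(T,-))$, at the cost of two small extra verifications you should record: that $\rho_M\ot T$ and $M\ot\m$ are morphisms in $\LieMod(L)$ (the latter is again exactly \equref{liebimodule}) and that the coequalizer computed in $\Cc$ is still a coequalizer in $\LieMod(L)$, which holds because $-\ot L$ preserves colimits in a left closed category. With those points noted, your intertwining computation goes through exactly as you describe, since $\rho_{\LH(T,N)}$ is by construction the adjunct of $\zeta_N\circ(\LH(T,N)\ot\m)$.
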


\begin{proof}
Let $(M,\liea_{M})$ be an object in $\LieMod(L)$. Define $\LH(T,(M,\liea_{M}))=(\LH(T,M),\rho_{M})$, with
$$\rho_{M}=\LH(T,\zeta_{M} \circ (\LH(T,M)\ot \m))\circ \theta_{\LH(T,M)\ot B},$$
where $\theta_X:X\to \LH(T,X\ot T)$ is the unit and $\zeta_M:\LH(T,M)\ot T\to M$ the counit of the adjunction between $-\ot T$ and $\LH(T,-)$ from \thref{LieHomTensor}.
Then it follows smoothly, from naturality and the fact that $\m$ is a left $B$-action on $T$, that $\rho_M$ defines an associative and unital right $B$-action.

To prove the adjunction, we need to prove that we have an isomorphism of abelian groups 
$$\LHom(M\ot_{B}T,N)\cong \Hom_{B}(M,\LH(T,N)),$$
whenever $(N,\liea_{N})\in \LieMod(L)$ and $(M,\rho_{M})\in \Mod(B)$.
To this end, we use a similar argument as in \thref{LieHomTensor}. 
First, remark that Lie module homomorphisms from $M\ot_B T$ to $N$ can be characterized as the following equalizer in $\Ab$:
$$\xymatrix{\LHom(M\ot_{B}T,N)\ar[rr]&& \Hom(M\ot_{B}T,N) 
\ar@<.5ex>[rr]^-{(-)\circ(M\ot_{B}\liea_{T})}
\ar@<-.5ex>[rr]_-{\liea_{N}\circ((-)\ot L)} 
&& \Hom(M\ot_{B}T\ot L, N) }.$$
Next, we consider the following equalizer in $\Cc$:
\begin{equation}\eqlabel{LHequalizerinproof}
\xymatrix{\LH(T,N)\ar[rr]&& \H(T,N) 
\ar@<.5ex>[rrrr]^-{{\pi}^{T\ot L}_{\H(T,N),N}({\epsilon}^T_N\circ (id_{\H(T,N)}\ot_{B} \liea_T))}
\ar@<-.5ex>[rrrr]_-{{\pi}^{T\ot L}_{\H(T,N),N}(\liea_N\circ ({\epsilon}^T_N\ot L)) } 
&&&& \H(T\ot L, N) },
\end{equation}
where ${\pi}$ and ${\epsilon}$ denote as before the (natural) isomorphisms associated to the adjunction between $-\ot T$ and $\H(T,-)$. We know from the first part of the proof that $\LH(T,N)$ is moreover a right $B$-module. In a similar way, classical arguments of enriched category theory tell us that $\H(T,N)$ and $\H(T\ot L,N)$ are right $B$-modules (we even have an adjunction $(-\ot_B T,\H(T,-))$ between $\Mod(B)$ and $\Cc$). Hence, \equref{LHequalizerinproof} is an equalizer in $\Mod(B)$ by \leref{lemma1}.
We can thus apply the representable functor $\Hom_{B}(M,-)$ to this equalizer, to obtain the following equalizer in $\Ab$.
$$\xymatrix{\Hom_{B}(M,\LH(T,N))\ar[r]& \Hom_{B}(M,\H(T,N)) 
\ar@<.5ex>[rrr]^-{\left({\pi}^{T\ot L}_{\H(T,N),N}({\epsilon}^T_N\circ (id_{\H(T,N)}\ot_{B} \liea_T))\right)^{*}}
\ar@<-.5ex>[rrr]_-{\left({\pi}^{T\ot L}_{\H(T,N),N}(\liea_N\circ ({\epsilon}^T_N\ot L))\right)^{*} } 
&&& \Hom_{B}(M,\H(T\ot L, N)) }$$
To conclude the proof, it now suffices to observe that by the adjunction $(-\ot_BT,\H(T,-))$, we have isomorphisms $\Hom(M\ot_{B} T,N)\cong \Hom_{B}(M,\H(T,N))$ and $\Hom(M\ot_{B} T\ot L,N)\cong \Hom_{B}(M,\H(T\ot L,N))$, which implies that the above constructed equalizers in $\Ab$ are isomorphic.
\end{proof}

As a particular instance of \thref{LieHomTensor2}, we find that the functor $\ul{\sf Ind}:\Mod(B)\to \LieMod(B)$, being naturally isomorphic to $-\ot_BB$, has a right adjoint $\LH(B,-)$. 
Moreover, we obtain the following diagram of adjoint functors for any right $B$-module $(T,\rho_T)$.
\[
\xymatrix{
&& \Cc \ar@<-.5ex>[lldd]_-{-\ot T} \ar@<.5ex>[rrdd]^-{-\ot \widehat T} \\ \\
\Mod(B)\ar@<-.5ex>[rrrr]_-{\ul{\sf Ind}\simeq -\ot_B B} \ar@<-.5ex>[rruu]_-{\H_B(T,-)} &&&&\LieMod(\Ll(B)) \ar@<.5ex>[lluu]^-{\LH(\widehat T,-)} \ar@<-.5ex>[llll]_-{\LH(B,-)}
}
\]
Here we denote $\widehat{T}=\ul{\sf Ind}(T)$, the induced $L$-Lie module of $T$ and the functor $\H_B(T,-):\Mod(B)\to \Cc$ is the internal representable functor defined by the equalizer for all $(X,\rho_X)\in \Mod(B)$
$$\xymatrix{\H_B(T,X) \ar[rr] && \H(T,X) 
\ar@<.5ex>[rrrr]^-{\pi^{T\ot B}_{\H(T,X),X}\left(\epsilon^T_X\circ (1_{\H(T,X)}\ot \rho_T)\right)}
\ar@<-.5ex>[rrrr]_-{\pi^{T\ot B}_{\H(T,X),X}\left(\rho_X\circ (\epsilon^T_X\ot B)\right) }
&&&& \H(T\ot B,X) },$$
which is known to be a right adjoint for $-\ot T:\Cc\to\Mod(B)$.
Clearly $\Ind(X\ot T)\cong X\ot \widehat T$, so the outer triangle in the above diagram naturally commutes. To see that the inner diagram of functors also commutes, take any object $X$ in $\LieMod(\Ll(B))$ and $Y$ in $\Cc$. By applying the adjunctions above, we then find
\begin{eqnarray*}
\Hom(Y,\H_B(T,\LH(B,X)))&\cong& \Hom_B(Y\ot T,\LH(B,X))
\cong \LHom(\Ind(Y\ot B),X)\\
&\cong &\LHom(Y\ot \widehat T,X) \cong \Hom(Y,\LH(\widehat T,X))
\end{eqnarray*}
So by the Yoneda lemma, we find a natural isomorphism $\H_B(T,\LH(B,X))\cong \LH(\widehat T,X)$.

\section{YB-Lie coalgebras and Lie comodules}\selabel{LieCoalgLieComod}

Let $(\Cc, \ot,I)$ be an additive, monoidal category. {\em YB-Lie coalgebras in $\Cc$} are defined dually to YB-Lie algebras, i.e.\ we define the category $\YBLieCoAlg(\Cc)$ of YB-Lie coalgebras in $\Cc$ as $\YBLieCoAlg(\Cc)=\YBLieAlg(\Cc^{op})^{op}$. If $\Cc$ is moreover symmetric, we define Lie coalgebras in $\Cc$ by $\LieCoAlg(\Cc)=\LieAlg(\Cc^{op})^{op}$. Here $\Cc^{op}=(\Cc^{op},\ot^{op},I)$ denotes the opposite category of $\Cc$ and $\ot^{op}: \Cc^{op}\times\Cc^{op}\to \Cc^{op}$ the opposite tensor product functor induced in the obvious way by $\ot$. Explicitly, this leads to the following definition, which is due to Michaelis in the symmetric case (cf. \cite{Mich}).
\begin{definition}\delabel{LieCoalgebra}
A YB-Lie coalgebra in $\Cc$ is a triple $(C,\gamma,\Upsilon)$, denoted $C$ for short if no confusion can be made, consisting of an object $C$ in $\Cc$ together with a self-invertible YB-operator $\gamma:C\ot C\to C\ot C$ and a comultiplication map $\Upsilon:C\to C\ot C$ such that 
\begin{enumerate}
\item $\Upsilon+\gamma\circ \Upsilon=0_{C,C\ot C}$;
\item $(1_{C\ot (C\ot C)}+ w_\gamma + t_\gamma )\circ (1_{C}\ot \Upsilon)\circ \Upsilon=0_{C, C\ot (C\ot C)}$;
\item $(\Upsilon\ot C)\circ \gamma = a^{-1}_{C,C,C}\circ w_\gamma \circ (C\ot \Upsilon)$.
\end{enumerate}
In case that $\Cc$ is moreover symmetric, then we call a YB-Lie coalgebra of the form $(C,c_{C,C},\Upsilon)$, where $c_{C,C}$ is the symmetry of $\Cc$, just a Lie-coalgebra.

A {\em Lie comodule over $C$} is an object $X$ in $\Cc$, endowed with a morphism $\liec^X:X\to X\ot C$ satisfying
$$\bigg(a_{X,C,C}\circ(\liec^X\ot C)-(X\ot\gamma)\circ a_{X,C,C}\circ(\liec^X\ot C)-X\ot \Upsilon\bigg)\circ \liec^X=0_{X,X\ot(C\ot C)}.
$$
\end{definition}
Morphisms of Lie comodules are definied in the obvious way. The category of Lie comodules over $C$ with their morphisms will be denoted by $\LieCoMod(C)$.

All statements and theorems of the previous section have obvious duals for Lie coalgebras and Lie comodules. There is no point in repeating these explicitly. Let us just finish this section by mentioning some examples (see also \cite{Mich} for \exref{Liecoalgebra} (1),(2) and (4)) of Lie coalgebras that will be useful later on.
\begin{examples}\exlabel{Liecoalgebra}
\begin{enumerate}
\item Let $(C,\Delta_{C})$ be a coassociative coalgebra in an additive, monoidal category $\Cc$ and suppose there is a involutive Yang-Baxter operator $\gamma:C\ot C\to C\ot C$ on $C$, such that an analogeous version of the diagram \equref{assalgebra} commutes, then we can consider a YB-Lie coalgebra structure on $\Ll^c(C)=C$, defined by $\Upsilon_{\Ll^c(C)}=(C\ot C-\gamma)\circ \Delta_{C}$.
\item\label{indecomposables}
Let $H$ be a Hopf algebra in $\Vect(k)$. Let $I=\Ker(\epsilon)$, with $\epsilon$ the counit of $H$, and let us denote $Q(H)=I/I^{2}$, the so-called {\em indecomposables} of $H$. Then $Q(H)$ is a Lie coalgebra, where the cobracket comes from $\Delta_{\Ll^c(H)}$. To see that this is true, let us first check that $\Delta_{\Ll^c(H)} : I\to I\ot I $ is well-defined. Indeed, since 
$$\im(\Delta_{\Ll^c(I)})\subset \Ker(\epsilon\ot1_{H})\cap \Ker(1_{H}\ot \epsilon)=(I\ot H)\cap (H\ot I)=I\ot I,$$
where we denoted $\Delta_{\Ll^c(I)}$ the restriction of $\Delta_{\Ll^c(H)}$ to $I$.
Moreover, $\Delta_{\Ll^c(I)}$ maps $I^{2}$ into $I^{2}\ot I+I\ot I^{2}$, so the map 
$$\overline{\Delta_{\Ll^c(I)}}: I/I^{2}\to I\ot I/({I^{2}\ot I+I\ot I^{2}})\cong {I/I^{2}}\ot {I/I^{2}}$$
is well-defined and turns $Q(H)$ into a Lie coalgebra.  Let us point out that dually to the Lie algebra case, $Q(H)$ can be described as the following coequalizer
\begin{equation}\eqlabel{indeccoeq}\xymatrix{
H\ot H\ar@<.5ex>[rr]^-{\mu} \ar@<-.5ex>[rr]_-{\epsilon\ot H+H\ot \epsilon} && H\ar[rr]^{\coeq} && Q(H)
}\end{equation}
Let us remark that the construction of indecomposables in terms of a coequalizer as above, allows to perform this construction in any category with sufficiently well-behaving coequalizers. We will come back to this in \seref{primitive}.
\item The next example is closely related to the previous one.
Let $H$ be again a Hopf algebra. Consider the space $X\subset H$ consisting of all $x$ such that
\begin{equation}\eqlabel{coprimitive}
(f*g)(x)=f(x)+g(x),
\end{equation}
where $f,g\in H^*$ and $*$ is the convolution product. Then one can compute that the comultiplication restricted to $X$ is cocommutative (that is $\tau\circ\Delta=\Delta$ on all elements of $X$, where $\tau:H\ot H\to H\ot H$ is the switch map). Indeed: consider a base $\{e_i\}$ for $X$, then $x=\sum a_ie_i$, $\Delta(x)=\sum a_{ij} e_i\ot e_j$. Now apply condition \equref{coprimitive} for the dual base elements $f=e^*_i$ and $g=e^*_j$, then one finds that $a_{ij}=a_i+a_j$. The cocommutativity now follows.
We consider the quotient space $C=H/X$. 
Since $X$ is cocommutative, the map $\Upsilon=\Delta-\tau\circ \Delta$ is well-defined on $C$. One can now check that $(C,\Upsilon)$ is a Lie-coalgebra. Let us call $C$ the {\em Lie coalgebra of coprimitives}. 
Moreover, there is a Lie coalgebra morphism 
$$C\to Q(H);\qquad \overline{h}\mapsto \overline{h-\eta(\epsilon(h))}$$ 
\item\label{dual} Let $L$ be a finite-dimensional Lie $k$-algebra. Then its dual space $C=L^*$ can be endowed with the structure of a Lie coalgebra, by putting $\Upsilon:L^*\to L^*\ot L^*\cong (L\ot L)^*$; the dual map of the Lie bracket. Similarly, if $L$ is a YB-Lie algebra, then $C$ is a YB-Lie coalgebra with $\gamma=\lambda^*$. Conversely, if $C$ is any Lie coalgebra (or YB-Lie coalgebra), even infinite-dimensional, then its dual space $C^*$ becomes a Lie algebra. We will treat this in more detail in \seref{DualLie}.
\item Let $\Aa$ be any additive category, and  $\End(\Aa)$ an additive, monoidal category of additive endofunctors on $\Aa$ and natural transformations between them. We will call a YB-Lie coalgebra in $\End(\Aa)$ a {\em Lie comonad} on $\Aa$, see \reref{comonads}.
\end{enumerate}
\end{examples}

\section{Lie monads and comonads}\selabel{LieMonadsComonads}
\subsection{Lie monads}\selabel{liemonad}

We already introduced Lie monads in \exref{LieAlg} as YB-Lie algebras in a category of additive endofunctors on an additive category, let us restate the definition in explicit form. We will provide two generic classes of examples for Lie monads, one arising from YB-Lie algebras and one from YB-Lie coalgebras.

\begin{definition}
A {\em YB-Lie monad}, or {\em Lie monad} for short, on an additive category $\Cc$ is a triple $(\bL,\blambda,\bLambda)$, where $\bL:\Cc\to\Cc$ is an additive functor, $\blambda:\bL\circ \bL\to \bL\circ \bL$ is a natural transformation satisfying the involutive Yang-Banxter equations
\begin{eqnarray}
\blambda\circ\blambda&=&\Id_{\bL\circ \bL}\\
(\Id_{\bL}*\blambda)\circ (\blambda*\Id_{\bL})\circ (\Id_{\bL}*\blambda)&=&(\blambda*\Id_{\bL})\circ(\Id_{\bL}*\blambda)\circ(\blambda*\Id_{\bL})
\end{eqnarray}
and $\bLambda:\bL\circ \bL\to \bL$ is a natural transformation satisfying the following conditions:
\begin{eqnarray}
\bLambda\circ(\Id_{\bL\circ \bL}+\blambda)&=&0_{\bL\circ \bL,\bL}\\
\bLambda\circ (\bLambda*\Id_{\bL})\circ (\Id_{\bL\circ \bL\circ \bL}+t_{\blambda}+w_{\blambda})&=&0_{\bL\circ \bL\circ \bL,\bL}
\end{eqnarray}
where $t_{\blambda}=(\Id_{\bL}*\blambda)\circ(\blambda*\Id_{\bL})$, $w_{\blambda}=(\Id_{\bL}*\blambda)\circ(\blambda*\Id_{\bL})$, and is such that the following diagram commutes:
\begin{equation}\eqlabel{Liemonadcompatibility}
{\xymatrix{\bL\circ \bL\circ \bL \ar[d]_{\Id_{\bL}* \bLambda} \ar[rr]^-{\blambda*\Id_{\bL}} && \bL\circ \bL\circ \bL \ar[rr]^-{\Id_{\bL}* \blambda}
 && \bL\circ \bL\circ \bL \ar[d]^-{\bLambda * \Id_{\bL}} \\
\bL\circ \bL \ar[rrrr]_-{\blambda} &&&& \bL\circ \bL }
}
\end{equation}
$\bm\zeta:(\bL,\blambda,\bLambda)\to (\bL',\blambda',\bLambda')$ is a morphism of Lie monads on $\Cc$ if $\zeta:\bL\to \bL'$ is a natural transformation satisfying the two following two conditions:
\begin{itemize}
\item $\bLambda'_{X}\circ (\zeta*\zeta)_X=\zeta_{X}\circ\bLambda_{X}$ 
\item $\blambda'_{X}\circ{(\zeta*\zeta)_X}={(\zeta*\zeta)_X}\circ \blambda_{X},$
\end{itemize}
whenever $X$ is an object of $\Cc$. 

Lie monads and their morphisms form a category, which will be denoted $\LieMnd(\Cc)$.
\end{definition} 

\begin{example}\exlabel{liemonadfromYBliealgebra}
Let $\Cc$ be an additive monoidal category and $(L,\lambda_L,\lie_{L})$ be a YB-Lie algebra in $\Cc$. Then we have that $\ul\Mnd((L,\lambda_L,\lie_{L})):=(-\ot L, \blambda, \bLambda)$ is a Lie monad on $\Cc$, where $\blambda$ and $\bLambda$ 
are defined on any object $M$ in $\Cc$ as follows:
\begin{eqnarray*}
\blambda_M:= M\ot\lambda_L:&& M\ot L\ot L\to M\ot L\ot L\\
\bLambda_{M}:= M\ot \lie_{L}:&& M\ot L\ot L\to M\ot L
\end{eqnarray*}
This is easily checked by using the antisymmetry and Jacobi-identity of $\lie_{L}$ in $\Cc$. 
The condition $\equref{Liemonadcompatibility}$ is also satisfied; it is condition \equref{compatibility}, combined with the naturality of the associativity constraint $a$. Recall from \cite[Example 3.10]{GV} that the underlying reason for this example to work is that the functor ${\sf End}:\Cc\to \End(\Cc)$ sending an object $X$ in $\Cc$ to the endofunctor $-\ot X$ is a strong monoidal functor. 
\end{example}

As a slight variation of the previous example, we have the following.

\begin{example}
Consider the YB-Lie algebra $A\ot L$ from \exref{newYBLie}, which is in fact a YB-Lie algebra in the category of $A$-bimodules. Then $-\ot_A(A\ot L)\simeq -\ot L$ defines a Lie-monad on the category of (say, right) $A$-modules.
\end{example}

\begin{proposition}\prlabel{liemonadfromYBliealgebra}
Let $\Cc$ be an additive monoidal category. Then the assignment from \exref{liemonadfromYBliealgebra} defines a functor
$$\ul\Mnd:\YBLieAlg(\Cc)\to\LieMnd(\Cc).$$
\end{proposition}
\begin{proof}
Whenever $f:(L,\lambda_L,\lie_{L})\to(L',\lambda_{L'},\lie_{L'})$ is a morphism in $\YBLieAlg(\Cc)$, $\ul\Mnd(f)$ is the natural transformation from $-\ot L$ to $-\ot L'$, defined for any object $X$ in $\Cc$ as $\ul\Mnd(f)_{X}=X\ot f$. It is easily verified that this defines a morphism in $\LieMnd(\Cc)$, using subsequently the facts that $f$ preserves the Lie-bracket and the Yang-Baxter operator.
\end{proof}

The following provides a partial converse of \prref{liemonadfromYBliealgebra}.

\begin{proposition}\prlabel{LieAlgvsLiemonad}
Let $\Cc$ be an additive monoidal category. Suppose that the unit object $I$ is a regular generator and that the endofunctors $-\ot X$ and $X\ot -$ preserve colimits in $\Cc$ for any object $X$ in $\Cc$. Let $L$ be an object in $\Cc$.

If $(-\ot L,\blambda, \bLambda)$ is a Lie monad on $\Cc$ then $(L,\blambda_I, \bLambda_{I})$ is a YB-Lie algebra in $\Cc$.

Moreover, there is a bijective correspondence between YB-Lie algebra-structures on $L$ and Lie monad-structures on the endofunctor $-\ot L$.
\end{proposition}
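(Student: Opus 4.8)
The plan is to realise both the functor $\ul\Mnd$ of \prref{liemonadfromYBliealgebra} and its proposed inverse ``evaluation at $I$'' on the level of the underlying data, and then to check that they match the two sets of axioms. Write $\bL=-\ot L$, so that $\bL\circ\bL\cong-\ot(L\ot L)$ (via $a_{-,L,L}$) and $\bL\circ\bL\circ\bL\cong-\ot(L\ot L\ot L)$; throughout I suppress the associativity and unit constraints, which is harmless by Mac~Lane coherence. Given a Lie monad structure $(\blambda,\bLambda)$ on $\bL$, set $\lambda:=\blambda_I\colon L\ot L\to L\ot L$ and $\lie:=\bLambda_I\colon L\ot L\to L$ (read off through $l$), while $\ul\Mnd$ sends $(\lambda,\lie)$ to $(-\ot\lambda,-\ot\lie)$. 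The whole proposition follows once I show that these assignments are mutually inverse bijections between the relevant natural transformations and morphisms, and that under them the Lie monad axioms correspond exactly to the YB-Lie algebra axioms.

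The key step --- and the main obstacle --- is the following reduction: under the stated hypotheses, for any objects $X,Y$ evaluation at $I$ is a bijection $\Nat(-\ot X,-\ot Y)\cong\Hom_\Cc(X,Y)$, with inverse $\phi\mapsto(-\ot\phi)$. One direction is immediate, since $(-\ot\phi)_I=\phi$ through $l$. For the other, let $\alpha\colon-\ot X\to-\ot Y$ be natural and put $\phi:=\alpha_I$; naturality along any $f\colon I\to M$ gives $\alpha_M\circ(f\ot X)=(f\ot Y)\circ\phi=(1_M\ot\phi)\circ(f\ot X)$, so $\alpha_M$ and $1_M\ot\phi$ agree after precomposition with $f\ot X$ for every such $f$. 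Because $I$ is a regular generator, $M$ is the coequalizer \equref{coequalizergenerator} of coproducts of copies of $I$; applying the colimit-preserving functor $-\ot X$ (as assumed) turns the canonical map $\coprod_{f\colon I\to M}I\to M$ into an epimorphism $\coprod_f(I\ot X)\to M\ot X$ whose components are exactly the $f\ot X$. Hence the two morphisms, agreeing on all components, must satisfy $\alpha_M=1_M\ot\phi$, so $\alpha=-\ot\phi$. This is precisely the statement that the strong monoidal functor $\End\colon\Cc\to\End(\Cc)$ of \exref{liemonadfromYBliealgebra} is full on the relevant hom-sets; faithfulness is the easy direction above.

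Granting this, any Lie monad structure on $\bL$ has $\blambda=-\ot\lambda$ and $\bLambda=-\ot\lie$ for unique $\lambda,\lie$ as above, and the whiskered transformations are computed componentwise as $(\blambda*\Id_{\bL})_M=1_M\ot(1_L\ot\lambda)$ and $(\Id_{\bL}*\blambda)_M=1_M\ot(\lambda\ot 1_L)$, with the analogous formulas for $\bLambda$; consequently $t_\blambda=-\ot t_\lambda$ and $w_\blambda=-\ot w_\lambda$. Substituting these into the Lie monad axioms and using that (by faithfulness) an identity of natural transformations of the form $-\ot(\cdots)$ holds iff its $I$-component does, I would match them off one by one: self-invertibility of $\blambda$ with \equref{zelfinvers}, the Yang--Baxter equation for $\blambda$ with \equref{YB}, the condition $\bLambda\circ(\Id_{\bL\circ\bL}+\blambda)=0$ with the antisymmetry \equref{AS}, the monad Jacobi identity with \equref{Jac}, and the compatibility square \equref{Liemonadcompatibility} with \equref{compatibility}. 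Each correspondence is an ``iff'', so the bijection $(\blambda,\bLambda)\leftrightarrow(\lambda,\lie)$ of the previous paragraph restricts to a bijection between Lie monad structures on $-\ot L$ and YB-Lie algebra structures on $L$; in particular evaluating at $I$ sends a Lie monad to the YB-Lie algebra $(L,\blambda_I,\bLambda_I)$, which gives the first assertion. The only genuine work is the reduction of the second paragraph; the axiom matching is a direct, if slightly tedious, transcription of the whiskering formulas, and the non-strictness of $\Cc$ is absorbed throughout by coherence.
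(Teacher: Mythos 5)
Your proposal is correct and follows essentially the same route as the paper's (much terser) proof: the paper's key assertion that $\blambda_{I\ot X}\simeq\blambda_I\ot X$ and $\bLambda_{I\ot X}\simeq\bLambda_I\ot X$ is exactly your reconstruction lemma $\Nat(-\ot X,-\ot Y)\cong\Hom_\Cc(X,Y)$, which you justify by the same regular-generator-plus-colimit-preservation argument, and the axiom matching is the ``typical computation'' the paper delegates to \cite[Theorem 1.11]{Ver:variations}. You in fact supply more detail than the paper does, and the details check out.
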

\begin{proof}
Using the fact that $I$ is a regular generator, one proves that 
$\blambda_I\ot X\simeq \blambda_{I\ot X}$ and $\bLambda_{I}\ot X\simeq \bLambda_{I\ot X}$ for all objects $X\in \Cc$. 
Applying this fact, one easily verifies the antisymmetry and Jacobi identity for the Lie monad $-\ot L$ from the corresponding properties for the YB-Lie algebra $L$.

For the last statement, one needs to verify that the construction of  \prref{liemonadfromYBliealgebra} together with the construction above leads to the bijective correspondence. This is a typical computation, see e.g. \cite[Theorem 1.11]{Ver:variations} for a similar case.
\end{proof}

\begin{example}\exlabel{liemonadfromYBliecoalgebra}
Let $\Cc$ be an additive, left closed monoidal category and $(C,\gamma_C,\Upsilon_C)$ be a YB-Lie coalgebra in $\Cc$, then $\ul\Mnd'((C,\gamma_C,\Upsilon_C)):=(\H(C,-),\bgamma,\bUpsilon)$, with $\bgamma$ and $\bUpsilon$ defined on any object $M$ by the following diagrams (here the maps $\Pi^C_{C,M}$ are the isomorphisms from \leref{liftingadjoint}) 
\begin{eqnarray*}
\xymatrix{\H(C,\H(C,M))\ar[d]^-{(\Pi^C_{C,M})^{-1}} \ar[rr]^-{\bgamma_M} && \H(C,\H(C,M)) \\
\H(C\ot C,M)  \ar[rr]^-{\H(\gamma_C,M)} && \H(C\ot C,M) \ar[u]^-{\Pi^C_{C,M}}
}\quad
\xymatrix{
\H(C,\H(C,M)) \ar[rr]^-{\bUpsilon_M} \ar[dr]_-{(\Pi^C_{C,M})^{-1}} && \H(C,M) \\
&\H(C\ot C,M)  \ar[ur]_-{\H(\Upsilon_{C},M)}
}
\end{eqnarray*}
is a Lie monad. Indeed, it is easily checked that $\H(C,-)$ is additive and the antisymmetry and Jabobi identity for the Lie monad are verified using the corresponding properties of $\Upsilon_C$ and $\gamma_C$. Similar to  \prref{liemonadfromYBliealgebra}, on shows that this construction is functorial. 
\end{example}

\begin{proposition}\prlabel{liemonadfromYBliecoalgebra}
Let $\Cc$ be an additive closed monoidal category. Then the construction from \exref{liemonadfromYBliecoalgebra} defines a functor
$$\ul\Mnd':\YBLieCoAlg(\Cc)^{op}\to\LieMnd(\Cc).$$
\end{proposition}

\begin{remark}\relabel{comonads}
Dually to all Definitions and Theorems above, one can introduce and study Lie comonads on additive categories. All Lie comonads form a category $\LieCoMnd(\Cc)$. Without mentioning all details explicitly, let us just mention (as this will be used in the sequel) some notation.
\\Let $(C,\gamma_{C},\Upsilon_{C})$ be a YB-Lie coalgebra in $\Cc$. One has a Lie comonad $(C\ot -, \tilde{\bgamma},\tilde{\bUpsilon})$, defined in the obvious way. Letting $(L,\lambda_L,\lie_{L})$ be a YB-Lie algebra in $\Cc$ and provided $\Cc$ is right closed, one also has a Lie comonad $(\H'(L,-),\tilde{\blambda},\tilde{\bLambda})$.
This then induces functors  $\ul\CMnd:\YBLieCoAlg(\Cc)\to \LieCoMnd(\Cc)$ and $\ul\CMnd':\YBLieAlg(\Cc)\to \LieCoMnd(\Cc)$.
\end{remark}

\subsection{The Eilenberg-Moore category of a Lie monad}
Let $(\bL,\blambda,\bLambda)$ be a Lie monad on an additive category $\Cc$. 
We construct the category of {\em Eilenberg-Moore-Lie} objects $\EML(\bL)$ whose objects are couples $(X,\liea_{X})$, where $X$ is an object and $\liea_{X}:{\bL}X\to X$ is a morphism in $\Cc$ such that
$$(\liea_{X}\circ{\bL}\liea_{X})\circ(1_{{\bL}{\bL}X}-\blambda_{X})-\liea_{X}\circ \bLambda_{X}=0_{{\bL}{\bL}X,X}.$$
The morphisms of $\EML(\bL)$ are morphisms $f:X\to Y$ in $\Cc$ such that $f\circ \liea_X=\liea_Y\circ{\bL }f$.

The constructions of Lie monads out of YB-Lie algebras 
as in the previous section correspond nicely with the notion of the Eilenberg-Moore category.

\begin{proposition}\prlabel{EMLvsmodules}
Let $(L,\lambda_L,\lie_{L})$ be a YB-Lie algebra in $\Cc$. Then there is an equivalence of categories
$$\MND_L:\LieMod(L)\to \EML(\Mnd(L)).$$
\end{proposition}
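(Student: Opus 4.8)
The plan is to show that $\MND_L$ is, up to coherence isomorphisms, the identity assignment on the underlying data, so that it is in fact an isomorphism of categories and \emph{a fortiori} an equivalence. On objects I would set $\MND_L(X,\liea_X)=(X,\liea_X)$, now reading the structure map $\liea_X:X\ot L\to X$ as a morphism $\bL X\to X$ for the Lie monad $\bL=\Mnd(L)=-\ot L$ of \exref{liemonadfromYBliealgebra}; on morphisms $\MND_L$ acts as the identity. Everything then reduces to comparing the two defining identities.

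First I would verify that the Eilenberg--Moore--Lie condition for $(X,\liea_X)$ with respect to $\bL$ is equivalent to the right Lie module axiom \equref{Liemodule}. Unfolding $\bL=-\ot L$ gives $\bL X=X\ot L$, $\bL\bL X=(X\ot L)\ot L$ and $\bL\liea_X=\liea_X\ot L$, while by \exref{liemonadfromYBliealgebra} the monad data are $\blambda_X=a^{-1}_{X,L,L}\circ(X\ot\lambda_L)\circ a_{X,L,L}$ and $\bLambda_X=(X\ot\lie_L)\circ a_{X,L,L}$, the associativity constraints entering precisely because ${\sf End}$ is a strong (not strict) monoidal functor. Substituting these into
$$(\liea_X\circ\bL\liea_X)\circ(1_{\bL\bL X}-\blambda_X)-\liea_X\circ\bLambda_X=0_{\bL\bL X,X}$$
and precomposing with the isomorphism $a^{-1}_{X,L,L}:X\ot(L\ot L)\to(X\ot L)\ot L$, the constraints cancel in pairs and one recovers
$$\liea_X\circ(\liea_X\ot L)\circ a^{-1}_{X,L,L}-\liea_X\circ(\liea_X\ot L)\circ a^{-1}_{X,L,L}\circ(X\ot\lambda_L)-\liea_X\circ(X\ot\lie_L)=0_{X\ot(L\ot L),X},$$
which is exactly \equref{Liemodule}. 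Since $a^{-1}_{X,L,L}$ is invertible, the two equations are equivalent, so $\MND_L$ is well defined on objects and the reverse assignment $(X,\liea_X)\mapsto(X,\liea_X)$ lands back in $\LieMod(L)$.

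Next I would dispatch functoriality and fully faithfulness. A Lie module morphism $f:X\to Y$ is by definition a map with $f\circ\liea_X=\liea_Y\circ(f\ot L)$; since $\bL f=f\ot L$, this is verbatim the defining commutativity of an $\EML(\bL)$-morphism. Hence $\MND_L$ is the identity on each Hom-set and is plainly functorial, and the identity-on-data assignment in the other direction is an inverse functor. Being bijective on objects and an identity on Hom-sets, $\MND_L$ is an isomorphism of categories, in particular an equivalence.

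The only genuine work is the bookkeeping in the second paragraph: matching the two defining identities across the associativity constraints $a_{X,L,L}$ that mediate between $(X\ot L)\ot L$ and $X\ot(L\ot L)$. In the strict monoidal case these drop out and the identification is immediate; in general one must check that the coherence data supplied by the strong monoidal functor ${\sf End}$ are exactly those appearing in \exref{liemonadfromYBliealgebra}, so that precomposition with $a^{-1}_{X,L,L}$ converts one equation into the other without residue.
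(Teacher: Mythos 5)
Your proof is correct, and it is the evident argument: the paper actually states \prref{EMLvsmodules} without any proof, so there is nothing to compare against, but your identification of the Eilenberg--Moore--Lie axiom for $-\ot L$ with \equref{Liemodule} (after precomposing with $a^{-1}_{X,L,L}$) and the observation that the morphism conditions coincide verbatim is exactly the verification the authors leave to the reader. Noting that the resulting functor is in fact an isomorphism of categories, not merely an equivalence, is a harmless strengthening.
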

\begin{remark}
Let $(\bC, \bgamma, \bUpsilon)$ be a Lie comonad. Then one can introduce in the canonical way the category of Eilenberg-Moore-Lie objects for this Lie-comonad. Furthermore, if $C$ is a Lie coalgebra, then dually to \prref{EMLvsmodules} we find an equivalence of categories $\CMND_C:\LieCoMod(C)\to \EML(\CMnd(C))$. If $L$ is a Lie algebra, then there is also a canonical functor $\CMND'_L:\LieMod(L)\to \EML(\CMnd'(L))$.
\end{remark}

\section{Dualities between Lie algebras and Lie coalgebras}\selabel{DualLie}

\subsection{Michaelis pairs}

Troughout this section, let $\Cc$ be an additive, closed (strict) monoidal category. 
\begin{definition}\delabel{duality}
\begin{enumerate}[(1)]
\item
A {\em Michaelis pair} $(L,C,\ev)$ consists of a YB-Lie algebra 
$(L,\lambda_L,\lie_L)$, a YB-Lie coalgebra $(C,\gamma_C,\Upsilon_C)$ 
and a morphism
$$\ev: L\ot C\to I$$
in $\Cc$ that renders commutative the following diagrams 
\begin{equation}\eqlabel{Mich1}
\xymatrix{
L\ot L\ot C \ar[rr]^-{L\ot L\ot \Upsilon_C} \ar[d]_{\lie_L\ot C} && L\ot L\ot C\ot C \ar[rr]^-{L\ot \ev\ot C} 
 && L\ot C \ar[d]^\ev\\
L\ot C \ar[rrrr]_-{\ev} &&&& I
}
\end{equation}
and 
\begin{equation}\eqlabel{Mich2}
\xymatrix{
L\ot L\ot C\ot C \ar[rr]^-{L\ot L\ot \gamma_C} \ar[d]_{\lambda_{L}\ot C\ot C} && L\ot L\ot C\ot C \ar[rr]^-{L\ot \ev\ot C} 
 && L\ot C \ar[d]^-{\ev}\\
L\ot L\ot C\ot C\ar[rr]^-{L\ot \ev\ot C} &&L\ot C\ar[rr]^-{\ev}&& I
}
\end{equation}
The morphism $\ev$ is called a {\em duality} between $L$ and $C$; the set of all dualities between $L$ and $C$ is denoted by $\Dual(L,C)$.
\item A morphism between two Michaelis pairs is a couple $(\phi,\psi):(L,C,\ev)\to (L',C',\ev')$, where $\phi:L\to L'$ is a YB-Lie algebra morphism and $\psi:C'\to C$ is a YB-Lie coalgebra morphism 
satisfying
\begin{equation}
\ev'\circ (\phi\ot C')=\ev\circ(L\ot \psi)
\end{equation}
\item Michaelis pairs and their morphisms form a new category $\Mich(\Cc)$.
\end{enumerate}
\end{definition} 

Let $(L,C,\ev)$ be a Michaelis pair. Then using the adjunction properties $\Hom(X\ot L\ot C,X)\simeq \Hom(X\ot L,\H(C,X))$ and $\Hom(L\ot C\ot X,X)\simeq\Hom(C\ot X,\H'(L,X))$, we can associate to the Michaelis pair two morphisms that are natural in $X$, as follows:
\begin{eqnarray}
\zeta_X=\H(C,X\ot \ev)\circ \eta^C_{X\ot L}
:X\ot L\to \H(C,X) \eqlabel{zeta}\\
\theta_X= 
\H'(L,\ev\ot X)\circ \eta^{'L}_{C\ot X}: C\ot X\to \H'(L,X)
\end{eqnarray}
where $\eta$ (respectively $\eta'$) denotes -as before- the counit of the adjunction associated the left (respectively right) closedness of $\Cc$. Using notations of the previous section and denoting all Lie monad morphisms $\Hom(\ul\Mnd(L),\ul\Mnd'(C))$ and Lie comonad morphisms $\Hom(\ul\CMnd(C),\ul\CMnd'(L))$, we now have the following result:
\begin{proposition}\prlabel{dualityliealgebras}
Let $(L,\lambda,\lie_L)$ be a YB-Lie algebra and $(C,\gamma,\Upsilon)$ be a YB-Lie coalgebra in $\Cc$.
\begin{enumerate}[(i)]
\item There are maps
$$\xymatrix{\Dual(L,C) \ar@<.5ex>[rr]^-\alpha && \Hom(\ul\Mnd(L),\ul\Mnd'(C)) \ar@<.5ex>[ll]^-\beta}$$
such that $\beta\circ \alpha=1_{\Dual(L,C)}$. Moreover, if $I$ is a regular generator, then $\alpha\circ\beta=1_{ \Hom(\ul\Mnd(L),\ul\Mnd'(C))}$.
\item There are maps
$$\xymatrix{\Dual(L,C) \ar@<.5ex>[rr]^-{\alpha'} && \Hom(\ul\CMnd(C),\ul\CMnd'(L)) \ar@<.5ex>[ll]^-{\beta'}}$$
such that $\beta'\circ \alpha'=1_{\Dual(L,C)}$. Moreover, if $I$ is a regular generator, then $\alpha'\circ\beta'=1_{\Hom(\ul\CMnd(C),\ul\CMnd'(L))}$.
\end{enumerate}

\end{proposition}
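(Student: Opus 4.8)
The plan is to let $\alpha$ send a duality $\ev$ to the natural transformation $\zeta$ of \equref{zeta}, and to let $\beta$ send a Lie monad morphism $\xi:\ul\Mnd(L)\to\ul\Mnd'(C)$ to the duality obtained by evaluating at the unit object and transposing. Concretely, since $\Cc$ is strict we identify $I\ot L=L$, and $\xi_I:L\to\H(C,I)$ transposes across the adjunction $(-\ot C,\H(C,-))$ to a morphism $\beta(\xi):=(\pi^C_{L,I})^{-1}(\xi_I):L\ot C\to I$. The transformation $\zeta$ is automatically natural in $X$ (by naturality of $\eta^C$ and functoriality of $\H(C,-)$), so it is at least a candidate morphism; the work is then to check that both assignments land in the stated sets and that the two composites behave as claimed.

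For $\alpha$ to be well defined I must verify that $\zeta$ satisfies the two conditions defining a Lie monad morphism, namely $\bUpsilon_X\circ(\zeta*\zeta)_X=\zeta_X\circ\bLambda_X$ and $\bgamma_X\circ(\zeta*\zeta)_X=(\zeta*\zeta)_X\circ\blambda_X$ for every $X$, where $\bLambda_X=X\ot\lie_L$, $\blambda_X=X\ot\lambda_L$, and $\bgamma,\bUpsilon$ are the transports through $\Pi^C_{C,-}$ of $\H(\gamma_C,-)$ and $\H(\Upsilon_C,-)$ from \exref{liemonadfromYBliecoalgebra}. My strategy is to transpose each of these two identities of natural transformations $X\ot L\ot L\to\H(C,X)$ across the adjunction $(-\ot C,\H(C,-))$ and the lifted isomorphism $\Pi$ of \leref{liftingadjoint}, turning them into equalities of morphisms $X\ot L\ot L\ot C\to X$ in $\Cc$. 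Unwinding the definitions of $\zeta$, $\bUpsilon$ and $\bgamma$ and repeatedly using naturality of $\eta^C$, $\epsilon^C$, $\pi$ and $\Pi$, the first identity collapses onto diagram \equref{Mich1} (tensored on the left by $X$) and the second onto \equref{Mich2}; both hold because $(L,C,\ev)$ is a Michaelis pair. Running this transposition backwards proves that $\beta$ is well defined: given a Lie monad morphism $\xi$, its two morphism conditions, read at $X=I$ and transposed, are exactly \equref{Mich1} and \equref{Mich2} for $\ev=\beta(\xi)$.

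Next I check the composites. Evaluating $\zeta=\alpha(\ev)$ at $X=I$ gives, by strictness and the defining formula $\pi^C_{L,I}(\ev)=\H(C,\ev)\circ\eta^C_L$, the equality $\zeta_I=\H(C,\ev)\circ\eta^C_L=\pi^C_{L,I}(\ev)$; applying $(\pi^C_{L,I})^{-1}$ returns $\ev$, so $\beta\circ\alpha=1$. For the reverse composite assume $I$ is a regular generator, put $\ev=\beta(\xi)$ and $\zeta=\alpha(\ev)$, so that the previous line yields $\zeta_I=\xi_I$. For an arbitrary $X$ and any $x:I\to X$, naturality of $\zeta$ and of $\xi$ gives $\zeta_X\circ(x\ot L)=\H(C,x)\circ\zeta_I=\H(C,x)\circ\xi_I=\xi_X\circ(x\ot L)$. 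Since $I$ is a regular generator, $X$ is the coequalizer of a diagram of coproducts of copies of $I$ as in \equref{coequalizergenerator}, and because $-\ot L$ is a left adjoint it preserves this coequalizer; hence the family $\{x\ot L\}_{x:I\to X}$ is jointly epic, forcing $\zeta_X=\xi_X$ for all $X$, i.e. $\alpha\circ\beta=1$. Part (ii) follows by the mirror-image argument: replace $\zeta$ by $\theta$, the left-closed data $(-\ot C,\H(C,-))$ by the right-closed data $(L\ot-,\H'(L,-))$, and $\ul\Mnd,\ul\Mnd'$ by $\ul\CMnd,\ul\CMnd'$ of \reref{comonads}; the two Lie comonad morphism conditions transpose to \equref{Mich1} and \equref{Mich2} in precisely the same fashion.

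The \emph{main obstacle} is the bookkeeping in the well-definedness step. The comonad structure maps $\bgamma$ and $\bUpsilon$ are given only implicitly, through the isomorphism $\Pi^C_{C,-}$ of \leref{liftingadjoint} together with the contravariant action $\H(\gamma_C,-)$, $\H(\Upsilon_C,-)$, and the horizontal composite $\zeta*\zeta$ lands in the iterated functor $\H(C,\H(C,-))$. One must therefore push $\zeta*\zeta$, $\bgamma$ and $\bUpsilon$ through the adjunction transposes with care, invoking naturality of $\eta^C$, $\epsilon^C$, $\pi$, $\Pi$ and the triangle identities at each stage, until the abstract equalities of natural transformations reduce to the concrete commuting diagrams \equref{Mich1} and \equref{Mich2}. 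Once this dictionary is established the remaining arguments are formal, with the regular-generator hypothesis entering only to upgrade equality of components at $I$ to equality of the full natural transformations.
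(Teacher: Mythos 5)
Your proposal is correct and follows essentially the same route as the paper: the same $\alpha(\ev)=\zeta$ from \equref{zeta}, the same $\beta(\xi)=(\pi^C_{L,I})^{-1}(\xi_I)=\epsilon^C_I\circ(\xi_I\ot C)$, the same reduction of the Lie monad morphism conditions to \equref{Mich1} and \equref{Mich2} by transposing across the adjunction, and the same regular-generator/coequalizer-preservation argument to upgrade $\zeta_I=\xi_I$ to $\zeta=\xi$. The only (inessential) difference is that you apply the joint-epimorphy of $\{x\ot L\}_{x:I\to X}$ directly to the untransposed components, whereas the paper first transposes to morphisms $X\ot L\ot C\to X$ and runs the serial coequalizer diagram there.
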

\begin{proof}
$\ul{(i)\alpha}$.
Let $\ev$ be a duality between $L$ and $C$.
We define $\alpha(\ev)=\zeta$ as in \equref{zeta} and use notation as in \exref{liemonadfromYBliealgebra} and \exref{liemonadfromYBliecoalgebra} for $\ul\Mnd(L)$ and $\ul\Mnd'(C)$. Then $\zeta$ will be a morphism of Lie monads if and only if for any object $X\in \Cc$,
$$\bUpsilon_{X}\circ (\zeta*\zeta)_X=\zeta_{X}\circ\bLambda_{X}, \qquad
\bgamma_{X}\circ{(\zeta*\zeta)_X}={(\zeta*\zeta)_X}\circ \blambda_{X}.$$
We only prove the first identity, the second one follows by a similar computation.
We can compute
\begin{eqnarray*}
&&\bUpsilon_{X}\circ (\zeta*\zeta)_X\\
&=& \H(\Upsilon_C,X)\circ (\Pi_{C,X}^C)^{-1}\circ \H(C,\H(C,X\ot \ev))\circ \H(C,\eta^C_{X\ot L}) \circ \H(C,X\ot L\ot \ev)\circ \eta^C_{X\ot L\ot L}\\
&=& \H(\Upsilon_C,X)\circ \H(C\ot C,X\ot \ev)\circ(\Pi_{C,X\ot L\ot C}^C)^{-1}\circ  \H(C,\eta^C_{X\ot L}) \circ \H(C,X\ot L\ot \ev)\circ \eta^C_{X\ot L\ot L}\\
&=& \H(C,X\ot \ev)\circ \underbrace{\H(\Upsilon_C,X\ot L\ot C)\circ (\Pi_{C,X\ot L\ot C}^C)^{-1}\circ  \H(C,\eta^C_{X\ot L})} \circ \H(C,X\ot L\ot \ev)\circ \eta^C_{X\ot L\ot L}
\end{eqnarray*}
Let us first compute the underbraced part separately, then we find, using \equref{Piinvformule}
\begin{eqnarray*}
&&\hspace{-1cm}\H(\Upsilon_C,X\ot L\ot C)\circ (\Pi_{C,X\ot L\ot C}^C)^{-1}\circ  \H(C,\eta^C_{X\ot L})\\
&&\hspace{-1.5cm}=\H(\Upsilon_C,X\ot L\ot C)\circ \H(C\ot C,\epsilon^C_{X\ot L\ot C} \circ (\epsilon^C_{\H(C,X\ot L\ot C)}\ot C)) \circ \eta^{C\ot C}_{\H(C,\H(C,X\ot L\ot C))} \circ  \H(C,\eta^C_{X\ot L})\\
&&\hspace{-1.5cm}=\H(\Upsilon_C,X\ot L\ot C)\circ \H(C\ot C,\epsilon^C_{X\ot L\ot C} \circ (\epsilon^C_{\H(C,X\ot L\ot C)}\ot C) \circ (\H(C\ot C,\eta^C_{X\ot L})\ot C\ot C))\circ \eta^{C\ot C}_{\H(C,X\ot L)}\\
&&\hspace{-1.5cm}=\H(\Upsilon_C,X\ot L\ot C)\circ \H(C\ot C,\epsilon^C_{X\ot L\ot C} \circ (\eta_{X\ot L}^C\ot C) \circ (\epsilon^C_{X\ot L}\ot C))\circ \eta^{C\ot C}_{\H(C,X\ot L)}\\
&&\hspace{-1.5cm}=\H(\Upsilon_C,X\ot L\ot C)\circ \H(C\ot C, \epsilon^C_{X\ot L}\ot C)\circ \eta^{C\ot C}_{\H(C,X\ot L)}\\
&&\hspace{-1.5cm}=\H(C,\epsilon^C_{X\ot L}\ot C)\circ \H(\Upsilon_C,\H(C,X\ot L)\ot C\ot C)\circ \eta^{C\ot C}_{\H(C,X\ot L)}\\
&&\hspace{-1.5cm}=\H(C,\epsilon^C_{X\ot L}\ot C)\circ \H(C,\H(C,X\ot L)\ot \Upsilon_C)\circ \eta^C_{\H(C,X\ot L)}
\end{eqnarray*}
All equalities follow by naturality and adjunction property of closedness, in particular the last equality follows by the naturality of $\eta$ in the upper argument.
We can now continue 
\begin{eqnarray*}
&&\bUpsilon_{X}\circ (\zeta*\zeta)_X\\
&&\hspace{-1.5cm}=\H(C,X\ot \ev)\circ \H(C,\epsilon^C_{X\ot L}\ot C)\circ \H(C,\H(C,X\ot L)\ot \Upsilon_C)\circ \eta^C_{\H(C,X\ot L)} \circ \H(C,X\ot L\ot \ev)\circ \eta^C_{X\ot L\ot L}\\
&&\hspace{-1.5cm}=\H(C,(X\newot \ev)\circ (\epsilon^C_{X\ot L}\newot C)\circ (\H(C,X\newot L)\newot \Upsilon_C)\circ 
(\H(C,X\newot L\newot \ev)\newot C))\circ \eta^C_{\H(C,X\ot L\ot L\ot C)} \circ \eta^C_{X\ot L\ot L}\\
&&\hspace{-1.5cm}=\H(C,(X\newot \ev)\circ (X\newot L\newot \ev\newot C)\circ (\epsilon^C_{X\ot L\ot L\ot C}\newot C)\circ (\H(C,X\newot L\newot L\newot C)\newot \Upsilon_C)\circ 
(\eta^C_{X\ot L\ot L}\newot C))\circ \eta^C_{X\ot L\ot L}\\
&&\hspace{-1.5cm}=\H(C,(X\newot \ev)\circ (X\newot L\newot \ev\newot C)\circ (\epsilon^C_{X\ot L\ot L\ot C}\newot C)\circ (\eta^C_{X\ot L\ot L}\newot C\newot C)\circ (X\newot L\newot L\newot \Upsilon_C))\circ \eta^C_{X\ot L\ot L}\\
&&\hspace{-1.5cm}=\H(C,(X\newot \ev)\circ (X\newot L\newot \ev\newot C)\circ (X\newot L\newot L\newot \Upsilon_C))\circ \eta^C_{X\ot L\ot L}\\
&&\hspace{-1.5cm}=\H(C,(X\ot \ev)\circ (X\ot \lie_L\ot C))\circ \eta^C_{X\ot L\ot L}\\
&&\hspace{-1.5cm}=\H(C,X\ot \ev)\circ \eta^C_{X\ot L}\circ (X\ot\lie_L)=\zeta_X\circ \bLambda_X
\end{eqnarray*}
$\ul{(i)\beta}$.
Suppose that $\zeta:\Mnd(L)\to \Mnd'(C)$ is a Lie monad morphism. The adjunction property $\Hom(X\ot L\ot C,X)\simeq \Hom(X\ot L,\H(C,X))$ now allows us to define 
$\ev=\epsilon_I^C\circ (\zeta_I\ot C)$. 
Then, by the computations of the first part of the proof, we find from $\bUpsilon_X\circ (\zeta*\zeta)_X=\zeta_X\circ \bLambda_X$ that 
$$\H(C,(X\newot \ev)\circ (X\newot L\newot \ev\newot C)\circ (X\newot L\newot L\newot \Upsilon_C))\circ \eta^C_{X\ot L\ot L}
=\H(C,(X\ot \ev)\circ (X\ot \lie_L\ot C))\circ \eta^C_{X\ot L\ot L}$$
and therefore
\begin{equation}\eqlabel{tebewijzen}(X\newot \ev)\circ (X\newot L\newot \ev\newot C)\circ (X\newot L\newot L\newot \Upsilon_C)=(X\ot \ev)\circ (X\ot \lie_L\ot C)
\end{equation}
To see this, put $l$ the left-hand side of \equref{tebewijzen} and $r$ the right-hand side, then tensor $l$ and $r$ on the right-hand side with the identity morphism on $C$, and compose both sides with $\epsilon^C_X$. We then obtain
\begin{eqnarray*}\epsilon^C_X\circ (\H(C,l)\ot C)\circ (\eta^C_{X\ot L\ot L}\ot C) = \epsilon^C_X\circ (\H(C,r)\ot C)\circ (\eta^C_{X\ot L\ot L}\ot C),
\end{eqnarray*}
which is equivalent to 
\begin{eqnarray*}
l \circ \epsilon^C_{X\ot L\ot L\ot C}\circ \eta^C_{X\ot L\ot L}\ot C = r \circ \epsilon^C_{X\ot L\ot L\ot C}\circ \eta^C_{X\ot L\ot L}\ot C,
 \end{eqnarray*}
which implies \equref{tebewijzen}. If we then take $X=I$ in \equref{tebewijzen}, we obtain \equref{Mich1}. Similarly, $\bgamma_{X}\circ{(\zeta*\zeta)_X}={(\zeta*\zeta)_X}\circ \blambda_{X}$ implies \equref{Mich2}.
\item $\ul{(i)}$. We still have to check that both constructions above are mutual inverses. So let $\ev$ be the evaluation map of a given Michaelis pair $(L,C,\ev)$ and denote $\ev'=\beta\circ\alpha(\ev)$, then we find
\begin{eqnarray*}
\ev'&=&\epsilon_I^C\circ (\H(C,\ev)\ot C)\circ (\eta^C_{L}\ot C)\\
&=&\ev\circ \epsilon_{L\ot C}^C\circ (\eta^C_{L}\ot C) = \ev
\end{eqnarray*}
The other way around, suppose that $I$ is a regular generator. Given a Lie monad morphism $\zeta$, we denote $\zeta'=\alpha\circ\beta(\zeta)$, such that
\begin{eqnarray*}
\zeta'_X&=&\H\big(C,X\ot (\epsilon_I^C\circ (\zeta_I\ot C))\big)\circ \eta^C_{X\ot L}\\
&=& \H(C,X\ot\epsilon_I^C)\circ \eta^C_{X\ot H(C,I)} \circ (X\ot \zeta_I) 
\end{eqnarray*}
Hence, clearly $\zeta'_I=\zeta_I$. On the other hand,
$$\zeta_X=\H(C,\epsilon_X^C)\circ \eta^C_{\H(C,X)}\circ \zeta_X=\H(C,\epsilon^C_X)\circ \H(C,\zeta_X\ot L)\circ \eta_{X\ot L}^C,$$
therefore, $\zeta_X=\zeta'_X$ if and only if $\epsilon_X^C\circ (\zeta_X\ot C)=(X\ot \epsilon_I)\circ (X\ot\zeta_I\ot C)$. Denote these natural transformations by $\sigma_X$ and $\tau_X$ respectively. As $I$ is a regular generator, we can construct for any object $X$ a coequalizer $(X,q)$ starting from a suitable fork $I^{(J)}\rightrightarrows I^{(K)}$. This way we obtain a diagram
$$\xymatrix{
I^{(J)} \ar@<.5ex>[rr] \ar@<-.5ex>[rr] && I^{(K)} \ar[rr]^q && X \\
I^{(J)} \ot L\ot C \ar@<.5ex>[u]^{\tau_I^{(J)}} \ar@<-.5ex>[u]_{\sigma_I^{(J)}} \ar@<.5ex>[rr] \ar@<-.5ex>[rr] && I^{(K)} \ot L\ot C \ar[rr]^{q\ot L\ot C} \ar@<.5ex>[u]^{\tau_I^{(K)}} \ar@<-.5ex>[u]_{\sigma_I^{(K)}} && X \ot L\ot C \ar@<.5ex>[u]^{\tau_X} \ar@<-.5ex>[u]_{\sigma_X}
}$$
In this diagram both lines are coequalizers (the lower line because $\Cc$ is a closed category, hence functor of the form $-\ot Y$ have a right adjoint and therefore preserve colimits). By the naturality of $\sigma$ and $\tau$, the diagram commutes serially (i.e. it commutes if we only consider the arrows with $\tau$ and it commutes if we only consider arrows with $\sigma$) and since $\sigma_I=\tau_I$ we then find by the universal property of the coequalizer that $\tau_X=\sigma_X$.\\ 
$\ul{(ii)}$. Is proven in the same way. 
\end{proof}

\begin{proposition}\prlabel{ModuleFunctorMichPair}
Let $(L,C,\ev)$ be a Michaelis pair and $(X,\liec^{X})$ be a left $C$-Lie comodule. Then $(X,\liea_X)$ is a left $L$-Lie module, where 
$$\liea_{X}:~~\xymatrix{L\ot X\ar[rr]^-{L\ot \liec^{X}} && L\ot C\ot X\ar[rr]^-{\ev\ot X}&& I\ot X\cong X}.$$
This construction yields a functor 
$$F:\LieCoMod(C)\to \LieMod(L),$$
from the category of left $C$-Lie comodules to the category of left $L$-Lie modules.
\end{proposition}

\begin{proof}
With notation as in the statement of the proposition, let us check that $(X,\liea_X)$ is indeed a Lie module. We compute
\begin{eqnarray*}
\liea_X\circ (L\ot \liea_X)&=&(\ev\ot X)\circ(L\ot \liec^X)\circ(L\ot \ev\ot X)\circ(L\ot L\ot \liec^X)\\
&=& (\ev\ot X)\circ (L\ot \ev\ot C\ot X)\circ (L\ot L\ot C\ot \liec^X)\circ (L\ot L\ot \liec^X)
\end{eqnarray*}
and
\begin{eqnarray*}
\liea_X\circ (L\ot \liea_X)\circ(\lambda\ot X)&=&(\ev\ot X)\circ(L\ot \liec^X)\circ(L\ot \ev\ot X)\circ(L\ot L\ot \liec^X)\circ(\lambda\ot X)\\
&&\hspace{-3cm}= (\ev\ot X)\circ (L\ot \ev\ot C\ot X)\circ(\lambda\ot C\ot C\ot X)\circ (L\ot L\ot C\ot \liec^X)\circ (L\ot L\ot \liec^X)\\
&&\hspace{-3cm}= (\ev\ot X)\circ (L\ot \ev\ot C\ot X)\circ(L\ot L\ot \gamma\ot X)\circ (L\ot L\ot C\ot \liec^X)\circ (L\ot L\ot \liec^X)
\end{eqnarray*}
Similarly, we find
\begin{eqnarray*}
\liea_X\circ (\lie\ot X)&=&(\ev\ot X)\circ(L\ot \liec^X)\circ(\lie\ot X)\\
&=&(\ev\ot X)\circ(\lie\ot C\ot X)\circ (L\ot L\ot\liec^X)\\
&=&(\ev\ot X)\circ(L\ot \ev\ot C\ot X)\circ (L\ot L\ot\Upsilon\ot X)\circ(L\ot L\ot\liec^X)\\
\end{eqnarray*}
Combining these equalities, we find
\begin{eqnarray*}
&&\liea_X\circ (L\ot \liea_X)-\liea_X\circ (L\ot \liea_X)\circ(\lambda\ot X)-\liea_X\circ (\lie\ot X)\\
&=&(\ev\ot X)\circ (L\ot \ev\ot C\ot X)\circ(L\ot L\ot \gamma\ot X)\\
&&\hspace{1cm}\circ \bigg(L\ot L\ot \big((C\ot \liec^X)-((\gamma\ot X)\circ (C\ot \liec^X))-(\Upsilon\ot X) \big) \bigg)\circ(L\ot L\ot \liec^X)=0
\end{eqnarray*}
where we used the Jacobi identity of the $C$-Lie comodule $X$ in the last equality. Hence we can define $F(X,\liec^X)=(X,\liea_X)$. 
Furthermore, one easily checks that $F$ is well-defined on morphisms.
\end{proof}

\subsection{Strong Michaelis pairs}

Let $\Cc$ be a monoidal category. Let us denote by $\LRgd(\Cc)$ the complete subcategory of $\Cc$ that consists of all left rigid objects in $\Cc$. Similarly, we denote by $\RRgd(\Cc)$ the complete subcategory of $\Cc$ consisting of all right rigid objects. For any two dualities $(Y,X,\ev,\coev)$ and $(Y',X',\ev',\coev')$, we obtain a new duality
$(Y\ot Y',X'\ot X,\ev\circ(Y\ot\ev'\ot X),(X'\ot \coev\ot Y')\circ\coev')$. Consequently, the categories $\LRgd(\Cc)$ and $\RRgd(\Cc)$ are monoidal, and allow monoidal forgetful functors $\LRgd(\Cc)\to \Cc^{op}$ and $\RRgd(\Cc)\to \Cc$. Furthermore, taking the left (resp. right) dual of a left (resp. right) rigid object, induces a pair of inverse equivalences between the categories
\begin{equation}\eqlabel{rigidequiv}
{^*(-)}:\xymatrix{\LRgd(\Cc)^{op} \ar@<.5ex>[rr] && \RRgd(\Cc) \ar@<.5ex>[ll]}:(-)^*
\end{equation}
As a consequence, we obtain the following result.

\begin{proposition}\prlabel{elementaryMich}
Let $(L,C,\ev,\coev)$ be a duality in $\Cc$. Then $L$ is a YB-Lie algebra if and only if $C$ is a YB-Lie coalgebra, and in this case $(L,C,\ev)$ is a Michaelis pair.
\end{proposition}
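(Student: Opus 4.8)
The plan is to exploit the rigidity encoded in the adjoint pair $(L,C,\ev,\coev)$. By the Rigidity subsection it says precisely that $C$ is left rigid with left dual $L$ (so that $\ev:L\ot C\to I$, $\coev:I\to C\ot L$ and the two zig-zag identities hold), and by the Remark there $L$ is then right rigid with right dual $C$. Hence every tensor power $L^{\ot n}$ is right rigid with right dual $C^{\ot n}$, and likewise each $C^{\ot n}$ is left rigid with left dual $L^{\ot n}$; so on hom-sets between tensor powers of $L$ and of $C$ we obtain mutually inverse, order-reversing bijections: a \emph{transpose} $(-)^\vee$ with $(g\circ f)^\vee=f^\vee\circ g^\vee$ and $(f\ot g)^\vee=g^\vee\ot f^\vee$, which is involutive up to the canonical isomorphisms $L^{\ot n}\cong(C^{\ot n})^{*}$. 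Since $\Cc$ is additive, $(-)^\vee$ is additive as well. My strategy is to transport the whole YB-Lie structure across this transpose and to verify, diagram by diagram, that the axioms of a YB-Lie algebra on $L$ become exactly those of a YB-Lie coalgebra on $C$ from \deref{LieCoalgebra}.

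Concretely, first I would record the transpose bijections
$$\Phi:\Hom(L\ot L,L)\xrightarrow{\ \cong\ }\Hom(C,C\ot C),\qquad \Psi:\Hom(L\ot L,L\ot L)\xrightarrow{\ \cong\ }\Hom(C\ot C,C\ot C),$$
which arise from $(-)^\vee$ together with $(L\ot L)^\vee\cong C\ot C$ and $L^\vee\cong C$. Starting from a YB-Lie algebra $(L,\lambda,\lie)$ I put $\gamma_C:=\Psi(\lambda)$ and $\Upsilon_C:=\Phi(\lie)$; starting instead from a YB-Lie coalgebra $(C,\gamma,\Upsilon)$ I put $\lambda_L:=\Psi^{-1}(\gamma)$ and $\lie_L:=\Phi^{-1}(\Upsilon)$. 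As $\Phi$ and $\Psi$ are bijections whose inverses are given by the opposite transpose, these two passages are mutually inverse, so it suffices to treat one direction. Moreover, by the zig-zag identities the transpose $\Upsilon_C=\lie^\vee$ is characterized by $\ev\circ(\lie\ot C)=\ev\circ(L\ot\ev\ot C)\circ(L\ot L\ot\Upsilon_C)$, i.e.\ by \equref{Mich1} (this is \equref{tebewijzen} at $X=I$), and $\gamma_C=\lambda^\vee$ is characterized likewise by \equref{Mich2}. This already settles the final claim: once $\Upsilon_C$ and $\gamma_C$ have been produced, \equref{Mich1} and \equref{Mich2} hold by construction, so $(L,C,\ev)$ is a Michaelis pair.

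The heart of the argument is then the purely formal axiom translation. Because $(-)^\vee$ is additive and order-reversing, $(1_{L\ot L}+\lambda)^\vee=1_{C\ot C}+\gamma_C$ and $0^\vee=0$, so the antisymmetry \equref{AS} dualizes precisely to $\Upsilon_C+\gamma_C\circ\Upsilon_C=0$, which is condition (1) of \deref{LieCoalgebra}; self-invertibility \equref{zelfinvers} of $\lambda$ dualizes to that of $\gamma_C$, and the Yang--Baxter equation \equref{YB} dualizes to the Yang--Baxter equation for $\gamma_C$. The remaining axioms, the Jacobi identity \equref{Jac} and the compatibility square \equref{compatibility}, translate in the same mechanical fashion to condition (2) and to the compatibility diagram of \deref{LieCoalgebra}, using additivity for the three-term sums $1+t_\lambda+w_\lambda$.

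The main obstacle I anticipate is the coherence bookkeeping, and in particular the behaviour of the cyclic operators $t_\lambda$ and $w_\lambda$ under the transpose. Since $(-)^\vee$ reverses both composition and tensor order, and turns an associativity constraint into an inverse one on reindexed objects, a careful computation is needed to establish that $t_\lambda^{\,\vee}=w_{\gamma_C}$ and $w_\lambda^{\,\vee}=t_{\gamma_C}$ (the transpose interchanges the two operators of \leref{b1b2}). This interchange is exactly what is mirrored in the ordering $w_\gamma+t_\gamma$ in condition (2) and in the appearance of $w_\gamma$ in the compatibility square of \deref{LieCoalgebra}; pinning down these identifications, together with the attendant associator coherences, is where the real work lies. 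Everything else then follows formally from the fact that $(-)^\vee$ is an additive, order-reversing, involutive transpose.
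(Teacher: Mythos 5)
Your proposal is correct and follows essentially the same route as the paper: the paper's explicit formulas for $\gamma$ and $\Upsilon$ (built from $\coev$, $\coev$, $\lambda$ or $\lie$, and $\ev$) are precisely the mates $\Psi(\lambda)$ and $\Phi(\lie)$ you describe, and the paper's big commutative diagram is just the hands-on verification that this transpose reverses composition, with the remaining axioms and \equref{Mich1}--\equref{Mich2} handled exactly as you indicate. Your packaging of the verification via the formal properties of an additive, tensor-reversing, involutive transpose (including the swap $t\leftrightarrow w$) is a slightly more systematic presentation of the same argument.
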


\begin{proof}
The equivalence \equref{rigidequiv} induces an equivalence between the categories of YB-Lie algebras in the respective categories. Since a YB-Lie algebra in $\Cc^{op}$ is exactly a Lie coalgebra in $\Cc$, we obtain in fact an equivalence between the categories $\YBLieAlg(\LRgd(\Cc)^{op})\simeq \YBLieCoAlg(\LRgd(\Cc))^{op}$ and $\YBLieAlg(\RRgd(\Cc))$. 

Explicitly, if $(L,C,\ev,\coev)$ be a duality in $\Cc$ and $(L,\lambda,\lie)$ is a YB-Lie algebra, then the YB-Lie coalgebra structure on $C$ is given by $(C,\gamma,\Upsilon)$ where
$$\gamma=C\ot C\ot (\ev\circ (L\ot \ev\ot C))\circ (C\ot C\ot\lambda\ot C\ot C)\circ ((C\ot\coev\ot L)\circ \coev)\ot C\ot C$$
and
$$\Upsilon=(C\ot C\ot \ev)\circ (C\ot C\ot\lie\ot C)\circ \big(((C\ot\coev\ot L)\circ \coev)\ot C\big).$$
Let us just check that $(L,C,\ev)$ is indeed a Michaelis pair. Putting
$\ev^2=\ev \circ (L\ot \ev\ot C)$ and $\coev^2=(C\ot \coev\ot L)\circ \coev$, we find
\begin{eqnarray*}
\ev^2\circ(L\ot L\ot\Upsilon)
&=&\ev^2 \circ\bigg(
L\ot L\ot\big((C\ot C\ot \ev)\circ (C\ot C\ot\lie\ot C)\circ (\coev^2\ot C)\big)\bigg)\\
&&\hspace{-2cm}=\ev\circ (\lie\ot C)\circ (\ev^2\ot L\ot L\ot  C)\circ (L\ot L\ot \coev^2\ot C)
=\ev\circ (\lie\ot C)
\end{eqnarray*} 
Hence, $(L,C,\ev)$ satisfies \equref{Mich1} and by a similar computation one verifies \equref{Mich2}.
\end{proof}

\begin{definition}
A Michaelis pair is called {\em strong}, if it is isomorphic to a Michaelis pair that emerges as in \prref{elementaryMich}.
\end{definition}

The next proposition characterizes strong Michaelis pairs. 

\begin{proposition}\prlabel{charstrongMich}
There is a bijective correspondence between:
\begin{enumerate}[(i)]
\item Strong Michaelis pairs $(L,C,\ev)$;
\item YB-Lie algebras $L$ such that $L$ is a right rigid object in $\Cc$;
\item YB-Lie coalgebras $C$ such that $C$ is a left rigid object in $\Cc$;
\item Michaelis pairs $(L,C,\ev)$ such that the associated Lie monad morphism $\zeta=\alpha(\ev)$ (see \prref{dualityliealgebras}) is an isomorphism;
\item Michaelis pairs $(L,C,\ev)$ such that the associated Lie comonad morphism $\theta=\alpha'(\ev)$ (see \prref{dualityliealgebras}) is an isomorphism.
\end{enumerate}
\end{proposition}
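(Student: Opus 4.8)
The plan is to collapse all five conditions onto the single notion of a dual (adjoint) pair $(L,C,\ev,\coev)$, using \prref{elementaryMich} to pass between the algebra and coalgebra sides and the closed-category identities $\H(C,-)\simeq-\ot C^{*}$ and $\H'(L,-)\simeq L^{*}\ot-$ to handle the monad/comonad reformulations. I would first dispose of (ii)$\Leftrightarrow$(iii): a right rigid YB-Lie algebra $L$ has a right dual $C$, so $(L,C,\ev,\coev)$ is an adjoint pair and \prref{elementaryMich} makes $C$ a left rigid YB-Lie coalgebra; conversely a left rigid YB-Lie coalgebra has a left dual that is a YB-Lie algebra. Since duals are unique up to isomorphism, these assignments are mutually inverse.

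For (i)$\Leftrightarrow$(ii), the implication (ii)$\Rightarrow$(i) is immediate: the adjoint pair above gives, via \prref{elementaryMich}, an elementary and hence strong Michaelis pair. For (i)$\Rightarrow$(ii), a strong pair is isomorphic in $\Mich$ to an elementary pair $(L_{0},C_{0},\ev_{0})$; the underlying YB-Lie algebra isomorphism $\phi\colon L\to L_{0}$ transports the right-dual datum of $L_{0}$ to $L$, and since rigidity of an object is preserved under isomorphism, $L$ is right rigid.

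The core of the argument is (i)$\Leftrightarrow$(iv). In the forward direction I would compute $\zeta_{X}=\H(C,X\ot\ev)\circ\eta^{C}_{X\ot L}$ on an elementary pair and recognise it as the canonical isomorphism $-\ot L\simeq-\ot C^{*}\simeq\H(C,-)$ supplied by rigidity; since an isomorphism of Michaelis pairs induces isomorphic associated monad morphisms (a direct check from the definitions), $\zeta$ is then an isomorphism for every strong pair. Conversely, if $\zeta$ is a natural isomorphism, I would transport the adjunction $(-\ot C,\H(C,-))$ along $\zeta$ to obtain an adjunction $(-\ot C)\dashv(-\ot L)$; its counit at $I$ is exactly $\beta(\zeta)=\ev$ by the identity $\beta\circ\alpha=1_{\Dual(L,C)}$ of \prref{dualityliealgebras}, and reading off the unit $\coev$ at $I$ together with the triangle identities shows $(L,C,\ev,\coev)$ is an adjoint pair. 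Thus $C$ is left rigid with dual $L$, and the Michaelis compatibilities \equref{Mich1}--\equref{Mich2} force $\gamma_{C}$ and $\Upsilon_{C}$ to be precisely the structures produced by \prref{elementaryMich}, so the pair is elementary, hence strong. Finally (i)$\Leftrightarrow$(v) is proved verbatim, replacing $\zeta$ by $\theta=\alpha'(\ev)$, the adjunction $(-\ot C,\H(C,-))$ by $(L\ot-,\H'(L,-))$, and using that a single adjoint pair $(L,C,\ev,\coev)$ yields simultaneously $(-\ot C)\dashv(-\ot L)$ and $(L\ot-)\dashv(C\ot-)$, so $\theta$ is invertible under the very same rigidity hypothesis.

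I expect the main obstacle to be the converse half of (i)$\Leftrightarrow$(iv): manufacturing genuine rigidity -- the triangle identities with the given $\ev$ and $\coev$ -- out of the bare invertibility of $\zeta$, and then checking that the resulting YB-Lie coalgebra structure is the elementary one on the nose rather than merely isomorphic to it. The slickest route is the abstract observation that on a closed monoidal category $-\ot L$ admits a left adjoint exactly when $L$ is right rigid, with the left adjoint necessarily $-\ot C$; this avoids diagram chases, but one must still align the unit and counit with $\coev$ and $\ev$ so that no spurious isomorphism is introduced.
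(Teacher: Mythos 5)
Your overall skeleton matches the paper's: $(ii)\Leftrightarrow(i)\Leftrightarrow(iii)$ via \prref{elementaryMich} and uniqueness of duals, the forward half of $(i)\Leftrightarrow(iv)$ by exhibiting the inverse of $\zeta$ coming from rigidity (the paper writes it explicitly as $\xi_X=(\epsilon^C_X\ot L)\circ(\H(C,X)\ot\coev)$ and checks $\xi\circ\zeta$ and $\zeta\circ\xi$ directly), and $(v)$ by symmetry. You also correctly identify the candidate coevaluation in the converse direction: the paper sets $\coev=\zeta_C^{-1}\circ\eta^C_I$, exactly your ``unit at $I$'', and $\ev=\beta(\zeta)$.

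The genuine gap is in $(iv)\Rightarrow(i)$, precisely where you flag it. Transporting the adjunction $(-\ot C)\dashv\H(C,-)$ along $\zeta^{-1}$ does give an adjunction $(-\ot C)\dashv(-\ot L)$ whose triangle identities hold automatically, but those triangle identities at $I$ are \emph{not} the snake identities for $(L,C,\ev,\coev)$: the transported counit at $C$ is $\epsilon^C_C\circ(\zeta_C\ot C)$ rather than visibly $C\ot\ev$, and the transported unit at $L$ is $\zeta^{-1}_{L\ot C}\circ\eta^C_L$ rather than visibly $L\ot\coev$. Identifying these with the tensored forms is not a formal consequence of naturality of the adjunction (the counit is natural in the outer variable only), and the ``abstract observation'' you propose as a shortcut --- that $-\ot L$ admits a left adjoint iff $L$ is right rigid --- is not available without assuming the adjunction is one of $\Cc$-module functors, which is what you are trying to prove. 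The paper closes this by a direct computation: using the explicit formula $\zeta_X=\H(C,X\ot\ev)\circ\eta^C_{X\ot L}$, naturality of $\eta^C$ and $\epsilon^C$, and the triangle identities of $(-\ot C,\H(C,-))$, it verifies $(C\ot\ev)\circ(\coev\ot C)=C$ and $(\ev\ot L)\circ(L\ot\coev)=L$ by hand. This step needs to be carried out; nothing softer substitutes for it. Your remaining worry --- that the given $\gamma_C,\Upsilon_C$ must agree with the structures produced by \prref{elementaryMich} so that the pair is elementary on the nose --- is a legitimate point that the compatibilities \equref{Mich1}--\equref{Mich2} together with the snake identities do resolve, and the paper is itself terse there.
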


\begin{proof}
The equivalence between the first 3 items follows directly from \prref{elementaryMich}.\\
$\ul{(i)\Rightarrow(iv)}$. Let us prove that $\xi_X=(\epsilon_X^C\ot L)\circ (\H(C,X)\ot \coev)$ is an inverse for $\zeta_X$.
\begin{eqnarray*}
\xi_X\circ \zeta_X&=&(\epsilon_X^C\ot L)\circ (\H(C,X)\ot \coev)\circ \H(C,X\ot \ev)\circ \eta_{X\ot L}^C\\
&=&(\epsilon_X^C\ot L)\circ \H(C,X\ot \ev)\ot C\ot L\circ (\H(C,X\ot L\ot C)\ot \coev)\circ \eta_{X\ot L}^C\\
&=&(X\ot \ev\ot L)\circ (\epsilon_{X\ot L\ot C}\ot L) \circ (\eta^C_{X\ot L}\ot C\ot L)\circ (X\ot L\ot \coev)\\
&=&(X\ot \ev\ot L)\circ (X\ot L\ot \coev)= X\ot L \\
\zeta_X\circ \xi_X&=& \H(C,X\ot \ev)\circ \eta_{X\ot L}^C\circ (\epsilon_X^C\ot L)\circ (\H(C,X)\ot \coev)\\
&=&\H(C,X\ot \ev)\circ \H(C,\epsilon^C_X\ot L\ot C)\circ \eta_{\H(C,X)\ot C\ot L}^C\circ (\H(C,X)\ot \coev)\\
&=&\H(C,\epsilon^C_X)\circ \H(C,\H(C,X)\ot C\ot \ev)\circ \H(C,\H(C,X)\ot \coev\ot C)\circ \eta^C_{\H(X,X)}\\
&=&\H(C,\epsilon^C_X)\circ \eta^C_{\H(X,X)}=\H(C,X)
\end{eqnarray*}
where we used the expression for $\zeta_X$ from \equref{zeta}.\\
$\ul{(iv)\Rightarrow(i)}$. We define $\coev=\zeta_C^{-1}\circ\eta_I^C$. Then we find
\begin{eqnarray*}
(C\ot\ev)\circ(\coev\ot C)&=&(C\ot\ev)\circ(\coev\ot C)\circ \eta_C^C\circ (\eta_I^C\ot C)\\
&&\hspace{-3.5cm}=\eta_C^C\circ (\H(C,C\ot \ev)\ot C)\circ (\H(C,\coev\ot C)\ot C)\circ (\eta_I^C\ot C)\\
&&\hspace{-3.5cm}=\eta_C^C\circ (\H(C,C\ot \ev)\ot C)\circ (\H(C,\zeta_C^{-1}\ot C)\ot C)\circ (\H(C,\eta_I^C\ot C)\ot C)\circ (\eta_I^C\ot C)\\
&&\hspace{-3.5cm}=\eta_C^C\circ (\H(C,C\ot \ev)\ot C)\circ (\H(C,\zeta_C^{-1}\ot C)\ot C)\circ (\eta_{\H(C,C)}\ot C)\circ (\eta_I^C\ot C)\\
&&\hspace{-3.5cm}=\eta_C^C\circ (\H(C,C\ot \ev)\ot C)\circ (\eta_{C\ot L}^C\ot C)\circ (\zeta_C^{-1}\ot C) \circ (\eta_I^C\ot C)\\
&&\hspace{-3.5cm}=\eta_C^C\circ (\zeta_C\ot C)\circ (\zeta_C^{-1}\ot C) \circ (\eta_I^C\ot C)=C
\end{eqnarray*}
A similar computation shows that $(\ev\ot L)\circ (L\ot \coev)=L$.\\
$\ul{(i)\Leftrightarrow(v)}$ is similar to $\ul{(i)\Leftrightarrow(iv)}$.
\end{proof}

\begin{proposition}\prlabel{ModuleFunctorStrongMich}
Let $(L,C,\ev)$ be a strong Michaelis pair. Then the functor 
$$F:\LieCoMod(C)\to \LieMod(L)$$
from \prref{ModuleFunctorMichPair} is an equivalence of categories.
\end{proposition}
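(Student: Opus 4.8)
The plan is to use the rigidity supplied by strongness to write down an explicit inverse functor $G:\LieMod(L)\to\LieCoMod(C)$ and then to check that $F$ and $G$ are mutually inverse. First I would extract a coevaluation: by \prref{charstrongMich} the strongness of $(L,C,\ev)$ provides a morphism $\coev:I\to C\ot L$ for which $(L,C,\ev,\coev)$ is an adjoint pair, so that (working in the strict setting) the zig-zag identities
$$(C\ot\ev)\circ(\coev\ot C)=1_C,\qquad(\ev\ot L)\circ(L\ot\coev)=1_L$$
hold. Since $\ev$ is then a non-degenerate pairing, the Michaelis axioms \equref{Mich1} and \equref{Mich2} determine the YB-Lie coalgebra structure $(\gamma,\Upsilon)$ on $C$ uniquely in terms of $(\lambda,\lie)$ and $(\ev,\coev)$; it therefore agrees with the structure produced by \prref{elementaryMich}, so I may treat $(L,C,\ev)$ as the elementary pair attached to $(L,C,\ev,\coev)$ and use the explicit formulas for $\gamma$ and $\Upsilon$ given there.

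Next I would define $G$ on objects by $G(Y,\liea_Y)=(Y,\liec^Y)$, where
$$\liec^Y=(C\ot\liea_Y)\circ(\coev\ot Y):Y\to C\ot Y,$$
and let $G$ act as the identity on underlying morphisms. The assignment $\liec^X\mapsto(\ev\ot X)\circ(L\ot\liec^X)$ used by $F$ and the assignment $\liea_Y\mapsto(C\ot\liea_Y)\circ(\coev\ot Y)$ used by $G$ are mutually inverse bijections $\Hom(X,C\ot X)\cong\Hom(L\ot X,X)$: this is immediate from the two zig-zag identities together with bifunctoriality of $\ot$, and the same two computations will yield $F\circ G=\Id_{\LieMod(L)}$ and $G\circ F=\Id_{\LieCoMod(C)}$ on underlying data, once $G$ is known to land in $\LieCoMod(C)$.

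The heart of the argument, and the step I expect to be the main obstacle, is that this rigidity bijection matches Lie comodule structures with Lie module structures. One direction is exactly \prref{ModuleFunctorMichPair}: if $\liec^X$ is a $C$-Lie comodule structure, then $(\ev\ot X)\circ(L\ot\liec^X)$ is an $L$-Lie module structure. For the well-definedness of $G$ I would prove the converse, that if $\liea_Y$ satisfies the Lie module axiom then $\liec^Y=(C\ot\liea_Y)\circ(\coev\ot Y)$ satisfies the Lie comodule axiom. I would do this by substituting the explicit expressions for $\gamma$ and $\Upsilon$ from \prref{elementaryMich} into the comodule axiom and transporting the whole identity across the rigidity bijection, where it collapses onto the module axiom for $\liea_Y$; the cancellations are governed entirely by the two zig-zag identities, so this is the formal dual of the diagram chase in \prref{ModuleFunctorMichPair} and introduces no genuinely new computation. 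Because $\ev$ is non-degenerate, vanishing of the comodule axiom for $\liec^Y$ is in fact \emph{equivalent} to vanishing of the module axiom for $\liea_Y$, so the two structures correspond under the bijection.

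Combining the two directions shows that $F$ and $G$ are well-defined functors, and the zig-zag computations of the second paragraph then give $F\circ G=\Id_{\LieMod(L)}$ and $G\circ F=\Id_{\LieCoMod(C)}$ (identity on objects by the zig-zags, identity on morphisms by construction). Hence $F$ is an isomorphism of categories, and in particular the desired equivalence.
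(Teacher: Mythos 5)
Your proposal is correct and follows essentially the same route as the paper: the inverse functor $G$ is defined by exactly the same formula $\liec^Y=(C\ot\liea_Y)\circ(\coev\ot Y)$, its well-definedness is obtained by dualizing the computation of \prref{ModuleFunctorMichPair}, and the identities $FG=\Id$ and $GF=\Id$ follow from the zig-zag identities just as in the paper. The only difference is that you make explicit (correctly) the preliminary point that rigidity forces the given YB-Lie coalgebra structure on $C$ to coincide with the one produced by \prref{elementaryMich}, a step the paper leaves implicit.
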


\begin{proof}
We define a functor $G:\LieMod(L)\to\LieCoMod(C)$ as follows. Take any left $L$-lie module $(X,\liea_X)$. Then we define a $C$-Lie coaction $\liec^X$ on $C$ by
$$\liec^X:\xymatrix{X\ar[rr]^-{\coev\ot X} && C\ot L\ot X\ar[rr]^-{C\ot \liea_X} && C\ot X}.$$
One proves similarly as in \prref{ModuleFunctorMichPair} that $G$ is well-defined.\\
Next, we observe that $FG(X,\liea_X)\cong (X,\liea_X)$. Indeed, if we denote $FG(X,\liea_X)= (X,\liea'_X)$ then
\begin{eqnarray*}\liea'_X&=&(\ev\ot X)\circ(L\ot C\ot\liea_X)\circ(L\ot \coev\ot X) \\
&=&\liea_X\circ(\ev\ot L\ot X)\circ(L\ot \coev\ot X)=\liea_X
\end{eqnarray*}
Similarly, $GF(X,\liec^X)\cong (X,\liec^X)$ and $(F,G)$ is an equivalence of categories.
\end{proof}

From \prref{elementaryMich} and \prref{ModuleFunctorStrongMich}, we now immediately have the following result, which is the ``Lie version'' of the classical analogous result for usual monads (see e.g.\ \cite{EM}).

\begin{corollary}\colabel{strongMichmonad}
Let $(L,R)$ be an adjoint pair of additive endofunctors on an additive category $\Aa$.
Then $L$ is a Lie monad if and only if $R$ is a Lie comonad, and in this situation the Eilenberg-Moore categories are equivalent.
\end{corollary}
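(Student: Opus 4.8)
The plan is to recognise the statement as the specialisation of the Michaelis-pair machinery to the additive monoidal category $\Cc=\End(\Aa)$ of additive endofunctors, in which the tensor product is composition $\circ$ and the unit object is $\Id_\Aa$. First I would record that an adjunction $L\dashv R$, with unit $\eta\colon \Id\to R\circ L$ and counit $\varepsilon\colon L\circ R\to\Id$, is exactly an adjoint pair $(L,R,\varepsilon,\eta)$ in $\End(\Aa)$ in the sense of the rigidity definition: the two triangle identities $R\varepsilon\circ\eta R=\Id_R$ and $\varepsilon L\circ L\eta=\Id_L$ are precisely the two rigidity axioms with $\ev=\varepsilon$ and $\coev=\eta$ (so that $L$ is the left dual of $R$). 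Although $\End(\Aa)$ is in general not closed, the propositions I invoke use only this local rigid structure: their proofs are phrased entirely in terms of $\ev$, $\coev$ and the (co)bracket, never the internal hom, so they apply verbatim here.

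For the first assertion I would appeal to \prref{elementaryMich}. By the very definition of Lie monads and Lie comonads (see \exref{Liecoalgebra}(5) and \reref{comonads}), a Lie monad on $\Aa$ is precisely a YB-Lie algebra in $\End(\Aa)$ and a Lie comonad on $\Aa$ is precisely a YB-Lie coalgebra in $\End(\Aa)$. Applying \prref{elementaryMich} to the adjoint pair $(L,R,\varepsilon,\eta)$ then yields at once that $L$ is a YB-Lie algebra (i.e.\ a Lie monad) if and only if $R$ is a YB-Lie coalgebra (i.e.\ a Lie comonad), and that in this situation $(L,R,\varepsilon)$ is an elementary, hence \emph{strong}, Michaelis pair.

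For the equivalence of Eilenberg--Moore categories I would use this strong Michaelis pair together with \prref{ModuleFunctorStrongMich}. The key point is that the mutually inverse functors $F$ of \prref{ModuleFunctorMichPair} and $G$ of \prref{ModuleFunctorStrongMich} are defined purely by whiskering with $\ev=\varepsilon$ and $\coev=\eta$: from a coaction $\liec^X$ one forms $\liea_X=(\varepsilon\ot X)\circ(L\ot\liec^X)$, and from an action $\liea_X$ one forms $\liec^X=(R\ot\liea_X)\circ(\eta\ot X)$. Read on an object $X$ of $\Aa$, these formulas become the classical adjunction transpose between an $L$-action $LX\to X$ and an $R$-coaction $X\to RX$, namely $\liec^X=R\liea_X\circ\eta_X$ and $\liea_X=\varepsilon_X\circ L\liec^X$, which are inverse to one another by the triangle identities and natural in $X$. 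The computations of \prref{ModuleFunctorMichPair} and \prref{ModuleFunctorStrongMich}, carried out pointwise on $\Aa$, show that $\liea_X$ satisfies the Eilenberg--Moore--Lie condition for the Lie monad $L$ if and only if $\liec^X$ satisfies the dual condition for the Lie comonad $R$; hence these transposes assemble into an equivalence $\EML(L)\simeq\EML(R)$ between the Eilenberg--Moore category of the Lie monad $L$ and that of the Lie comonad $R$.

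The step I expect to be the main obstacle is not any single calculation but the bookkeeping that makes everything fit: fixing the composition order in $\ot=\circ$ and deciding which of $L,R$ is the left dual, so that the directions of $\ev$ and $\coev$ match \deref{duality} and \prref{elementaryMich}; and, more substantively, bridging the gap between the (co)module categories of \prref{ModuleFunctorStrongMich}, which a priori live among (co)modules in $\End(\Aa)$ whose objects are \emph{endofunctors}, and the Eilenberg--Moore categories on $\Aa$, whose objects are \emph{objects of} $\Aa$. This is exactly why I do not route the argument through \prref{EMLvsmodules} (which would produce the Eilenberg--Moore category of the Lie monad $-\circ L$ on $\End(\Aa)$, not of $L$ on $\Aa$). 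Once one observes that $F$ and $G$ are given by whiskering and therefore specialise to the pointwise transpose above, the remaining Lie-condition verifications are precisely those already performed in \prref{ModuleFunctorMichPair} and \prref{ModuleFunctorStrongMich}, and the corollary follows.
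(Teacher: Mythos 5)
Your proposal is correct and follows essentially the same route as the paper, which deduces the corollary directly from \prref{elementaryMich} and \prref{ModuleFunctorStrongMich} applied to the monoidal category of additive endofunctors. Your additional care in explaining how the Lie (co)module categories in $\End(\Aa)$ (whose objects are endofunctors) specialise pointwise to the Eilenberg--Moore categories on $\Aa$ fills in a bookkeeping step that the paper leaves implicit, but it is the same argument.
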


\begin{remark}
It is an interesting question to ask whether the above study of strong dualities between Lie algebras and Lie coalgebras can be generalized to a more general setting, introducing ``rationality'' for Lie coalgebras and considering non-degenerate evaluation morphisms.
\end{remark}

\begin{example}[Finite-dimensional Lie algebras]
If $L$ is a finite-dimensional $k$-Lie algebra, then $L$ is a (left and right) rigid object in the symmetric monoidal category of $k$-vector spaces. Hence $C=L^*$, the vector space dual of $L$ is a Lie coalgebra, as we already remarked in \exref{Liecoalgebra}(\ref{dual}), and $(L,C,\ev)$ is a strong Michaelis pair, where $\ev$ is the usual evaluation map. In this situation $\coev$ is given by the dual basis.
\end{example}

\begin{example}[Infinite-dimensional Lie algebras]
If $L$ is an infinite-dimensional $k$-Lie algebra, then $L$ is no longer a rigid object in $\Vect(k)$. However, we can still consider the associated Lie monad $-\ot L$, as a YB-Lie algebra in the monoidal category of endofunctors on $\Vect(k)$. As the functor $-\ot L$ has a right adjoint $\Hom_k(L,-)$, the functor $-\ot L$ is right rigid in the category of endofunctors. Hence \prref{charstrongMich} applies and we find that $\Hom_k(L,-)$ is a Lie comonad and $(-\ot L,\Hom_k(L,-))$ is a strong Michaelis pair in the category of endofunctors. Consequently, by \coref{strongMichmonad} the category of representations of the Lie algebra $L$ is equivalent with the category of Lie comodules over the Lie comonad $\Hom_k(L,-)$. This infinite-dimensional example motivates the transition to Lie monads and Lie comonads (hence also YB-Lie algebras, as the category of endofunctors is no longer symmetric).
\end{example}

\section{Dualities between Lie algebras and Hopf algebras}\selabel{DualLieHopf}

\subsection{YB-Lie algebra of primitive elements}\selabel{primitive}
In this section, $\Cc$ is an additive, monoidal category that has equalizers and coequalizers which are preserved by functors of the form $-\ot X$ and $X\ot -$, for any object $X$ in $\Cc$. For the remaining part of this section, we fix a braided bialgebra $H$ in $\Cc$, in the sense of \cite{Tak}. More precisely, we consider a 6-tuple $(H,\mu,\eta,\Delta,\epsilon,\lambda)$ satisfying the following conditions:
\begin{itemize}
\item $(H,\mu,\eta)$ is an algebra in $\Cc$;
\item $(H,\Delta,\epsilon)$ is a coalgebra in $\Cc$;
\item $\lambda$ is an {\em involutive} YB-operator for $H$ (this condition is more restrictive than the usual one of \cite{Tak});
\item The morphism $\lambda$ is compatible with $\mu$ in the sense of \equref{assalgebra}, and in a similar way with $\eta,\Delta$ and $\epsilon$; 
\item $\epsilon: H\to I$ is an algebra morphism; $\eta:I\to H$ is a coalgebra morphism in $\Cc$ and 
\begin{equation}\eqlabel{compatibilitybraidedHopf}
(\mu\ot \mu)\circ (H\ot \lambda\ot H)\circ (\Delta\ot\Delta)=\Delta\circ\mu
\end{equation} 
\end{itemize}

\begin{definition}
The {\em primitive elements} of $H$ are defined as the equalizer $(P(H),\eq)$ in the following diagram
$$\xymatrix{P(H)\ar[rr]^-{\eq} && H \ar@<.5ex>[rr]^-{\Delta}  \ar@<-.5ex>[rr]_-{\eta\ot H+H\ot \eta} &&H\ot H}.$$
\end{definition}

It is well-known that, even in the category of vector spaces, the kernel of the tensor product of two morphisms is not necessarily equal to the tensor product of the kernels of these morphisms, but rather it is a bigger space. Hence the following result might be remarkable at first sight.

\begin{proposition}\prlabel{questionAlessandro}
Let $\Cc$ be a $k$-linear monoidal category as above with $\Char k\neq 3$, and $H$ a braided Hopf algebra in $\Cc$. Put $\alpha=\eta\ot H+H\ot \eta$. 
Then $(P(H)\ot P(H),\eq\ot \eq)$ is the equalizer of $(\Delta\ot\Delta,\alpha\ot\alpha)$.
\end{proposition} 

To prove this theorem, we need the following lemmata.

\begin{lemma}
With notation as above, consider an object $T$ with a morphism $t:T\to H\ot H$ such that $(\Delta\ot \Delta)\circ t=(\alpha\ot \alpha)\circ t$, in other words, $(T,t)$ is an equalizing pair for $(\Delta\ot \Delta,\alpha\ot\alpha)$. Then $(T,t)$ is also an equalizing pair for $(\Delta\ot H,\alpha\ot H)$. 
\end{lemma}

\begin{proof}
First remark that 
\begin{eqnarray}
\alpha\ot \alpha&=& (\eta\ot H+H\ot \eta)\ot (\eta\ot H+H\ot \eta)\nonumber\\
&=&\eta\ot H\ot \eta\ot H + \eta\ot H\ot H\ot \eta + H\ot \eta\ot \eta\ot H + H\ot \eta\ot H\ot \eta\eqlabel{alphaalpha}
\end{eqnarray}
Hence, by the counit property (in the first equality) and combining \equref{alphaalpha} with the fact that $\epsilon\circ \eta=k$ and that $(\epsilon\ot H)\circ(H\ot \eta)=\eta\circ \epsilon=(H\ot \epsilon)\ot(\eta\ot H)$ (in the last equality), we find
\begin{eqnarray*}
t&=&(\epsilon\ot H\ot \epsilon\ot H)\circ (\Delta\ot \Delta)\circ t\\
&=&(\epsilon\ot H\ot \epsilon\ot H)\circ (\alpha\ot \alpha)\circ t\\
&=&(H\ot H+H\ot \eta\circ\epsilon + \eta\circ\epsilon\ot H+\eta\circ\epsilon\ot\eta\circ\epsilon)\circ t
\end{eqnarray*}
If we now apply $(\epsilon\ot\epsilon)$ to this obtained equality, we find
\begin{eqnarray*}
(\epsilon\ot \epsilon)\circ t&=& 4(\epsilon\ot\epsilon)\circ t
\end{eqnarray*}
Hence, since $\Char k\neq 3$, $(\epsilon\ot \epsilon)\circ t=0$. We use this in the following computation, where we apply $(H\ot \epsilon)$, again to the equality above.
\begin{eqnarray*}
(H\ot \epsilon)\circ t&=& (2H\ot \epsilon + 2\eta\circ\epsilon\ot \epsilon)\circ t\\
&=& 2(H\ot \epsilon)\circ t
\end{eqnarray*}
We can conclude that 
\begin{equation*}(H\ot \epsilon)\circ t=0.\eqlabel{epst}\end{equation*}
Finally, we can show that $(T,t)$ is an equalizing pair, as stated.
\begin{eqnarray*}
(\Delta\ot H)\circ t&=&(H\ot H\ot \epsilon\ot H)\circ (\Delta\ot\Delta)\circ t\\
&=& (H\ot H\ot \epsilon\ot H)\circ (\alpha\ot\alpha)\circ t\\
&=& (\eta\ot H\ot H + \eta\ot H\ot \eta\circ \epsilon + H\ot \eta\ot H+ H\ot \eta\ot \eta\circ\epsilon)\circ t\\
&=& ((\eta\ot H + H\ot \eta)\ot H)\circ t + ((\eta\ot H +H\ot \eta)\ot \eta\circ \epsilon)\circ t\\
&=& (\alpha\ot H)\circ t +(\alpha\ot \eta)\circ (H\ot \epsilon)\circ t
=(\alpha\ot H)\circ t
\end{eqnarray*}
\end{proof}

In a symmetric way, one shows that $(T,t)$ is also an equalizing pair for $(\Delta\ot H,\alpha\ot H)$. Let us put $P(H)=P$ and recall that equalizers in $\Cc$ are preserved by tensoring with any object. Hence, $(P\ot H,\eq\ot H)$ and $(H\ot P,H\ot \eq)$ are equalizer of the pairs $(\Delta\ot H,\alpha\ot H)$ and $(H\ot \Delta,H\ot \alpha)$ respectively.  
Therefore, we find unique morphisms $e_1:T\to P\ot H$ and $e_2:T\to H\ot P$ such that $t=(\eq\ot H)\circ e_1=(H\ot \eq)\circ e_2$.

\begin{lemma}\lelabel{QA2} $(T,e_1)$ is an equalizing pair for $(P\ot \Delta,P\ot \alpha)$ (respectively, $(T,e_2)$ is an eqlizing pair for $(\Delta\ot P,\alpha\ot P)$). 
\end{lemma}
\begin{proof}
We compute
\begin{eqnarray*}
(\eq\ot H\ot H)\circ (P\ot \Delta)\circ e_1&=& (H\ot \Delta)\circ (\eq\ot H)\circ e_1 \\
&=& (H\ot \Delta)\circ t \\
&=& (H\ot \alpha)\circ t \\
&=& (H\ot \alpha)\circ (\eq\ot H)\circ e_1 \\
&=& (\eq\ot H\ot H)\circ (P\ot \alpha)\circ e_1
\end{eqnarray*}
Since $(P\ot H\ot H,\eq\ot H\ot H)$ is an equalizer, $\eq\ot H\ot H$ is a monomorphism and $(P\ot \Delta)\circ e_1=(P\ot \alpha)\circ e_1$ as needed.
\end{proof} 

We can now easily prove what was announced earlier.

\begin{proof}[Proof of \prref{questionAlessandro}]
Again, by the fact that equalizers in $\Cc$ are preserved by tensoring with objects and by \leref{QA2}, we now find unique morphisms $e,e':T\to P\ot P$ such that $(P\circ \eq)\circ e=e_1$ and $(\eq\circ P)\circ e'=e_2$, respectively. We now claim that $e=e'=:u$ and this is the unique map with the property that $(\eq\circ \eq)\circ u=t$. Clearly, both $e$ and $e'$ satisfy this property. To show the uniqueness, suppose that $v:T\to P\ot P$ is any morphism such that $(\eq\ot \eq)\circ v=t$. We will show that $v=e$. To show this, it suffices to show that $(P\circ \eq)\circ v=e_1$. 
We find 
\begin{eqnarray*}
(\eq\ot H)\circ (P\ot \eq)\circ v &=& (\eq\ot \eq)\circ v = t\\
&=& (\eq\ot H)\circ e_1 
\end{eqnarray*}
Since $(P\ot H,\eq\ot H)$ is an equalizer, hence $\eq\ot H$ is a monomorphism, the claim follows.

So indeed, $(P\ot P,\eq\ot \eq)$ is the equalizer of $(\Delta\ot \Delta,\alpha\ot \alpha)$.
\end{proof}

Our next aim is to show that the primitive elements are endowed with the structure of a YB-Lie algebra. First, let us search for an involutive YB-operator $\lambda_{P(H)}$ for $P(H)$. Such a morphism $\lambda_{P(H)}: P(H)\ot P(H)\to P(H)\ot P(H)$ will be constructed out of the commutativity of the following diagrams:

$$\xymatrix{
H\ot H\ar[rr]^-{\Delta\ot \Delta}\ar[d]_{\lambda_{H}}&&H\ot H\ot H\ot H\ar[d]_{\lambda_{H\ot H}}\\
H\ot H\ar[rr]^-{\Delta\ot\Delta}&&H\ot H\ot H\ot H}\qquad
\xymatrix{
H\ot H\ar[rr]^-{\alpha}\ar[d]_{\lambda_{H}}&&H\ot H\ot H\ot H\ar[d]_{\lambda_{H\ot H}}\\
H\ot H\ar[rr]^-{\alpha}&&H\ot H\ot H\ot H}$$

where $\lambda_{H\ot H}=(H\ot \lambda_{H}\ot H)\circ(\lambda_{H}\ot\lambda_{H})\circ(H\ot \lambda_{H}\ot H)$ and $\alpha=(\eta\ot H+H\ot \eta)\ot(\eta\ot H+H\ot \eta)$. Indeed, since $(P(H)\ot P(H), \eq\ot\eq)$ is again an equalizer in $\Cc$, the commutativity of the above diagrams implies the existence of a unique morphism $\lambda_{P(H)}: P(H)\ot P(H)\to P(H)\ot P(H)$ such that
\begin{equation}\eqlabel{P(H)eq}
(\eq\ot \eq)\circ \lambda_{P(H)}=\lambda_{H}\circ(\eq\ot \eq),
\end{equation}
by the universal property in the definition of equalizer. So, let us check that these two diagrams commute.
We start with the diagram on the left:
\begin{eqnarray*}
&&(H\ot \lambda_{H}\ot H)\circ(\lambda_{H}\ot\lambda_{H})\circ(H\ot \lambda_{H}\ot H)\circ \Delta\ot \Delta\\
&=&(\Delta\ot H)\ot(H\ot \lambda_{H})\ot(H\ot \Delta\ot \Delta)\circ(\lambda_{H}\ot\lambda_{H})\circ(H\ot \lambda_{H}\ot H)\\
&=&(\Delta\ot H)\ot(H\ot \lambda_{H})\circ(\lambda_{H}\ot H)\ot (\Delta\ot H\ot H)\\
&=&\lambda_{H}\circ (H\ot \Delta)\ot (\Delta\ot H\ot H)
\end{eqnarray*}
All of these three equalities use the fact that $\Delta$ is compatible with $\lambda_{H}$.
\\We now consider the diagram on the right hand side. Let us check that $(\eta\ot H\ot\ \eta\ot H)\circ\lambda_{H}=\lambda_{H\ot H}\circ (\eta\ot H\ot\ \eta\ot H)$:
\begin{eqnarray*}
&&(H\ot \lambda_{H}\ot H)\circ(\lambda_{H}\ot\lambda_{H})\circ(H\ot \lambda_{H}\ot H)\circ(\eta\ot H\ot\eta\ot H)\\
&=&(H\ot \lambda_{H}\ot H)\circ(\lambda_{H}\ot\lambda_{H})\circ(\eta\ot\eta\ot H\ot H)\\
&=&(H\ot \lambda_{H}\ot H)\circ(\eta\ot \eta\ot H\ot H)\circ\lambda_{H}\\
&=&(\eta\ot H\ot\ \eta\ot H)\circ\lambda_{H}
\end{eqnarray*}
All of these equalities use the compatibility of $\eta$ with $\lambda_{H}$. Similar computations are made for the other three components of $\alpha$.
\begin{lemma}\lelabel{YPoperatorP(H)}
$\lambda_{P(H)}$ is an involutive YB-operator for $P(H)$.
\end{lemma}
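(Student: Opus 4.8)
The plan is to establish the two defining conditions of a Yang--Baxter operator of order two, namely \equref{zelfinvers} and \equref{YB}, by transporting them along the defining relation \equref{P(H)eq} to the corresponding (already available) identities for $\lambda_H$. The one structural ingredient I would isolate first is that $\eq\ot\eq$, and more generally $\eq\ot\eq\ot\eq$, is a monomorphism. Indeed $\eq$ is an equalizer, hence a regular monomorphism; and since by the standing hypothesis of this section the functors $-\ot X$ and $X\ot -$ preserve equalizers, tensoring $\eq$ with any identity morphism again yields a monomorphism. Writing $\eq\ot\eq=(\eq\ot H)\circ(P(H)\ot\eq)$, and $\eq\ot\eq\ot\eq$ as an analogous composite of such one-sided tensorings, exhibits them as composites of monomorphisms, hence as monomorphisms. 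This furnishes the working principle: any equation between parallel morphisms out of $P(H)\ot P(H)$, respectively $P(H)\ot P(H)\ot P(H)$, may be checked after post-composition with $\eq\ot\eq$, respectively $\eq\ot\eq\ot\eq$, i.e.\ verified ``downstairs'' in $H$.

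For \equref{zelfinvers} I would simply compute, using \equref{P(H)eq} twice,
\begin{eqnarray*}
(\eq\ot\eq)\circ\lambda_{P(H)}\circ\lambda_{P(H)}
&=&\lambda_H\circ(\eq\ot\eq)\circ\lambda_{P(H)}\\
&=&\lambda_H\circ\lambda_H\circ(\eq\ot\eq)
=(\eq\ot\eq),
\end{eqnarray*}
the last equality holding because $\lambda_H$ is a Yang--Baxter operator of order two for $H$. As $\eq\ot\eq$ is a monomorphism, this forces $\lambda_{P(H)}\circ\lambda_{P(H)}=1_{P(H)\ot P(H)}$.

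For the Yang--Baxter equation \equref{YB} applied to $\lambda_{P(H)}$, I would push $\eq\ot\eq\ot\eq$ through each elementary factor occurring on either side. By functoriality of $\ot$ together with \equref{P(H)eq} one obtains
$$(\eq\ot\eq\ot\eq)\circ(\lambda_{P(H)}\ot P(H))=(\lambda_H\ot H)\circ(\eq\ot\eq\ot\eq),$$
and likewise $(\eq\ot\eq\ot\eq)\circ(P(H)\ot\lambda_{P(H)})=(H\ot\lambda_H)\circ(\eq\ot\eq\ot\eq)$, while the associativity constraints $a^{\pm1}_{P(H),P(H),P(H)}$ are intertwined with $a^{\pm1}_{H,H,H}$ by naturality of $a$. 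Composing these intertwining relations along the two words appearing in \equref{YB}, both sides of \equref{YB} for $\lambda_{P(H)}$, post-composed with $\eq\ot\eq\ot\eq$, turn into the two sides of \equref{YB} for $\lambda_H$, post-composed with $\eq\ot\eq\ot\eq$. These agree because $\lambda_H$ satisfies \equref{YB}; since $\eq\ot\eq\ot\eq$ is a monomorphism, \equref{YB} follows for $\lambda_{P(H)}$.

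The argument is essentially bookkeeping, and the only genuinely non-formal inputs are (a) that tensoring the equalizer $\eq$ with identities produces monomorphisms, which is precisely where preservation of equalizers by $-\ot X$ and $X\ot -$ enters, and (b) the careful tracking of the associativity constraints via their naturality as $\eq\ot\eq\ot\eq$ is moved past each factor. I expect (b) — the correct placement of the associators — to be the main bookkeeping obstacle, even though it presents no conceptual difficulty.
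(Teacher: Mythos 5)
Your proposal is correct and follows essentially the same route as the paper: both verify \equref{zelfinvers} and \equref{YB} by post-composing with $\eq\ot\eq$ and $\eq\ot\eq\ot\eq$, transporting everything to $H$ via \equref{P(H)eq}, and cancelling the monomorphism. Your extra remarks justifying why these tensor powers of $\eq$ are monomorphisms (via preservation of equalizers) and tracking the associators are sound refinements of details the paper leaves implicit.
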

\begin{proof}
We have, by the universal property of the equalizer, that 
$$(\eq\ot\eq)\circ \lambda_{P(H)}\circ\lambda_{P(H)}= \lambda_{H}\circ \lambda_{H}\circ (\eq\ot\eq)=\eq\ot\eq.$$
In the second equality we use that $\lambda_{H}$ is involutive.
Since $\eq\ot\eq$ is a monomorphism in $\Cc$, it follows that $\lambda_{P(H)}\circ \lambda_{P(H)}=P(H)\ot P(H)$. 
\\We also have that
\begin{eqnarray*}
&&(\eq\ot\eq\ot\eq)\circ(\lambda_{P(H)}\ot P(H))\circ(P(H)\ot \lambda_{P(H)})\circ(\lambda_{P(H)}\ot P(H))\\
&=&(\lambda_{H}\ot H)\circ(H\ot \lambda_{H})\circ(\lambda_{H}\ot H)\circ(\eq\ot\eq\ot\eq) \\
&=&(H\ot \lambda_{H})\circ(\lambda_{H}\ot H)\circ(H\ot\lambda_{H})\circ(\eq\ot\eq\ot\eq) \\
&=&(\eq\ot\eq\ot\eq)\circ(P(H)\ot \lambda_{P(H)})\circ(\lambda_{P(H)}\ot P(H))\circ(P(H)\ot\lambda_{P(H)})
\end{eqnarray*}
In the first and third equality we use the universal property, whereas in the second one, we use the fact that $\lambda(H)$ is a YB-operator for $H$.
Since $\eq\ot\eq\ot\eq$ is also a monomorphism, we find that $\lambda_{P(H)}$ is a YB-operator as well.
\end{proof}

A braided Hopf algebra $H$ in $\Cc$ is in particular a YB-algebra in $\Cc$. Hence, applying the functor $\Ll:\YBAlg(\Cc)\to \YBLieAlg(\Cc)$ of \conref{commutator} it follows that $\lie_{H}=\mu\circ({H\ot H}-\lambda_{H})$ determines a YB-Lie algebra structure on $H$. 
\\We now wish to construct a Lie-bracket $\lie_{P(H)}$ for $P(H)$, inherited from the bracket $\lie_{H}$ we have for $H$.
This is done very similarly to the construction of $\lambda_{P(H)}$, as described above; by universal property-arguments and using the compatibilty conditions of $H$, together with \equref{compatibilitybraidedHopf}, one verifies the existence of a unique morphism $\lie_{P(H)}:P(H)\ot P(H)\to P(H)$ such that
\begin{equation}\eqlabel{P(H)bracket}
\eq\circ \lie_{P(H)}=\lie_{H}\circ(\eq\ot \eq).
\end{equation}
Moreover, using the fact that $\lie_{H}$ is a Lie-bracket for $H$ and keeping in mind that $\eq\ot\eq$ and $\eq\ot\eq\ot\eq$ are both monomorphisms, one shows, in a similar fashion as before, that the conditions \equref{AS}, \equref{Jac} and \equref{compatibility} are satisfied for $\lie_{P(H)}$.

Analogeously, we can consider $(Q(H),\coeq)$,  the ``indecomposables" of $H$, to be the coequalizer \equref{indeccoeq} in $\Cc$.
Summarizing, we have the following
\begin{proposition}
Let $H$ be a braided bialgebra in $\Cc$, then 
\begin{enumerate}[(i)]
\item $(P(H), \lambda_{P(H)}, \lie_{P(H)})$ is a YB-Lie algebra in $\Cc$ and $\eq:P(H)\to \Ll(H)$ is a YB-Lie algebra morphism;
\item $(Q(H),\gamma_{Q(H)},\Gamma_{Q(H)})$ is a YB-Lie coalgebra in $\Cc$ and $\coeq:Q(H)\to \Ll^c(H)$ is a YB-Lie coalgebra morphism. 
\end{enumerate}
Furthermore, these constructions yield functors
$$P:\Bialg\to \YBLieAlg,\qquad Q:\Bialg\to \YBLieCoAlg$$
\end{proposition}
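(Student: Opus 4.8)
The plan is to obtain (i) by assembling the facts established in the paragraphs preceding the statement, and to deduce (ii) from (i) by passing to the opposite category. For (i), the Yang--Baxter operator $\lambda_{P(H)}$ has already been produced from the universal property of the equalizer $(P(H)\ot P(H),\eq\ot\eq)$ through \equref{P(H)eq}, and \leref{YPoperatorP(H)} shows it is of order two; the bracket $\lie_{P(H)}$ is defined by \equref{P(H)bracket}. Thus it remains only to verify the three axioms \equref{AS}, \equref{Jac} and \equref{compatibility} for $(P(H),\lambda_{P(H)},\lie_{P(H)})$, and then to read off the morphism property of $\eq$. The structural point making all of this work is that $\eq$, $\eq\ot\eq$ and $\eq\ot\eq\ot\eq$ are monomorphisms: since $-\ot X$ and $X\ot-$ preserve equalizers by the standing hypothesis of this section, each is again a (regular) mono, which is precisely the statement that $(P(H)\ot P(H),\eq\ot\eq)$ is an equalizer.

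Given this, each axiom for $P(H)$ descends from the corresponding axiom for the commutator YB-Lie algebra $\Ll(H)=(H,\lambda_H,\lie_H)$ of \conref{commutator}, by cancelling a monomorphism. For antisymmetry one computes
\begin{eqnarray*}
\eq\circ\lie_{P(H)}\circ(1+\lambda_{P(H)})
&=&\lie_H\circ(\eq\ot\eq)\circ(1+\lambda_{P(H)})\\
&=&\lie_H\circ(1+\lambda_H)\circ(\eq\ot\eq)=0,
\end{eqnarray*}
using \equref{P(H)bracket}, then \equref{P(H)eq}, and finally \equref{AS} for $\lie_H$; since $\eq$ is mono, \equref{AS} holds for $\lie_{P(H)}$. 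The Jacobi identity \equref{Jac} and the compatibility \equref{compatibility} follow in the same way after cancelling $\eq\ot\eq\ot\eq$, once one notes that $t_{\lambda_{P(H)}}$ and $w_{\lambda_{P(H)}}$ are carried to $t_{\lambda_H}$ and $w_{\lambda_H}$ through $\eq\ot\eq\ot\eq$, which is immediate from \equref{P(H)eq} and functoriality of $\ot$. Finally, the morphism conditions \equref{respectLie} and \equref{respectYB} for $\eq\colon P(H)\to\Ll(H)$ are literally the defining equations \equref{P(H)bracket} and \equref{P(H)eq}, so $\eq$ is a YB-Lie algebra morphism.

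For (ii) I would invoke the duality $\YBLieCoAlg(\Cc)=\YBLieAlg(\Cc^{op})^{op}$ of \seref{LieCoalgLieComod}. The list of axioms defining a braided bialgebra is \emph{self-dual}: passing to $\Cc^{op}$ interchanges $(\mu,\eta)$ with $(\Delta,\epsilon)$, keeps $\lambda$ a Yang--Baxter operator of order two, and sends the compatibilities of $\lambda$ with $\mu,\eta$ to those with $\Delta,\epsilon$ (and conversely), while \equref{compatibilitybraidedHopf} is symmetric under this interchange; hence $H$ is again a braided bialgebra in $\Cc^{op}$. Moreover the standing hypotheses of this section are themselves self-dual, since equalizers in $\Cc^{op}$ are coequalizers in $\Cc$ and $-\ot X$, $X\ot-$ are preserved, so they transfer to $\Cc^{op}$. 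Under this passage the coequalizer defining $Q(H)$ in $\Cc$ becomes exactly the equalizer defining the primitive elements of $H$ in $\Cc^{op}$, with $\mu$ and $\epsilon$ now in the roles of comultiplication and counit; thus $Q(H)$ is the object $P(H)$ computed in $\Cc^{op}$, with $\coeq$ playing the role of the structure map $\eq$. Applying part (i) in $\Cc^{op}$ then yields that $(Q(H),\gamma_{Q(H)},\Gamma_{Q(H)})$ is a YB-Lie algebra in $\Cc^{op}$, i.e.\ a YB-Lie coalgebra in $\Cc$, and that $\coeq$ is a YB-Lie coalgebra morphism relating $\Ll^c(H)$ and $Q(H)$.

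The two descent computations in (i) are routine; the step I expect to require the most care is checking that the braided-bialgebra axioms are \emph{genuinely} self-dual, so that part (i) may be applied verbatim in $\Cc^{op}$. Should one wish to avoid the opposite-category formalism, the same conclusions follow directly: one constructs $\gamma_{Q(H)}$ and $\Gamma_{Q(H)}$ from the universal property of the coequalizer $(Q(H),\coeq)$ --- dually to \equref{P(H)eq} and \equref{P(H)bracket} --- and cancels the epimorphisms $\coeq$, $\coeq\ot\coeq$, $\coeq\ot\coeq\ot\coeq$ against the corresponding identities for $\Ll^c(H)$. In that approach the one genuinely non-formal input is \equref{compatibilitybraidedHopf}, which is what guarantees that the cobracket of $\Ll^c(H)$ descends to the quotient $Q(H)$.
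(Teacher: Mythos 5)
Your proposal is correct and follows essentially the same route as the paper: part (i) is obtained by transporting the axioms of the commutator YB-Lie algebra $\Ll(H)$ along the monomorphisms $\eq$, $\eq\ot\eq$, $\eq\ot\eq\ot\eq$ via the defining relations \equref{P(H)eq} and \equref{P(H)bracket}, and part (ii) by performing the construction of $P(-)$ in $\Cc^{op}$ using the self-duality of the braided bialgebra axioms. Your write-up merely makes explicit some cancellation computations and the dualization of the standing hypotheses that the paper leaves to the reader.
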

\begin{proof}
The first statement clearly follows from the discussion above. To see that the second statement holds, we need the existence of a YB-operator $\gamma_{Q(H)}$ for $Q(H)$
such that 
\begin{equation}\eqlabel{Q(H)YB}
\gamma_{Q(H)}\circ(\coeq\ot\coeq)=(\coeq\ot\coeq)\circ\gamma_{Q(H)}
\end{equation}
and a morphism $\Gamma_{Q(H)}:Q(H)\to Q(H)\ot Q(H)$ such that
\begin{equation}\eqlabel{Q(H)cobracket}
\coeq\ot\coeq \circ \Gamma_{H}=\Gamma_{Q(H)}\circ\coeq,
\end{equation}
where $\Gamma_{H}=(H\ot H-\lambda_{H,H})\circ \Delta_{H}$, the co-bracket for $H$. For these ingredients to exist and to satisfy the conditions of \deref{LieCoalgebra}, it is sufficient to perform the construction of primitive elements $P(-)$ in the opposite category $\Cc^{op}$ and remark that bialgebras are ``selfdual" objects in a monoidal category, hence bialgebras in $\Cc^{op}$.
\end{proof}
\begin{remark}
When $\Cc$ is the category of $k$-vector spaces over a field $k$, the coequalizer $(Q(H),\coeq)$ coincides with Michaelis' original definition of $Q(H)$, as we remarked in \exref{Liecoalgebra}(\ref{indecomposables}). 
\end{remark}

\subsection{Takeuchi pairs}

Let $(H,\mu_H,\eta_H,\Delta_H,\epsilon_H,\lambda_H)$ and $(K,\mu_{K},\eta_{K},\Delta_{K},\epsilon_{K},\lambda_{K})$ be two braided bialgebras in $\Cc$. We adapt the definition of ``dual pair of bialgebras" (cf. \cite{K} e.g.) to the actual setting, embodied by the following definition:
\begin{definition}\delabel{Takpair}
\begin{enumerate}[(1)]
\item $(H,K,\dual)$ is called a {\em Takeuchi pair} in $\Cc$ if there exists a morphism in $\Cc$ $\dual:H\ot K\to I$, such that the following conditions hold:
\begin{enumerate}
\item $\dual\circ(H\ot\eta_{K})=\epsilon_{H}$;
\item $\dual\circ(\eta_{H}\ot K)=\epsilon_{K}$;
\item $\dual\circ(\mu_{H}\ot K)=\dual\circ(H\ot\dual\ot {K})
\circ(H\ot H\ot \Delta_{K})$;
\item $\dual\circ(H\ot \mu_{K})=\dual\circ(H\ot\dual\ot {K})
\circ(\Delta_{H}\ot K\ot K)$; 
\item
$\dual\circ (H\ot\dual\ot K)\circ (\lambda_{H}\ot K\ot K)
=\dual\circ (H\ot\dual\ot K)\circ (H\ot H\ot \lambda_{K}).
$
\end{enumerate}
\item 
A morphism of Takeuchi pairs is a pair $(\phi,\psi):(H,K,\dual)\to (H',K',\dual')$, where $\phi:H\to H'$ and $\psi:K\to K'$ are morphisms of braided bialgebras such that $\dual'=\dual\circ (\phi\ot \psi)$.
\item Takeuchi pairs and their morphisms constitute a category that we denote by $\Tak(\Cc)$.
\end{enumerate} 
\end{definition}

\begin{lemma}\lelabel{nogeencompatibiliteit}
Let $(H,K,\dual)$ be a Takeuchi pair in $\Cc$, then we have the following equality:
$$\dual\circ(
\lie_{H}
\ot K)=\dual\circ(H\ot\dual\ot K)\circ(H\ot H\ot \Gamma_{K}),$$
where $\Gamma_{K}=(K\ot K-\lambda_{K})\circ \Delta_{K}$ and $\lie_H=\mu_H\circ(H\ot H-\lambda_H)$. 
\end{lemma}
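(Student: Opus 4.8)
The plan is to establish the identity by a single direct string computation, expanding the commutator bracket $\lie_H = \mu_H\circ(H\ot H - \lambda_H)$ on the left-hand side and reducing everything to the compatibility conditions (c) and (e) of \deref{Takpair}, together with the bifunctoriality of $\ot$ and the additivity of the monoidal structure (recall that $(f+g)\ot h = f\ot h + g\ot h$). No auxiliary construction is needed; the whole content is bookkeeping of the axioms.

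First I would use functoriality and additivity of $\ot$ to split the left-hand side as
$$\dual\circ(\lie_H\ot K) = \dual\circ(\mu_H\ot K) - \dual\circ(\mu_H\ot K)\circ(\lambda_H\ot K).$$
To the first summand I apply condition (c) verbatim, obtaining $\dual\circ(H\ot\dual\ot K)\circ(H\ot H\ot \Delta_K)$, which is already the ``$\Delta_K$-part'' of the target expression built from $\Gamma_K$. For the second summand I again invoke (c) to rewrite $\dual\circ(\mu_H\ot K)$, so that this term becomes $\dual\circ(H\ot\dual\ot K)\circ(H\ot H\ot \Delta_K)\circ(\lambda_H\ot K)$.

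The crucial move is then the interchange law: since $\lambda_H$ acts only on the two $H$-factors while $\Delta_K$ acts only on the $K$-factor, both composites equal $\lambda_H\ot\Delta_K$, so that $(H\ot H\ot \Delta_K)\circ(\lambda_H\ot K) = (\lambda_H\ot K\ot K)\circ(H\ot H\ot \Delta_K)$. This pulls $\lambda_H$ to the outside and lets me apply condition (e), namely $\dual\circ(H\ot\dual\ot K)\circ(\lambda_H\ot K\ot K) = \dual\circ(H\ot\dual\ot K)\circ(H\ot H\ot \lambda_K)$, turning the second summand into $\dual\circ(H\ot\dual\ot K)\circ(H\ot H\ot(\lambda_K\circ\Delta_K))$.

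Subtracting the two summands and using additivity once more yields
$$\dual\circ(H\ot\dual\ot K)\circ\bigl(H\ot H\ot((K\ot K-\lambda_K)\circ\Delta_K)\bigr) = \dual\circ(H\ot\dual\ot K)\circ(H\ot H\ot\Gamma_K),$$
which is exactly the right-hand side, by the definition $\Gamma_K = (K\ot K-\lambda_K)\circ\Delta_K$. I do not expect a serious obstacle: the only point demanding genuine care is the interchange step, i.e.\ correctly tracking which tensor factor each morphism occupies so that $\lambda_H$ may legitimately be commuted past $\Delta_K$, and keeping the additivity of $\ot$ in view so that the bracket distributes across $\dual$ in the opening and closing lines.
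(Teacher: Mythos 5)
Your proof is correct and follows essentially the same route as the paper's: expand the commutator, apply condition (c) to rewrite $\dual\circ(\mu_H\ot K)$, commute $\lambda_H$ past $\Delta_K$ by bifunctoriality, and finish with condition (e); the only cosmetic difference is that you distribute into two summands at the outset, while the paper carries the difference $H\ot H-\lambda_H$ along as a single morphism.
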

\begin{proof}
We compute
\begin{eqnarray*}
&&\dual\circ(
\lie_{H}
\ot K)\\
&=&\dual\circ\big(\mu_H\circ(H\ot H-\lambda_{H})\ot K\big))=\dual\circ(\mu_H\ot K)\circ\big((H\ot H-\lambda_H)\ot K\big)\\
&=&\dual\circ(H\ot\dual\ot {K})\circ(H\ot H\ot \Delta_{K})\circ\big((H\ot H-\lambda_H)\ot K\big)\\
&=&\dual\circ(H\ot\dual\ot {K})\circ\big((H\ot H-\lambda_H)\ot K\ot K\big)\circ(H\ot H\ot \Delta_{K})\\
&=&\dual\circ(H\ot\dual\ot {K})\circ\big(H\ot H\ot (K\ot K-\lambda_K)\big)\circ(H\ot H\ot \Delta_{K})\\
&=&\dual\circ(H\ot\dual\ot K)\circ(H\ot H\ot \Gamma_{K})
\end{eqnarray*}
We used the third condition of \deref{Takpair} in the third equality and the fifth condition of \deref{Takpair} in the third one.
\end{proof}
\begin{proposition}
Let $(H,K,\dual)$ be a Takeuchi pair in $\Cc$, then $(P(H),Q(K),\ev)$ is a Michaelis pair. Moreover, we obtain a functor 
$$\Pp:\Tak(\Cc)\to \Mich(\Cc),\ \Pp(H,K,\dual)=(P(H),Q(K),\ev)$$
\end{proposition}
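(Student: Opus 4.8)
The plan is to produce the evaluation $\ev\colon P(H)\ot Q(K)\to I$ first, and then to verify the two defining diagrams \equref{Mich1} and \equref{Mich2} of a Michaelis pair by transporting everything back to $H$ and $K$ along the structural morphisms $\eq\colon P(H)\to H$ and $\coeq\colon K\to Q(K)$. Since by the standing hypothesis of this section the functors $-\ot X$ and $X\ot-$ preserve coequalizers, the morphism $P(H)\ot\coeq$ is again a coequalizer, namely of $P(H)\ot\mu_K$ and $P(H)\ot(K\ot\epsilon_K+\epsilon_K\ot K)$. To define $\ev$ I would check that $\dual\circ(\eq\ot K)$ coequalizes this pair: using condition (d) of \deref{Takpair} together with the equalizer identity $\Delta_H\circ\eq=(\eta_H\ot H+H\ot\eta_H)\circ\eq$, and then conditions (a)--(b) to collapse the two resulting summands, one obtains $\dual\circ(\eq\ot K)\circ(P(H)\ot\mu_K)=\dual\circ(\eq\ot K)\circ(P(H)\ot(K\ot\epsilon_K+\epsilon_K\ot K))$. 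The universal property then yields a unique $\ev$ with $\ev\circ(P(H)\ot\coeq)=\dual\circ(\eq\ot K)$.

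For \equref{Mich1} I would precompose both composites with the epimorphism $P(H)\ot P(H)\ot\coeq$ (again a coequalizer, hence epi) and match the two precompositions. On the left, $\ev\circ(\lie_{P(H)}\ot Q(K))\circ(P(H)\ot P(H)\ot\coeq)$ equals $\dual\circ((\eq\circ\lie_{P(H)})\ot K)=\dual\circ(\lie_H\ot K)\circ(\eq\ot\eq\ot K)$ by \equref{P(H)bracket}, which by \leref{nogeencompatibiliteit} becomes $\dual\circ(H\ot\dual\ot K)\circ(\eq\ot\eq\ot\Gamma_K)$. On the right, inserting $\Gamma_{Q(K)}\circ\coeq=(\coeq\ot\coeq)\circ\Gamma_K$ from \equref{Q(H)cobracket} (here $\Gamma_{Q(K)}$ is the cobracket $\Upsilon$ of the YB-Lie coalgebra $Q(K)$) and then replacing each occurrence of $\ev\circ(P(H)\ot\coeq)$ by $\dual\circ(\eq\ot K)$ yields the same morphism $\dual\circ(H\ot\dual\ot K)\circ(\eq\ot\eq\ot\Gamma_K)$. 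As $P(H)\ot P(H)\ot\coeq$ is epi, \equref{Mich1} follows.

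The verification of \equref{Mich2} runs along the same lines but precomposes with the epimorphism $P(H)\ot P(H)\ot\coeq\ot\coeq$. Using \equref{P(H)eq} on the left and the defining relation \equref{Q(H)YB} of $\gamma_{Q(K)}$, namely $\gamma_{Q(K)}\circ(\coeq\ot\coeq)=(\coeq\ot\coeq)\circ\lambda_K$, on the right, and repeatedly replacing $\ev\circ(P(H)\ot\coeq)$ by $\dual\circ(\eq\ot K)$, both sides reduce, after precomposition with $\eq\ot\eq\ot K\ot K$, to the morphism $\dual\circ(H\ot\dual\ot K)$ composed with $\lambda_H\ot K\ot K$ and with $H\ot H\ot\lambda_K$ respectively. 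These two agree by condition (e) of \deref{Takpair}, so \equref{Mich2} holds, and $(P(H),Q(K),\ev)$ is a Michaelis pair.

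For the functor $\Pp$ I would send a Takeuchi morphism $(\phi,\psi)\colon(H,K,\dual)\to(H',K',\dual')$ to the induced pair on primitives and indecomposables: since $\phi$ is a coalgebra morphism it restricts along the equalizers to $P(\phi)\colon P(H)\to P(H')$, and since $\psi$ is an algebra morphism it passes to the coequalizers to give the induced map on indecomposables, both by the relevant universal properties and compatible with the YB-operators and brackets by the arguments of \seref{primitive}. The defining relation $\dual=\dual'\circ(\phi\ot\psi)$ then transports, via $\phi\circ\eq=\eq'\circ P(\phi)$ and $\coeq'\circ\psi=Q(\psi)\circ\coeq$ and the fact that $\coeq$ is epi, to the evaluation-compatibility $\ev=\ev'\circ(P(\phi)\ot Q(\psi))$, exhibiting the image as a morphism of Michaelis pairs; preservation of identities and composition is then formal from uniqueness in the universal properties. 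The main obstacle throughout is the diagram-chase bookkeeping of the middle two paragraphs: the real work is organizing the reductions so that the monomorphism property of the $\eq^{\ot n}$ and the epimorphism property of the $\coeq^{\ot n}$ (both supplied by the tensor-preservation hypothesis) let one replace the intrinsic data $\lie_{P(H)}$, $\Gamma_{Q(K)}$, $\lambda_{P(H)}$, $\gamma_{Q(K)}$ by their counterparts on $H$ and $K$ and land precisely on \leref{nogeencompatibiliteit} and condition (e) of \deref{Takpair}.
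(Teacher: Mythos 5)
Your proof is correct and takes essentially the same route as the paper's: you construct $\ev$ from the universal property of the coequalizer $P(H)\ot\coeq$ exactly as the paper does, and you verify \equref{Mich1} and \equref{Mich2} by precomposing with the epimorphisms $P(H)\ot P(H)\ot\coeq$ and $P(H)\ot P(H)\ot\coeq\ot\coeq$ and reducing to \leref{nogeencompatibiliteit} and condition (e) of \deref{Takpair}, which is precisely the paper's diagram chase (your only slip is invoking conditions (a)--(b) where only (b) is needed, twice). Your closing paragraph on the action of $\Pp$ on morphisms is a useful supplement, as the paper's proof treats only the object part.
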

\begin{proof}
In order to make our notation not too heavy, let us put $P=P(H)$ and $Q=Q(K)$ in what follows.
Let us first look for a suitable morphism $\ev:P\ot Q\to I$. We know that $(Q,\coeq_K)$ is a coequalizer, and as coequalizers are preserved by tensoring in $\Cc$, $(P\ot Q,P\ot \coeq_K)$ is a coequalizer as well. 
$$\xymatrix{P\ot K\ot K \ar@<.5ex>[rr]^-{P\ot \mu_{K}}  \ar@<-.5ex>[rr]_-{P\ot(K\ot\epsilon_{K}+\epsilon_{K}\ot K)} &&P\ot K\ar[rrd]_-{\dual\circ(\eq_{H}\ot K)} \ar[rr]^-{P\ot \coeq_K} && P\ot Q \ar@{.>}[d]^{\ev} \\ &&&&I}.$$
Therefore, if $\dual\circ(\eq_{H}\ot K):P\ot K\to I$ coequalizes the pair $(P\ot \mu_K,P\ot(K\ot\epsilon_{K}+\epsilon_{K}\ot K))$, then the universal property induces a (unique) morphism
$\ev:P\ot Q\to I$ such that 
\begin{equation}\eqlabel{nodighieronder}
\ev\circ(P\ot \coeq)=\dual\circ(\eq\ot K)
\end{equation}
We calculate:
\begin{eqnarray*}
&& \dual\circ(\eq\ot K)\circ(P\ot \mu_{K})=\dual\circ(P\ot \mu)\circ (\eq\ot K\ot K)\\
&=&\dual\circ (H\ot \dual\ot K)\circ (\Delta_{H}\ot K\ot K)\circ (\eq\ot K\ot K)\\
&=&\dual\circ (H\ot \dual\ot K)\circ\big((\eta_{H}\ot H+H\ot\eta_{H})\ot K\ot K\big)\circ (\eq\ot K\ot K)\\
&=&\dual\circ \big(H\ot (K\ot \epsilon_{K}+\epsilon_{K}\ot K)\big)\circ (\eq\ot K\ot K)\\
&=&\dual\circ(\eq\ot K)\circ \big(P\ot (\epsilon_{K}\ot K+K\ot\epsilon_{K})\big),
\end{eqnarray*}
where we use the fourth condition of \deref{Takpair} in the second equality, the definition of the equalizer $(P,\eq)$ in the third equality, 
and the second condition of \deref{Takpair} in the fourth equality. 
\\We now have to prove that the two diagrams, occuring in \deref{duality}, commute. 
Let us start with the proof of the equality
\begin{equation}\eqlabel{tweedediagramma}
\ev\circ(P\ot \ev\ot Q) \circ (P\ot P\ot \gamma_{Q})=
\ev\circ(P\ot\ev\ot Q)\circ (\lambda_{P}\ot Q\ot Q)
\end{equation}
Applying \equref{Q(H)YB} in the first equality, \equref{nodighieronder} in the second and sixth one, the fifth condition of \deref{Takpair} in the fourth one and \equref{P(H)eq} in the fifth equality, we find:
\begin{eqnarray*}
&&\ev \circ(P\ot \ev\ot Q) \circ(P\ot P\ot \gamma_{Q})\circ(P\ot P\ot\coeq\ot \coeq)\\
&=&\ev\ \circ(P\ot \ev\ot Q) \circ(P\ot P\ot\coeq\ot\coeq)\circ(P\ot P\ot \lambda_{K})\\
&=&\dual\circ (H\ot\dual\ot K) \circ (\eq\ot\eq\ot K\ot K)\circ(P\ot P\ot \lambda_{K})\\
&=&\dual\circ (H\ot\dual\ot K) \circ (H\ot H\ot \lambda_{K})\circ (\eq\ot\eq\ot K\ot K)\\
&=&\dual\circ (H\ot\dual\ot K) \circ (\lambda_{H}\ot K\ot K)\circ (\eq\ot\eq\ot K\ot K)\\
&=&\dual\circ (H\ot\dual\ot K) \circ (\eq\ot\eq\ot K\ot K)\circ (\lambda_{P}\ot K\ot K)\\
&=&\ev \circ(P\ot \ev\ot Q) \circ (P\ot P\ot \coeq\ot\coeq) \circ(\lambda_{P}\ot K\ot K)\\
&=&\ev \circ(P\ot \ev\ot Q)\circ(\lambda_{P}\ot K\ot K)\circ(P\ot P\ot \coeq\ot\coeq)
\end{eqnarray*}
As $P\ot P\ot\coeq\ot \coeq$ is an epimorphism in $\Cc$, \equref{tweedediagramma} holds.
\\We now proceed with proving the commutativity of the other diagram. Using 
 \equref{nodighieronder} in the second equality and the sixth one, \equref{P(H)bracket} in the third equality, \leref{nogeencompatibiliteit} in the fourth one, and finally \equref{Q(H)cobracket} in the last equality, we calculate consequently:
\begin{eqnarray*}
&&\ev\circ(\lie_{P}\ot Q) \circ(P\ot P\ot\coeq)
=\ev\circ (P\ot \coeq)\circ(\lie_{P}\ot K)\\
&=&\dual\circ (\eq\ot K)\circ(\lie_{P}\ot K)
=\dual\circ(\lie_{H}\ot K)\circ(\eq\ot\eq\ot K)\\
&=&\dual\circ(H\ot\dual\ot K)\circ(H\ot H\ot \Gamma_{K})\circ(\eq\ot\eq\ot K)\\
&=&\dual\circ(H\ot\dual\ot K)\circ(\eq\ot\eq\ot K\ot K)\circ (P\ot P\ot \Gamma_{K})\\
&=&\ev\circ(P\ot\ev\ot Q)\circ(P\ot P\ot\coeq\ot\coeq)\circ (P\ot P\ot \Gamma_{K})\\
&=&\ev\circ(P\ot\ev\ot Q)\circ(P\ot P\ot \Gamma_{Q})\circ (P\ot P\ot \coeq)
\end{eqnarray*}
As $P\ot P\ot \coeq$ is an epimorphism in $\Cc$, the above is equivalent with the equality we were looking for. This establishes the result.
\end{proof}

\begin{example}
Let $H$ be a Hopf $k$-algebra over a field $k$, and $H^\circ$ its Sweedler dual. Denote by $H'$ the opposite-co-opposite Hopf $k$-algebra of $H^\circ$. 
Then $(H',H,\dual)$ is a Takeuchi pair, where $\dual$ is the usual evaluation map. Hence, we find that $(P(H'),Q(H),\ev)$ is a Michaelis pair, where $\ev$ is again the usual evaluation map. Michaelis \cite{Mich2} proved moreover that $P(H^\circ)\cong Q(H)^*$, i.e. this Michaelis pair is always strong. In \cite{GV2}, we generalize this result in a setting of additive symmetric monoidal categories, so that it applies in particular to Turaev's Hopf group coalgebras.
\end{example}

Given a braided Hopf algebra, recall from \seref{adjointmodules} that there exists an induction functor $\Ind:\Mod(H)\to\LieMod(\Ll(H))$. On the other hand, the YB-Lie algebra morphism $\eq:P(H)\to H$ induces a functor $\LieMod(\Ll(H))\to \LieMod(P(H))$. Therefore, we obtain a combined functor
$$\Mod(H)\to \LieMod(P(H))$$
Dually, for another braided Hopf algebra $K$, we find a functor $\CoMod(K)\to \LieCoMod(Q(K))$. Therefore, given a Takeuchi pair $(H,K,\dual)$ we obtain the following diagram of functors between categories of left (Lie) (co) modules.
\begin{equation}\eqlabel{MichTakModule}
\xymatrix{
\CoMod(K) \ar[rr]^F  \ar[d]^{G'}
&&\Mod(H)  \ar[d]^{G}\\ 
\LieCoMod(Q(K)) \ar[rr]^{F'}  
&& \LieMod(P(H)) 
}
\end{equation}
\begin{theorem}\thlabel{TakMichModules}
Let $(L,K,\dual)$ be a Takeuchi pair, then the diagram of functors \equref{MichTakModule} commutes.
\end{theorem}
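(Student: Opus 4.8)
The plan is to exploit that each of the four functors in \equref{MichTakModule} is the identity on underlying objects and on morphisms, altering only the relevant (co)module structure map. Indeed, $F$ uses the pairing $\dual$ to re-coordinatise a comodule as a module, $G$ and $G'$ are built from the induction functor of \conref{commutator} (which keeps the same structure map) followed by (co)restriction along $\eq$ resp.\ $\coeq$, and $F'$ is the functor of \prref{ModuleFunctorMichPair} for the Michaelis pair $(P(H),Q(K),\ev)$. Since all of them send a morphism $f$ to the same underlying $f$, the two composites $G\circ F$ and $F'\circ G'$ automatically agree on morphisms, and commutativity of the square reduces to a single object-level identity: for an arbitrary left $K$-comodule $(X,\rho^X)$ with $\rho^X\colon X\to K\ot X$, the two resulting left $P(H)$-Lie module structures on $X$ must coincide.

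First I would trace the clockwise path $G\circ F$. The functor $F$ endows $X$ with the $H$-action $\rho^F_X=(\dual\ot X)\circ(H\ot\rho^X)$, and $G$ regards this as a $\Ll(H)$-Lie module (with Lie action equal to $\rho^F_X$) and then restricts along the YB-Lie algebra morphism $\eq\colon P(H)\to H$. Using the interchange law to slide $\eq$ past $\rho^X$, the induced $P(H)$-Lie action becomes
\begin{align*}
\liea^{GF}_X&=(\dual\ot X)\circ(H\ot\rho^X)\circ(\eq\ot X)\\
&=\bigl((\dual\circ(\eq\ot K))\ot X\bigr)\circ(P(H)\ot\rho^X).
\end{align*}

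Next I would trace the counterclockwise path $F'\circ G'$. Dually to induction, $G'$ sends $(X,\rho^X)$ to the $\Ll^c(K)$-Lie comodule with the same coaction and corestricts along $\coeq\colon K\to Q(K)$, yielding the $Q(K)$-Lie coaction $\liec^X=(\coeq\ot X)\circ\rho^X$. Feeding this into $F'$ of \prref{ModuleFunctorMichPair} produces
\begin{align*}
\liea^{F'G'}_X&=(\ev\ot X)\circ(P(H)\ot\coeq\ot X)\circ(P(H)\ot\rho^X)\\
&=\bigl((\ev\circ(P(H)\ot\coeq))\ot X\bigr)\circ(P(H)\ot\rho^X).
\end{align*}

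Finally I would invoke the defining relation \equref{nodighieronder} of the evaluation, namely $\ev\circ(P(H)\ot\coeq)=\dual\circ(\eq\ot K)$, to conclude that $\liea^{GF}_X=\liea^{F'G'}_X$, whence $G\circ F=F'\circ G'$ and the square commutes. I do not expect a genuine obstacle: the entire mathematical content is already condensed into \equref{nodighieronder}, and the remainder is the bookkeeping of unwinding each functor's effect on the structure maps together with one application of the interchange law. The only point demanding care is keeping the left/right (co)module conventions consistent across $F$, $G$, $G'$ and $F'$, and suppressing the unit constraints $I\ot X\cong X$ coherently throughout.
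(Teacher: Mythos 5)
Your proposal is correct and follows essentially the same route as the paper: both reduce the commutativity of the square to comparing the two induced $P(H)$-Lie actions on a fixed $K$-comodule, and both close the argument by naturality of the tensor product together with the defining relation $\ev\circ(P(H)\ot\coeq)=\dual\circ(\eq\ot K)$ from \equref{nodighieronder}.
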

\begin{proof}
Consider a $K$-comodule $(M,\rho^{M,K})$. Then we have $F(M,\rho^{M,K})=(M,\rho_{M,H})$, where
$$\xymatrix{\rho_{M,H}:H\ot M\ar[rr]^-{H\ot \rho^{M,K}} && H\ot K\ot M\ar[rr]^-{\dual\ot M} && M}$$
Next, we find $GF(M,\rho^{M,K})=G(M,\rho_{M,H})=(M,\liea_{M,P(H)})$, with
$$\xymatrix{\liea_{M,P(H)}: P(M)\ot M \ar[rr]^-{\eq\ot M} && H\ot M \ar[rr]^-{\rho_{M,H}} && M}$$
On the other hand, we obtain $G'(M,\rho^{M,K})$, where
$$\xymatrix{\liec^{M,Q(K)}:M \ar[rr]^-{\rho^{M,K}} && K\ot M \ar[rr]^-{\coeq \ot M} && Q(K)\ot M}$$
We continue and compute $F'G'(M,\rho^{M,K})=F'(M,\liec^{M,Q(K)})=(M,\liea'_{M,P(H)})$ given by
$$\xymatrix{\liea'_{M,P(H)}: P(H)\ot M \ar[rr]^-{P(H)\ot \liec^{M,Q(K)}} && P(H)\ot Q(K)\ot M \ar[rr]^-{\ev\ot M} && M }$$
Finally, to see that $F'G'=GF$ it suffices to verify that $\liea'_{M,P(H)}=\liea_{M,P(H)}$, i.e.
\begin{eqnarray*}
\liea'_{M,P(H)}&=& (\ev\ot M)\circ (P(H)\ot \coeq\ot M)\circ (P(H)\ot \rho^{M,K})\\
&=&(\dual\ot M)\circ (\eq\ot K\ot M)\circ (P(H)\ot \rho^{M,K})\\
&=&(\dual\ot M)\circ (H\ot \rho^{M,K})\circ (\eq\ot M) = 
\liea_{M,P(H)} 
\end{eqnarray*}
This finishes the proof.
\end{proof}

\subsection*{Acknowledgement}
We would like to thank Alessandro Ardizzoni for useful comments on an earlier version of this paper.

\end{document}